\newtheorem{theorem}{Theorem}[section]
\newtheorem{lemma}[theorem]{Lemma}
\newtheorem{corollary}[theorem]{Corollary}
\newtheorem{proposition}[theorem]{Proposition}
\theoremstyle{definition}
\newtheorem{definition}{Definition}
\theoremstyle{remark}
\numberwithin{equation}{section}
\newcommand{\B}{\mathcal{B}}
\newcommand{\K}{\mathrm{K}}
\newcommand{\G}{\mathrm{G}}
\newcommand{\PP}{\mathcal{P}}
\newcommand{\C}{{\mathbb C}}
\newcommand{\R}{{\mathbb R}}
\newcommand{\Z}{{\mathbb Z}}
\newcommand{\1}{{\mathbf 1}}
\newcommand{\eps}{\varepsilon}
\newcommand{\SL}{\operatorname{SL}}
\renewcommand{\Re}{\operatorname{Re}}
\renewcommand{\Im}{\operatorname{Im}}
\renewcommand{\exp}{\operatorname{exp}}
\renewcommand{\H}{\mathcal{H}}
\renewcommand{\a}{\mathrm{a}}
\newcommand{\n}{\mathrm{n}}
\renewcommand{\k}{\mathrm{k}}
\newcommand{\g}{\mathrm{g}}
\newcommand{\h}{\mathrm{h}}
\renewcommand{\c}{\mathfrak c}
\renewcommand{\d}{\mathrm d}
\renewcommand{\n}{\mathrm{n}}
\newcommand{\sgn}{\operatorname{sgn}}
\renewcommand{\pmod}[1]{\,\,(\mathrm{mod}\,{#1})}
\title{Weighted averages of $\SL_2(\R)$ automorphic kernel \\ Part I: non-oscillatory functions}
\author{Lasse Grimmelt}
\address{Mathematical Institute, University of Oxford\\ Radcliffe Observatory Quarter, Woodstock Rd\\
Oxford OX2 6GG\\
UK}
\email{lasse.grimmelt@maths.ox.ac.uk}
\author{Jori Merikoski}
\address{Mathematical Institute, University of Oxford\\ Radcliffe Observatory Quarter, Woodstock Rd\\
Oxford OX2 6GG\\
UK}
\email{jori.merikoski@maths.ox.ac.uk}
\date{}
\subjclass[2020]{11F72, 11N75 primary}
\begin{document}
\begin{abstract}
We prove a theorem that evaluates weighted averages of sums parametrised by congruence subgroups of $\SL_2(\Z)$. In the proof, spectral methods are applied directly to the automorphic kernel instead of going over sums of Kloosterman sums. In number theoretical applications this better preserves the specific symmetries throughout the application of spectral methods.  In a separate paper we apply the main theorem to quadratic polynomials and obtain new results about their greatest prime factor and the equidistribution of their roots to prime moduli. 
\end{abstract}

\maketitle

\tableofcontents
\section{Introduction}

The determinant equation $ad-bc=h$ is ubiquitous in number theory and often we wish to count its integer solutions in dyadic ranges and subject to congruence restrictions. The go-to approach has been to apply Poisson summation and show cancellation along the resulting  \emph{sums of Kloosterman sums}. The \emph{Kuznetsov trace formula} is the crucial ingredient in this approach. The seminal work of Deshouillers and Iwaniec \cite{deshoullieriwaniec} has since produced countless applications (cf. for instance, \cite{BFI,motohashi,petrowyoung,young}), colloquially referred to as the \emph{Kloostermania}. 

In \cite{GM}, the authors developed an alternative approach that directly applies spectral theory to a Poincaré type series. In this paper, we advance this perspective further and interpret the counting problem as a weighted average of a two-variable $\SL_2(\R)$ automorphic kernel function. In contrast to \cite{GM} that still relied on one application of the spectral large sieve, this allows us to sidestep the spectral large sieve and with it Kuznetsov's formula and Kloosterman sums completely. This paper is the part I of a two-part series, in the part II we will generalise to oscillatory weight functions.

Our main theorem inherits all benefits of the results in \cite{GM}, for example, it is easier to apply compared to the classical Kloosterman sum approach in cases which would require delicate considerations about cusps and scaling matrices. Furthermore, it has a new advantage when averaging over the level variable is available, as it fully preserves existing divisor switching symmetry. See Section \ref{sec:applications} for announcement of applications to quadratic polynomials. The proofs of these applications are based on Theorem \ref{thm:kinvtechnical} and appear in the companion paper \cite{GMquadratic}. In Section \ref{sec:examples} we give further simple motivational examples.

We now introduce some notation for the statement of main theorem. Let $\Gamma \subseteq \SL_2(\Z)$ be a congruence subgroup of level $q \geq 1$ and  fix a group character $\chi( \smqty(a & b \\ c& d)) = \chi(d)$ for $\chi$ a Dirichlet character modulo $q.$ We let $\Gamma$  act on the Lie group $\G := \SL_2(\R)$ by multiplication from the left $\g \mapsto \gamma \g$.  Given a compactly supported function $F:\G \to \C$, we construct the \emph{automorphic kernel function} $\mathcal{K}_{\Gamma,\chi}F:\G \times \G \to \C$ by
\begin{align*}
      ( \mathcal{K}_{\Gamma,\chi} F)(\tau_1,\tau_2)&:=\sum_{\gamma \in \Gamma} \overline{\chi}(\gamma) F(\tau_1^{-1} \gamma \tau_2).
\end{align*}
We shall write $\mathcal{K}=\mathcal{K}_{\Gamma,\chi}$ when  $\Gamma$ and $\chi$ are clear from the context.

If $F$ is a smooth bump function and its support is large  and not too skewed compared to the level $q$, we expect that the sum over $\Gamma$ is well approximated by the corresponding integral over $\G$ against the Haar measure $\d \g$, normalised by the volume of the fundamental domain $\Gamma \backslash \G$. This leads us to define the discrepancy $\Delta=\Delta_{\Gamma,\chi}$
\begin{align*}
  \Delta F (\tau_1,\tau_2)  &:=    \mathcal{K}F(\tau_1,\tau_2) - \frac{\mathbf{1}_{\chi \, \mathrm{principal}}}{|\Gamma \backslash \G|} \int_\G F( \g ) \d \g.
\end{align*}
The main goal is to show that this discrepancy is small, at least in some average sense. 

\begin{definition}[Weighted average discrepancy]
   For a compactly supported $F:\G \to \C$ and finitely supported weights $\alpha_1,\alpha_2: \G \to \C$ we define the \emph{weighted average discrepancy}
   \begin{align*}
       \langle \alpha_1 | \Delta F | \alpha_2 \rangle = \sum_{\tau_1, \tau_2} \overline{\alpha_1(\tau_1)} \alpha_2(\tau_2) \Delta F(\tau_1,\tau_2). 
   \end{align*}
\end{definition}
We also require a definition that encapsulates the notion of a smooth dyadic bump function.
\begin{definition}[Smooth dyadically supported function]\label{def:SandS}
  For $n,J \in \Z_{>0}$, $\delta > 0,$ and $X_1,\dots,X_n \in \R_{>0}$  define the space of $J$ times differentiable dyadically supported functions. 
\begin{align*}
    C_\delta^J (X_1,\dots,X_n) :=\big\{f\in& C^J(\R^n): \, f(x_1,\dots,x_n) \,\, \text{supported on} \,  \\
    &(|x_1|,\dots,|x_n|) \in [X_1,2X_1] \times \cdots \times [X_n,2X_n], \\
    &  \|\partial_{x_1}^{J_1} \cdots \partial_{x_n}^{J_n} f\|_\infty \leq  \prod_{i \leq n} (\delta  X_i)^{-J_i} \quad \text{for all} \quad 0 \leq J_1+\cdots+J_n  \leq J  \big\}.
\end{align*}
\end{definition}

 We define  for any $R > 0$  and  $\g = \smqty(a &b \\ c& d) \in \G$ the $R$-skewed  hyperbolic size (cf. \eqref{eq:ulowerbound} also)
\begin{align*}
    u_R(\g)  := \tfrac{1}{4} (a^2+(b/R)^2+(cR)^2+d^2-2).
\end{align*}
For any statement $S$ we let $\mathbf{1}\{S\}$ denote the indicator function that $S$ holds. We denote  $\theta=\theta(\Gamma,\chi) := \max\{0,\Re(\sqrt{1/4-\lambda_1(\Gamma,\chi)})\}$ with $\lambda_1(\Gamma,\chi)$ denoting the smallest positive eigenvalue of the hyperbolic Laplacian for $\Gamma$ with character $\chi$. The \emph{Selberg eigenvalue conjecture} states that
 we have $\theta=0$ and the best unconditional bound for congruence subgroups is $\theta \leq 7/64$ by Kim and Sarnak \cite{KimSarnak}.  

 The most technical version of our main result is given in Theorem \ref{thm:technical}, for now we state the following special case.
\begin{theorem} \label{thm:mainblackbox}
Let $\Gamma \subseteq \SL_2(\Z)$ be a congruence subgroup with a group character $\chi$. Let $\alpha_1,\alpha_2 \in \mathcal{L}_c(\G)$. Let $A,C,D,\delta > 0$  with $AD > \delta$ and let $f \in C^{10}_\delta (A,C,D)$. Denote $F:\G \to \C, \, F(\smqty(a & b \\ c& d)):= f(a,c,d)$ and denote $R_1=A/C, R_2=D/C$. Then for any choice $X_0X_1X_2 \geq AD $ with $X_0,X_1,X_2 \geq 1$ we have
\begin{align*}
 \langle \alpha_1 |\Delta F| \alpha_2\rangle  \ll \delta^{-O(1)} (AD)^{1/2+o(1)}X_0^{\theta} \sqrt{ \, \langle  \alpha_1| \Delta k_{X_1^2,R_1}| \alpha_1\rangle \, \langle  \alpha_2| \Delta k_{X_2^2,R_2}| \alpha_2 \rangle},
\end{align*}
where $\langle \alpha_i|\Delta k_{X_i^2,R_i}| \alpha_i\rangle \geq 0$ for certain smooth functions $k_{Y,R} :\G \to [0,1]$ which satisfy
\begin{align*}
    k_{Y,R}(\g)  \leq  \frac{\mathbf{1}\{u_R(\g) \leq Y\}}{\sqrt{1+u_R(\g)}}.
\end{align*}
\end{theorem}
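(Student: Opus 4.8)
The plan is to run a spectral argument directly on the automorphic kernel, using Cauchy--Schwarz in the spectral variable to decouple $\alpha_1$ from $\alpha_2$ and the positivity of carefully chosen point-pair invariants to recognise the two resulting factors as the asserted diagonal discrepancies. Throughout, the hypothesis $f \in C^{10}_\delta(A,C,D)$ --- that $F$ is a genuine non-oscillatory bump with controlled derivatives --- is what lets us bound the relevant archimedean transforms by repeated integration by parts, at the cost of the factors $\delta^{-O(1)}(AD)^{o(1)}$.

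First I would remove the skew and equalise the support. Conjugating $\G$ on each side by diagonal matrices $\smqty(r_i & 0 \\ 0 & r_i^{-1})$ with $r_1 \asymp \sqrt{C/A}$ and $r_2 \asymp \sqrt{D/C}$ turns $F(\smqty(a & b \\ c & d)) = f(a,c,d)$, supported where $(|a|,|c|,|d|) \asymp (A,C,D)$, into a function whose entries are all $\asymp \sqrt{AD}$, hence which is concentrated, in the hyperbolic metric, on a ball-like region of radius $\asymp \log(AD)$; the Jacobian is harmless (here $AD > \delta$ is used) and the skew is recorded precisely in the parameters $R_1 = A/C$ and $R_2 = D/C$ attached to the test functions, through the shape of $u_R$.

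Next I would decompose $\langle \alpha_1 | \mathcal{K} F | \alpha_2 \rangle$ along the spectral decomposition of $L^2(\Gamma\backslash\G,\chi)$. Since $F$ is not bi-$\K$-invariant one must retain all $\K$-types, but within each automorphic representation $\pi_j$ (spectral parameter that of a Maass form $u_j$, together with the Eisenstein contribution) the kernel factorises as $\widehat{F}_j\, \overline{\langle\alpha_1|\cdot\rangle}\,\langle\alpha_2|\cdot\rangle$ summed over an adapted orthonormal basis, where $\widehat{F}_j$ is the operator $F$ induces on the representation space; subtracting off the trivial representation is exactly the passage from $\mathcal{K}$ to $\Delta$. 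The analytic heart is a uniform majorant
\begin{align*}
|\widehat{F}_j| \ll \delta^{-O(1)} (AD)^{1/2+o(1)}\, X_0^{\theta_j}\, \widehat{k}^{(1)}_j\, \widehat{k}^{(2)}_j,
\end{align*}
valid for every $j$ and every $\K$-type, where $\theta_j \in [0,\theta]$ vanishes off the exceptional spectrum, $\widehat{k}^{(i)}_j \geq 0$ is the Selberg transform of $k_{X_i^2,R_i}$ at the spectral parameter of $u_j$, and the freedom $X_0 X_1 X_2 \geq AD$ distributes the logarithmic size of the support of $F$ among the three transforms, with the slack $X_0$ left to absorb the exceptional growth of $F$ not already carried by $\widehat{k}^{(1)}_j\widehat{k}^{(2)}_j$. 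Building $k_{Y,R}$ as a self-convolution $\psi_{Y,R} \star \psi_{Y,R}^{*}$ of a skewed bump adapted to the region $\{u_R \leq Y\}$ makes $\widehat{k}^{(i)}_j = |\widehat{\psi}^{(i)}_j|^2 \geq 0$ automatic and, from the standard size of a spherical function on the diagonal, simultaneously yields the pointwise bound $k_{Y,R}(\g) \leq \mathbf{1}\{u_R(\g)\leq Y\}(1+u_R(\g))^{-1/2}$.

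Finally, bounding $X_0^{\theta_j} \leq X_0^{\theta}$ and applying Cauchy--Schwarz in $j$ over the discrete and continuous spectrum gives
\begin{align*}
|\langle \alpha_1 | \Delta F | \alpha_2 \rangle| \ll \delta^{-O(1)}(AD)^{1/2+o(1)} X_0^{\theta} \Big(\sum_j \widehat{k}^{(1)}_j |\langle\alpha_1|u_j\rangle|^2\Big)^{1/2} \Big(\sum_j \widehat{k}^{(2)}_j |\langle\alpha_2|u_j\rangle|^2\Big)^{1/2},
\end{align*}
and each factor on the right is exactly the spectral expansion of $\langle\alpha_i|\Delta k_{X_i^2,R_i}|\alpha_i\rangle$, which is therefore non-negative. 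The main obstacle is the uniform majorisation step: controlling the operator $\widehat{F}_j$ simultaneously across all $\K$-types and all $j$ --- especially through the exceptional spectrum, which is what produces the $X_0^{\theta}$ --- while keeping the estimate in the clean product form $\widehat{k}^{(1)}_j \widehat{k}^{(2)}_j$. This is precisely what forces the detour through point-pair invariants and the use of non-oscillation, and the bookkeeping in full generality is the content of Theorem \ref{thm:technical}, of which the present statement is the announced special case.
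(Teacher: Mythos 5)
Your overall skeleton matches the paper's: unskewing by diagonal conjugation to reduce to the balanced case, spectral expansion of the automorphic kernel keeping all $\K$-types, Cauchy--Schwarz in the spectral parameters, self-convolved kernels to force spectral non-negativity together with the pointwise bound $\mathbf{1}\{u_R\leq Y\}(1+u_R)^{-1/2}$, and the slack $X_0$ in $X_0X_1X_2\geq AD$ absorbing the exceptional growth so that only $X_0^{\theta}$ survives. However, the step you yourself flag as the ``analytic heart'' is a genuine gap, and in the form you state it, it is not salvageable as written: you posit a single pointwise majorization $|\widehat F_j|\ll\delta^{-O(1)}(AD)^{1/2+o(1)}X_0^{\theta_j}\,\widehat k^{(1)}_j\widehat k^{(2)}_j$ valid for \emph{every} spectral parameter and \emph{every} pair of $\K$-types, with one fixed pair of self-convolution kernels. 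With only $f\in C^{10}_\delta$, the transform $\langle F,\mathcal{P}^{(\ell_1,\ell_2)}_\nu\rangle_\G$ is controlled merely by polynomial decay of exponent tied to $J=10$ (Proposition \ref{prop:decayspec}), whereas the transforms $\widehat k^{(i)}_j=|\widehat\psi^{(i)}_j|^2$ of fixed smooth compactly supported bumps can vanish and in general decay faster; moreover the needed decay of the product in $|\nu|$ is twice that of a single factor, so one cannot deduce such a pointwise domination from the available upper bound on $\widehat F_j$. On the exceptional spectrum the inequality additionally requires uniform \emph{lower} bounds $\widehat k^{(i)}_j\gg X_i^{2\nu_V}$ up to controlled powers of the $\K$-type, which is exactly the delicate content of Lemma \ref{lem:simplephiasymp} and Proposition \ref{prop:exceptionalboundbykernel} (note the constraint $Z^{\nu}>c_0L^6$ there), not a consequence of ``the standard size of a spherical function.'' Deferring this to Theorem \ref{thm:technical} is circular, since that is the paper's general theorem of which the present statement is the special case.

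The paper avoids needing any pointwise product majorant of $\widehat F$. After the spectral expansion it uses only the \emph{decay} of $\langle F,\mathcal{P}^{(\ell_1,\ell_2)}_\nu\rangle_\G$ in $(\nu,\ell_1,\ell_2)$, writes $(AD)^{\widetilde\nu}\leq X_0^{\theta}X_1^{\widetilde\nu}X_2^{\widetilde\nu}$, applies Cauchy--Schwarz, and then dominates the resulting truncated spectral second moments --- not $\widehat F$ itself --- by quadratic forms in the physical kernels: Proposition \ref{prop:regularboundbykernel} gives $T^2L\,\langle\alpha|\Delta k^{\mathrm{reg}}_{1/T}|\alpha\rangle$ for the ranges $|\nu|\leq T$, $|\ell|\leq L$, Proposition \ref{prop:exceptionalboundbykernel} gives $L^4\langle\alpha|\Delta k^{\mathrm{exc}}_{X_j^2}|\alpha\rangle$ carrying the weight $X_j^{2\nu_V}$, and the final kernel $k_{X_j^2,R_j}$ is a dyadic sum over $T,L$ whose convergence uses precisely the $J=10$ derivatives. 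So your proposal is the right strategy in outline, but the central uniform majorization is missing and would need to be reorganized along these lines (decay plus dyadic truncation plus per-range majorant kernels) rather than proved as a single spectral-point inequality.
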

Theorem \ref{thm:mainblackbox} starts with an automorphic kernel and, after passing through a spectral expansion and Cauchy-Schwarz, requires the estimation of two new kernel functions on the right-hand side. On the one hand, corresponding to the \emph{diagonal contribution} of the Cauchy-Schwarz, the kernel $k_{X_i^2,R_i}$ have concentrated mass $\asymp 1$ in the skewed ball $|a|,|d| \leq 1, |b| \leq R_i, |c| \leq 1/R_i$. Outside this ball, the kernel decay due to $1/\sqrt{1+u_{R_i}(\g)}$. On the other hand, corresponding to the \emph{off-diagonal} contribution, the limiting case in the support occurs when $|a|,|d| \asymp X_i$, $|b| \asymp R_i X_i,$ $|c| \asymp R_i^{-1} X_i$. There the new kernel $k_{X_i^2,R_i}$ are supported around $(A_i,C_i,D_i)$ which satisfy $ A_i D_i = X_i^2, \frac{A_i}{C_i} = \frac{D_i}{C_i} = R_i $, analogously to the original function $F$. Therefore, if we consider $\alpha_1=\alpha_2$, $R_1=R_2,$ $X_0=1$, and $X_1=X_2= \sqrt{AD}$, then the statement of Theorem 1.1 is trivial. In applications, we win over the trivial bound by taking $X_0 > (AD)^\eps$, which exploits the existence of a spectral gap $\theta < 1/2$.  

The statement has other symmetries, for instance, as a corollary we see that the it stays true if we replace $\Gamma$  by the conjugate $\a^{-1} \Gamma \a$ for any diagonal matrix $\a \in \G$.

\subsection{Applications to quadratic polynomials} \label{sec:applications}
In the companion paper \cite{GMquadratic}, we consider quadratic polynomials of the shape $an^2+h$ and obtain new results about their greatest prime factor, the equidistribution of their roots to prime moduli, and a certain divisor problem which is used in the recent work of the second author \cite{mx2y3}. 

With respect to the greatest prime factor, we obtain the following theorem that improves upon \cite{mlargepf,pascadi2024large} in two aspects. First, we unconditionally obtain the exponent 1.312 that previously was only available under Selberg's eigenvalue conjecture.  Second, we get uniformity in the shift $h$ under an assumption involving the Legendre symbol $(\tfrac{x}{p})$. To state the assumption, we define
\begin{align*}
  \varrho_{a,h}(k):= \#\{ \nu \in \Z/k\Z: a\nu^2+h \equiv 0 \pmod{k} \}.
\end{align*}
This is a multiplicative function with $ \varrho_{a,h}(p) = 1 + (\frac{-ah}{p})$ for $\gcd(a,p)=1$. In contrast to all previous works, the proof is independent of progress towards the Selberg eigenvalue conjecture -- any fixed spectral gap would give the same quality of result.
\begin{theorem}\label{thm:primefactor}
There exists some small $\eps >0$ such that the following holds for all $X > \eps^{-1}$. Let $1 \leq h\leq X^{1+\eps}$ be square-free and let $1 \leq a\leq X^{\eps}$ with $\gcd(a,h)=1$. Suppose that for any $X^{\eps}<  Y < Z \leq X^2$ we have
\begin{align*}
  \sum_{Y \leq p < Z} \frac{\log p}{p} \varrho_{a,h}(p)\leq (1+\eps) \log Z/Y + 1/\eps.
\end{align*}
 Then there exists $n\in [X,2X]$ such that the greatest prime factor of $a n^2+h$ is greater than $X^{1.312}$.  
\end{theorem}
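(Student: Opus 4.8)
The plan is to deduce Theorem \ref{thm:primefactor} from the circle-of-ideas around Erd\H{o}s--Hooley's $\Delta$-function and the classical greatest-prime-factor machinery, with the arithmetic input to the relevant bilinear sums supplied by the companion paper's application of Theorem \ref{thm:kinvtechnical}. Concretely, write $P(n)$ for the greatest prime factor and suppose for contradiction that $P(an^2+h) \le X^{1.312}$ for all $n \in [X,2X]$. The standard device is to count, with weights $\log p$, the pairs $(n,p)$ with $p \mid an^2+h$, $p$ in various dyadic ranges $[Y,2Y]$, and compare the total mass to $\sum_{n\in[X,2X]}\log(an^2+h) \sim 2X\log X$. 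For small primes (say $p \le X^{2-c}$) one has an asymptotic for $\sum_{n\in[X,2X]} \mathbf 1(p\mid an^2+h) = \varrho_{a,h}(p)\frac{X}{p} + O(1)$, which, after summing $\frac{\log p}{p}\varrho_{a,h}(p)$, is controlled by the hypothesis on $\sum_{Y\le p<Z}\frac{\log p}{p}\varrho_{a,h}(p)\le (1+\eps)\log(Z/Y)+1/\eps$; this is exactly what lets us replace the usual ``$\varrho_{a,h}(p)\approx 1$ on average'' heuristic by a genuine upper bound uniform in $h$. The crux is the range of primes near $X^{1.312}$ up to $X^2$: there, pointwise counting of solutions fails and one must detect $p\mid an^2+h$ via a \emph{bilinear / Type-II} decomposition, writing $an^2+h = pm$ and counting solutions of $an^2 + h \equiv 0 \pmod p$ by opening into the determinant equation $ad-bc=h$ and applying the automorphic-kernel estimate.

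The key steps, in order: (i) Set up the weighted prime-factor count and reduce, via Buchstab/Rankin-type combinatorial identities (as in \cite{mlargepf,pascadi2024large}), the claim ``$P(an^2+h)\le X^{1.312}$ for all $n$'' to an inequality of the form $\sum_{\text{dyadic }p} (\log p)\,\mathcal S(p) < 2X\log X$ being violated, where $\mathcal S(p)=\#\{n\in[X,2X]: p\mid an^2+h\}$. (ii) Split the $p$-sum at thresholds determined by the exponent $1.312$; on $[X^\eps, X^{2-c}]$ insert the pointwise asymptotic for $\mathcal S(p)$ and bound the resulting sum using the Legendre-symbol hypothesis, gaining exactly the room encoded in $1+\eps$. (iii) For $p$ in the hard range, parametrise solutions $\nu$ of $a\nu^2+h\equiv 0\pmod p$ by a lattice / by the equation $ad-bc=h$ with $(c,d)$ ranging over appropriate dyadic boxes, so that $\mathcal S(p)$ summed against smooth weights becomes a weighted average discrepancy $\langle \alpha_1|\Delta F|\alpha_2\rangle$ for a suitable non-oscillatory $f\in C^{10}_\delta(A,C,D)$. (iv) Apply Theorem \ref{thm:mainblackbox} (in the sharper form Theorem \ref{thm:kinvtechnical} from the companion paper), choosing $X_0$ as large as permitted — here is where \emph{any} fixed spectral gap $\theta<1/2$ suffices, since the saving is $X_0^{\theta-1/2+o(1)}$ relative to trivial and we only need $X_0$ a small power of $X$ — and bound the two diagonal kernels $\langle \alpha_i|\Delta k_{X_i^2,R_i}|\alpha_i\rangle$ by elementary lattice-point / divisor estimates using $k_{Y,R}(\g)\le \mathbf 1\{u_R(\g)\le Y\}/\sqrt{1+u_R(\g)}$. (v) Combine: the main term from (ii) together with the error savings from (iv) contradicts the lower bound $2X\log X$, forcing some $n\in[X,2X]$ with $P(an^2+h)>X^{1.312}$.

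I expect the main obstacle to be step (iii)--(iv): arranging the bilinear sum so that it is genuinely of the form handled by Theorem \ref{thm:mainblackbox} with the \emph{right} parameters $A,C,D,R_1,R_2$ and admissible weights $\alpha_1,\alpha_2\in\mathcal L_c(\G)$, and then verifying that the permissible dyadic ranges of $p$ — after the combinatorial reduction in (i) — are wide enough to reach the exponent $1.312$ rather than something weaker. Getting $1.312$ unconditionally (previously conditional on Selberg) hinges on the fact that the new method decouples the quality of the exponent from $\theta$: the loss is confined to a factor $X_0^\theta$ that we can always absorb by choosing $X_0=X^{\eps'}$, so the optimisation over Buchstab iterations is governed purely by the size of the accessible ranges, not by $7/64$ versus $0$. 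A secondary technical point is maintaining uniformity in $h$ up to $h\le X^{1+\eps}$: this requires that the discrepancy bound, and in particular the diagonal kernel estimates, be uniform in the shift of the determinant equation $ad-bc=h$, which the skewing parameters $R_1,R_2$ are designed to accommodate but which must be tracked carefully through the divisor-switching symmetry the authors emphasise.
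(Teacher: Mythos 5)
There is a genuine gap, and also a structural point you should be aware of: this paper does not actually prove Theorem \ref{thm:primefactor}. It is announced as an application, and the paper states explicitly that the proofs are based on Theorem \ref{thm:kinvtechnical} and appear in the companion paper \cite{GMquadratic}. So your outline can only be measured against the announced strategy, and at that level it is consistent: a Chebyshev--Hooley weighted count of prime factors, pointwise treatment of small primes using the hypothesis on $\varrho_{a,h}(p)$, and a bilinear treatment of the critical range via the weighted-average discrepancy with level averaging over $p$, which is exactly the feature (carrying $\alpha_1,\alpha_2$ through Cauchy--Schwarz, preserving divisor-switching symmetry) that makes the result independent of the size of $\theta$.

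However, as a proof your proposal is essentially empty at the points where the theorem actually lives. First, the exponent $1.312$ does not come out of a generic ``Buchstab/Rankin-type'' reduction plus ``error savings''; it is the output of a delicate numerical optimisation (as in \cite{mlargepf,pascadi2024large}) in which one only has asymptotics or one-sided bounds in restricted ranges of $p$, and your accounting in steps (i), (ii) and (v) never exhibits the inequalities whose optimisation yields that constant --- with the naive bookkeeping you describe, the medium-prime contribution is simply not controlled. Second, step (iii) hides the main arithmetic work: passing from $\#\{n\sim X: p\mid an^2+h\}$, summed over $p$ in the critical range, to a form $\langle\alpha_1|\Delta F|\alpha_2\rangle$ (in fact to the Hecke-twisted form $\sum_h\beta(h)\langle\alpha_1|\mathcal{T}_{h,1}\Delta F|\alpha_{2,h}\rangle$ of Theorem \ref{thm:kinvtechnical}) requires the explicit parametrisation of roots of $a\nu^2+h\equiv 0\pmod p$ by the determinant equation / $\SL_2$ cosets, a choice of congruence subgroup and weights $\alpha_1,\alpha_{2,h}\in\mathcal{L}_c(\G)$ uniform in $h\leq X^{1+\eps}$, and kernel estimates $\langle\alpha_i|\Delta k|\alpha_i\rangle$ that exploit the averaging over the level $p$; none of this is specified, and it is precisely here that uniformity in $h$ and the role of the $\varrho_{a,h}$ hypothesis must be tracked. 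Finally, your explanation of the $\theta$-independence (``absorb $X_0^{\theta}$ by taking $X_0=X^{\eps'}$'') misstates the mechanism: the gain in the main theorem comes from taking $X_0$ \emph{large} (paying $X_0^{\theta}$ while shrinking $X_1X_2$, a net win only because $\theta<1/2$), and the insensitivity to the numerical value of $\theta$ in the application comes from the level averaging in the diagonal kernels, not from making $X_0$ small. In short, the route is the right one, but the proposal defers all the quantitative content that the companion paper supplies.
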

For $ah \leq X^{\eps^2}$ the hypothesis can be shown unconditionally by the zero-free region, since a possible Siegel zeros would help to make the sum small. It seems likely that this hypothesis is a fundamental obstruction, as its contraposition means that the $an^2+h$ have more  small prime factors than expected. 
 
In similar spirit, we can also obtain uniformity in $h$ for the equidistribution of roots of quadratic congruences to prime moduli, generalising the work of Duke, Friedlander, and Iwaniec \cite{DFIprimes}. We again need to include the distribution of $\varrho_{a,h}(p) $ in the statement. However, here the issue is of opposite nature and the theorem becomes trivial if $\varrho_{a,h}(p)$ is $0$ too often, i.e. if there is a Siegel zero for the real character $(\frac{-ah}{\cdot})$. 
\begin{theorem}\label{thm:roots}
    Let $1 \leq h\leq X^{1+o(1)}$ be square-free and let $1 \leq a\leq X^{o(1)}$ with $\gcd(a,h)=1$. Then for any $0\leq \alpha <\beta \leq 1$ we have
    \begin{align*}
   \# \Bigl\{(p,\nu): p\leq X, \, a\nu^2+h\equiv 0 \pmod{p}, \, \frac{\nu}{p} \in ( \alpha, \beta] \Bigr\} =  (\beta-\alpha) \sum_{p \leq X} \varrho_{a,h}(p) + o\bigl(\pi(X)\bigr) .
    \end{align*}
\end{theorem}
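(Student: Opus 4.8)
The plan is to detect the condition $\nu/p \in (\alpha,\beta]$ by Weyl's equidistribution criterion, so it suffices to show that for every fixed nonzero integer $m$ we have
\begin{align*}
\sum_{p \leq X} \sum_{\substack{a\nu^2 + h \equiv 0 \, (p) \\ 0 \leq \nu < p}} e\Bigl(m\,\frac{\nu}{p}\Bigr) = o\bigl(\pi(X)\bigr),
\end{align*}
uniformly in $h \leq X^{1+o(1)}$ and $a \leq X^{o(1)}$, while the $m = 0$ term reproduces the main term $(\beta-\alpha)\sum_{p\leq X}\varrho_{a,h}(p)$. First I would remove the primality of $p$ by a standard sieve/Vaughan-type decomposition, reducing to bilinear sums $\sum_{c \sim C}\sum_{d \sim D} \gamma_c \delta_d \sum_{\nu} e(m\nu/(cd))$ over the modulus $cd$, together with easily handled "type I" (linear) sums; the Siegel--Walfisz-type input for the main term is routine and the possible exceptional character only affects $\sum_p \varrho_{a,h}(p)$, which is why it appears in the statement. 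The core of the argument is the bilinear estimate.

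For the bilinear sums I would follow Duke--Friedlander--Iwaniec: parametrize the roots $\nu \pmod{c}$ of $a\nu^2 + h \equiv 0$ using that such $\nu$ correspond (via $\nu \leftrightarrow$ the representation $-ah = $ a value of a quadratic form, or equivalently via Gauss's correspondence between roots and binary quadratic forms) to lattice points / matrices of determinant essentially $h$. Concretely, a root $\nu$ of modulus $c$ with $c\,|\,a\nu^2+h$ packages into an integer matrix $\smqty(a' & b \\ c & d)$ with $a'd - bc = h$ (after clearing $a$), and the additive character $e(m\nu/c)$ becomes a character evaluated at the top-left entry. This is exactly the determinant-equation count with congruence restrictions that Theorem \ref{thm:mainblackbox} is built to handle: writing $F(\smqty(a&b\\c&d)) = f(a,c,d)$ for a suitable smooth dyadic $f$ adapted to the ranges $a \asymp h/D$, $c \asymp C$, $d \asymp D$ coming from $p = cd \leq X$ and $h \leq X^{1+o(1)}$, and encoding $\gamma_c,\delta_d$ and the $e(m\nu/c)$ twist into the test weights $\alpha_1,\alpha_2$ and the character $\chi$, the sum becomes $\langle \alpha_1 | \Delta F | \alpha_2\rangle$ up to a main term. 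Applying Theorem \ref{thm:mainblackbox} with the parameter $X_0$ chosen slightly larger than $(AD)^\eps$ then produces a saving over the trivial bound coming purely from the spectral gap $\theta < 1/2$, which is the source of the claimed independence from progress toward Selberg's conjecture.

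The remaining work is to verify that the right-hand side of Theorem \ref{thm:mainblackbox}, namely the diagonal kernels $\langle \alpha_i | \Delta k_{X_i^2,R_i} | \alpha_i\rangle$, is genuinely of the expected size. Here I would open the definition of $k_{Y,R}$, use the pointwise bound $k_{Y,R}(\g) \leq \mathbf{1}\{u_R(\g) \leq Y\}/\sqrt{1+u_R(\g)}$, and estimate the resulting incomplete lattice-point count in the skewed hyperbolic ball by a divisor-type argument — this is where the divisor-switching symmetry preserved by the kernel method (as advertised in the introduction) is essential for getting a clean bound uniform in $h$. I expect the main obstacle to be precisely this uniformity in the shift $h$: one must keep track of the dependence on $\varrho_{a,h}$ of the number of admissible roots (hence of the effective lengths of $\alpha_1,\alpha_2$), and show that the only place where an exceptional/Siegel zero can hurt is the main term $\sum_p \varrho_{a,h}(p)$, matching the shape of the statement. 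Balancing the dyadic ranges $C, D$ with $CD \leq X$ against $A = h/D \leq X^{1+o(1)}/D$ so that the bilinear estimate always beats $\pi(X)/\log X$ for every nonzero $m$ — including the transition ranges where $C \asymp D$ — is the delicate bookkeeping that makes the argument go through.
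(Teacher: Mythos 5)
Two preliminary remarks. First, this paper does not actually contain a proof of Theorem \ref{thm:roots}: as stated in Section \ref{sec:applications}, the proofs of the applications are given in the companion paper \cite{GMquadratic} and are based on Theorem \ref{thm:kinvtechnical}, the right-$\K$-invariant variant with Hecke operators, not on Theorem \ref{thm:mainblackbox} directly. Your general outline — Weyl's criterion, a combinatorial decomposition into bilinear forms, the Duke--Friedlander--Iwaniec correspondence between roots of $a\nu^2+h\equiv 0$ and matrices of determinant (essentially) $h$, and a saving coming solely from the spectral gap $\theta<1/2$ — is the right skeleton and matches the intended strategy in spirit.

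However, the specific reduction you propose has a genuine structural gap. You claim the twist $e(m\nu/c)$ can be "encoded into the test weights $\alpha_1,\alpha_2$ and the character $\chi$". It cannot: $\chi$ is a Dirichlet character evaluated at the lower-right entry of $\gamma\in\Gamma$, so it never produces an additive character, and the functionals $\alpha_1,\alpha_2$ act only on the outer points $\tau_1,\tau_2$, whereas after the CRT/bilinear splitting the phase $e(m\nu/(cd))=e(m\nu_c\bar d/c)\,e(m\nu_d\bar c/d)$ depends jointly on the data of both moduli, i.e.\ on the summed matrix itself. The twist therefore has to be realised through the test function $F$ evaluated at $\tau_1^{-1}\gamma\tau_2$ (as a function of its Iwasawa coordinates, of the form $f(x,y)$ with an $e(mx)$ factor, which for fixed $m$ is still non-oscillatory); this is exactly why the applications rest on the right-$\K$-invariant Theorem \ref{thm:kinvtechnical} rather than on a weight $f(a,c,d)$ on matrix entries. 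If instead you literally put the phase into an entries-weight, say $f(a',c,d)\,e(ma'/c)$, then since $\nu/c$ varies over an interval of length $\asymp D$ the function leaves the class $C^J_\delta$ with $\delta\gg_m 1$, i.e.\ it is an oscillatory weight of the kind this Part I explicitly does not treat. A second omission: the determinant-$h$ structure and the uniformity in $h\le X^{1+o(1)}$ (the main novelty over \cite{DFIprimes}) are handled in the paper's framework via the Hecke operators $\mathcal{T}_{h,1}$ together with the $1/\sqrt h$ normalisation (Theorems \ref{thm:technical} and \ref{thm:kinvtechnical}); your "after clearing $a$" packaging into a congruence-subgroup count does not supply this mechanism, and the decisive quantitative step — that the diagonal kernel bounds beat $\pi(X)$ uniformly in $h$, including the balanced range $C\asymp D$ — is precisely what you defer as "bookkeeping", which is where the content of the companion paper lies.
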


Finally, in \cite{GMquadratic} we show a technical variant of the divisor problem for the binary polynomial $ax^2+by^3$. This was used by the second author \cite{mx2y3} to show the following result about primes of the shape $ax^2+by^3$, conditional to the hypothesis that certain Hecke eigenvalues exhibit square-root cancellation along the values of binary cubic forms.
\begin{theorem}\label{thm:x2y3}   Let $a,b > 0$ be coprime integers. Assume that \cite[Conjecture $\mathrm{C}_a(\eps)$]{mx2y3} holds for all $\eps >0$.  Then 
    \begin{align*}
        \sum_{x \leq X^{1/2}} \sum_{y \leq X^{1/3}} \Lambda(ax^2+by^3) = (1+o(1)) X^{5/6}.
    \end{align*}
\end{theorem}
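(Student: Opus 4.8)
\emph{Proof idea.} The statement is proved in \cite{mx2y3}; the plan here is to recall the structure of that argument and pinpoint where the present work — through the companion paper \cite{GMquadratic} and Theorem~\ref{thm:mainblackbox} — enters. One runs a standard prime-detection scheme for the sparse sequence $\mathcal A = \bigl(\mathbf 1\{n=ax^2+by^3,\ x\le X^{1/2},\ y\le X^{1/3}\}\bigr)_n$: apply a combinatorial identity for $\Lambda$ — Heath-Brown's identity of bounded degree is convenient — to express $\sum_{x,y}\Lambda(ax^2+by^3)$, up to the expected main term, as a bounded number of multilinear sums over factorisations $ax^2+by^3=m_1\cdots m_j$; after dyadic decomposition these split into \emph{Type I} sums, with one long smooth variable and the others short, and \emph{Type II} (bilinear) sums $\sum_{m\sim M}\sum_{n\sim N}\alpha_m\beta_n\,\mathbf 1\{mn=ax^2+by^3\}$ with $MN\asymp X$ and $M,N$ in an intermediate window. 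The main term should equal $(1+o(1))X^{5/6}$: it is the number $X^{1/2}X^{1/3}$ of admissible pairs $(x,y)$ times a singular series equal to $1$, since for $p\nmid ab$ the curve $ax^2+by^3\equiv 0\pmod p$ carries $p+O(\sqrt p)$ points so the local density matches $1-1/p$, the $O(\sqrt p)$ corrections assembling into a convergent factor that is absorbed (a routine local computation).

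The Type I sums reduce, on fixing $y$ and setting $h=by^3$, to bounding $\sum_{d\le D}\beta_d\,\#\{x\le X^{1/2}:\ ax^2\equiv -h\pmod d\}$ with a power-saving error and uniformity in $h$, averaged over the cubes $h=by^3$ with $y\le X^{1/3}$ — that is, to the equidistribution of the roots of the quadratic congruence $ax^2+h\equiv 0\pmod d$. This is exactly the divisor-type estimate for $ax^2+by^3$ proved in \cite{GMquadratic}, whose proof recasts the count over $d$ and over the roots of the congruence as a weighted average $\langle\alpha_1|\Delta F|\alpha_2\rangle$ of an $\SL_2(\Z)$ automorphic kernel and invokes Theorem~\ref{thm:kinvtechnical} (the technical form of Theorem~\ref{thm:mainblackbox}); crucially the saving comes from the spectral gap $\theta<1/2$ alone, so no progress towards the Selberg eigenvalue conjecture is needed. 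This is the step in which the present paper feeds in, and it is unconditional.

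The Type II sums are the crux. Following \cite{mx2y3} one reparametrises the factorisation $mn=ax^2+by^3$, applies reciprocity and completion (Poisson summation) in a suitable variable, and is left with a sum whose oscillation is governed by the Hecke eigenvalues $\lambda_g(\cdot)$ of a fixed cusp form evaluated along the values of a binary cubic form in the new parameters; square-root cancellation in those sums is precisely \cite[Conjecture $\mathrm C_a(\eps)$]{mx2y3}, which is assumed, and it supplies the required saving $MN\cdot X^{-\delta}$ uniformly in the relevant range. Combining the Type I and Type II bounds makes every non-principal contribution $O(X^{5/6-\delta})$, which together with the main term gives the claimed asymptotic. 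The hard part is the Type II step: it is not of the $\SL_2$-automorphic-kernel type treated here and genuinely requires the conjectural Hecke-eigenvalue cancellation along cubic forms, whereas the Type I step is handled by Theorem~\ref{thm:mainblackbox} without any such hypothesis.
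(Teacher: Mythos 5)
This paper contains no proof of Theorem \ref{thm:x2y3} to compare against: it is stated only as an announced application, with the argument living in \cite{mx2y3} combined with the divisor-problem variant for $ax^2+by^3$ established in the companion paper \cite{GMquadratic} via Theorem \ref{thm:kinvtechnical}, and your outline follows exactly this route, correctly separating the unconditional ingredient (the automorphic-kernel estimate, which needs only some spectral gap $\theta<1/2$ and no progress on Selberg's conjecture) from the conditional one (Conjecture $\mathrm{C}_a(\eps)$ on square-root cancellation of Hecke eigenvalues along binary cubic forms). The finer structural claims you add (Heath-Brown identity, the divisor estimate entering precisely through the Type I sums, the local computation giving singular series $1$) are plausible and consistent with the paper's description but cannot be verified from this paper and would have to be checked against \cite{mx2y3} directly; note also that the technical input is Theorem \ref{thm:kinvtechnical} with its $h$-dependent second functional $\alpha_{2,h}$ (as the paper itself emphasises), rather than Theorem \ref{thm:mainblackbox} as such.
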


\subsection{Examples} \label{sec:examples}
It is instructive to consider what Theorem \ref{thm:mainblackbox} says if  $\Gamma=\Gamma_0(q)$, $\chi=1$, $\delta \gg 1,$ and $\alpha_1=\alpha_2= \delta_I$, that is, when
\begin{align*}
     \langle \alpha_1 |\Delta F| \alpha_2\rangle  =  \sum_{\gamma \in \Gamma}  F(\gamma ) - \frac{1}{|\Gamma \backslash \G|} \int_\G F( \g ) \d \g.
\end{align*}
An application of Theorem \ref{thm:mainblackbox} shows that this discrepancy is bounded by
\begin{align*}
  \ll (AD)^{1/2} X_0^{\theta} K_1^{1/2} K_2^{1/2},
\end{align*}
where, by using $0 \leq \langle \alpha_i |\Delta k_{X_i^2,R_i } | \alpha_i \rangle \leq \langle \alpha_i | \mathcal{K} k_{X_i^2,R_i }  | \alpha_i \rangle $, we have
\begin{align*}
    K_i  \leq   \sum_{\gamma \in \Gamma_0(q)} k_{X_i,R_i}(\gamma) \leq \sum_{\smqty(a & b \\ c & d) \in \Gamma_0(q)} \frac{\1\{|a|^2+|b/R_i|^2+|R_ic|^2+|d|^2\leq X_i^2\}}{\sqrt{|a|^2+|b/R_i|^2+|R_ic|^2+|d|^2}}.
 \end{align*}
Terms with $b=0$ contribute $\ll 1+(R_iq)^{-1}$ and terms with $c=0$ contribute at most $\ll 1+R_i$. Terms with $bc\neq 0$ can be estimated with a divisor bound by
\begin{align*}
    \ll X_i^{o(1)} \sum_{\substack{ad \equiv 1 \pmod{q} \\ |a|,|d| \leq X_i }} \frac{1}{\sqrt{|a|^2 + |d|^2}} \ll X_i^{o(1)}  \Bigl(1 +\frac{X_i}{q}\Bigr).
\end{align*}
Choosing $X_1=X_2=q$ we get
\begin{align*}
    \bigg|\sum_{\gamma \in \Gamma}  F(\gamma ) - \frac{1}{|\Gamma \backslash \G|} \int_\G F( \g ) \d \g\bigg| \ll (AD)^{1/2+o(1)} \bigg( 1+\frac{AD}{q^2} \bigg)^\theta  \bigg( 1 +\frac{A}{C} + \frac{C}{A q} \bigg)^{1/2} \bigg(  1 +\frac{D}{C} + \frac{C}{D q} \bigg)^{1/2}.
\end{align*}
If the ranges are skewed, that is, if one of the ratios $A/C,C/(Aq), D/C, C/(Dq)$ is larger than $(AD)^{\eps}$, then the sum $\sum_{\gamma \in \Gamma}  F(\gamma )$ may be evaluated more easily by applying Poisson summation formula to one of the variables. If the ranges are not skewed, then the bound simplifies to $(AD)^{1/2+o(1)} ( 1+\frac{AD}{q^2} )^\theta,$ which is non-trivial provided that $q \leq (AD)^{1/2-\eps}$. For this simple example, the same bounds can be obtained via sums of Kloosterman sums and the spectral large sieve \cite{deshoullieriwaniec}. In general, this illustrates how our kernel-based approach leads to results of at least the same strength as using the spectral large sieve. 

As a second example of how to apply Theorem \ref{thm:mainblackbox}, we show the following version of \cite[Theorem 1.1]{GM} that for practical purposes has a slightly better dependency on  $\theta$. In particular, this indicates how the results in \cite{GM} are contained in  the approach here. 
\begin{corollary} 
Let $\mathcal{M} \subseteq \SL(\R)$,\, $q_1,q_2\in \Z_{>0}$ and denote $\Gamma = \Gamma_2(q_1,q_2)$. Assume that $\mathcal{M}$ is left 
$\Gamma$-invariant with finitely many orbits $\Gamma \backslash \mathcal{M}$ and let $\alpha:\mathcal{M} \to\C$ be left $\Gamma$-invariant.  Let $A,C,D,\delta > 0$  with $AD > \delta$ and let $f \in C^{10}_\delta (A,C,D)$. Denote $F:\G \to \C, \, F(\smqty(a & b \\ c& d)):= f(a,c,d)$. Then  we have
\begin{align*}
   \bigg|  \sum_{\g \in \mathcal{M}}  \alpha(\g) F(\g)  -  \frac{1}{|\Gamma \backslash \SL(\R) |} \sum_{\tau \in \Gamma \backslash \mathcal{M}} \alpha(\tau)  \int_{\G} F(\g) \, \d \g \bigg|  \ll (AD)^{o(1)} \delta^{-O(1)}   \Bigl(1+\frac{AD}{X_1}\Bigr)^{\theta} \sqrt{ AD \, \mathcal{K} \, \mathcal{R}},
\end{align*}
where $X_1 = \max\{q_1 q_2,  q_2 A /C, q_1 C  /A\}$,
\begin{align*}
\mathcal{K}& = \sum_{\substack{\g =\smqty(a & b \\c & d) \in \mathcal{M}^{-1} \mathcal{M} \\  |a|+|b|C/D+|c|D/C+|d| \,\leq 10}}\Bigl|\sum_{ \substack{\tau\in \Gamma \backslash \mathcal{M} 
 \\ \tau \g \in \mathcal{M}}} \alpha(\tau) \overline{\alpha(\tau g)} \Bigr| \quad \quad\text{and}  \quad \quad \mathcal{R} =    1 + \frac{A}{C q_1}+\frac{C}{A q_2}.
\end{align*}
\end{corollary}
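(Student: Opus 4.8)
The plan is to realise the left-hand side as a weighted average discrepancy $\langle\alpha_1|\Delta F|\alpha_2\rangle$ with a point mass in the first slot, apply Theorem~\ref{thm:mainblackbox}, and unfold the two diagonal terms it produces into the shapes $\mathcal{R}$ and $\mathcal{K}$. Fix once and for all a set of representatives for $\Gamma\backslash\mathcal{M}$, so that $\mathcal{M}=\bigsqcup_{\tau\in\Gamma\backslash\mathcal{M}}\Gamma\tau$, take $\chi$ trivial, and set $\alpha_1=\delta_I$ and $\alpha_2=\sum_{\tau\in\Gamma\backslash\mathcal{M}}\alpha(\tau)\delta_\tau$; both lie in $\mathcal{L}_c(\G)$ since there are finitely many orbits. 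Using left $\Gamma$-invariance of $\alpha$ together with the bijective substitution $\g=\gamma\tau$, one obtains
\[
\langle\delta_I|\Delta F|\alpha_2\rangle=\sum_{\tau\in\Gamma\backslash\mathcal{M}}\alpha(\tau)\sum_{\gamma\in\Gamma}F(\gamma\tau)-\frac{\sum_{\tau}\alpha(\tau)}{|\Gamma\backslash\G|}\int_{\G}F(\g)\,\d\g=\sum_{\g\in\mathcal{M}}\alpha(\g)F(\g)-\frac{1}{|\Gamma\backslash\SL(\R)|}\sum_{\tau\in\Gamma\backslash\mathcal{M}}\alpha(\tau)\int_{\G}F(\g)\,\d\g,
\]
which is exactly the quantity to be estimated.

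I would then apply Theorem~\ref{thm:mainblackbox} with $R_1=A/C$, $R_2=D/C$ and the free parameters $X_1:=\max\{q_1q_2,\,q_2A/C,\,q_1C/A\}$ (the quantity in the statement), $X_2:=1$ and $X_0:=\max\{1,AD/X_1\}$. These satisfy $X_0,X_1,X_2\ge1$, $X_0X_1X_2=\max\{X_1,AD\}\ge AD$ and $X_0^{\theta}\ll(1+AD/X_1)^{\theta}$, so the theorem reduces the corollary to the two diagonal bounds
\[
\langle\delta_I|\Delta k_{X_1^2,A/C}|\delta_I\rangle\ll(AD)^{o(1)}\mathcal{R}\qquad\text{and}\qquad\langle\alpha_2|\Delta k_{1,D/C}|\alpha_2\rangle\ll\mathcal{K}.
\]
For the second bound the subtracted principal term of $\Delta$ is nonnegative (as $k_{1,D/C}\ge0$), so $0\le\langle\alpha_2|\Delta k_{1,D/C}|\alpha_2\rangle\le\langle\alpha_2|\mathcal{K}k_{1,D/C}|\alpha_2\rangle$; unfolding $\g=\tau_1^{-1}\gamma\tau_2$ and using left $\Gamma$-invariance of both $\mathcal{M}$ and $\alpha$ collapses the right side into
\[
\langle\alpha_2|\mathcal{K}k_{1,D/C}|\alpha_2\rangle=\sum_{\g\in\mathcal{M}^{-1}\mathcal{M}}k_{1,D/C}(\g)\sum_{\substack{\tau\in\Gamma\backslash\mathcal{M}\\\tau\g\in\mathcal{M}}}\overline{\alpha(\tau)}\,\alpha(\tau\g),
\]
and since $0\le k_{1,D/C}\le1$ is supported where $u_{D/C}(\g)\le1$, i.e. where $a^2+(bC/D)^2+(cD/C)^2+d^2\le6$, which by Cauchy--Schwarz forces $|a|+|b|C/D+|c|D/C+|d|\le10$, taking absolute values inside gives $\le\mathcal{K}$.

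The first bound is the delicate step and the main obstacle. Here nonnegativity of the subtracted term gives $\langle\delta_I|\Delta k_{X_1^2,A/C}|\delta_I\rangle\le(\mathcal{K}k_{X_1^2,A/C})(I,I)=\sum_{\gamma\in\Gamma}k_{X_1^2,A/C}(\gamma)$, and the pointwise bound on $k_{Y,R}$ from Theorem~\ref{thm:mainblackbox} turns the right side into a weighted lattice count over $\Gamma_2(q_1,q_2)$ of exactly the type treated in the first worked example of Section~\ref{sec:examples}. Splitting this count into the ranges $b=0$, $c=0$ and $bc\ne0$ — the last handled by a divisor bound using $b\equiv0\pmod{q_1}$, $c\equiv0\pmod{q_2}$, $ad\equiv1\pmod{q_1q_2}$ and $|a|,|d|\ll X_1$ — one must verify, keeping careful track of the $(AD)^{o(1)}$ and $\delta^{-O(1)}$ factors and of the skewing $R_1=A/C$, that the three ranges produce respectively the constant $1$, the off-diagonal terms $A/(Cq_1)+C/(Aq_2)$, and $X_1/(q_1q_2)$; the point of the definition of $X_1$ is precisely that $X_1/(q_1q_2)=\max\{1,A/(Cq_1),C/(Aq_2)\}$ is then absorbed into $\mathcal{R}$. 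Combining the two diagonal bounds with the displayed consequence of Theorem~\ref{thm:mainblackbox} finishes the proof; everything beyond that lattice count is formal, the two unfoldings being bijective once a set of coset representatives is fixed and the passages from $\Delta$ to $\mathcal{K}$ using only the nonnegativity built into Theorem~\ref{thm:mainblackbox} together with $k_{Y,R}\ge0$.
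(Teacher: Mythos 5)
Your proposal is correct and follows essentially the same route as the paper: write the left-hand side as $\langle \delta_I | \Delta F | \alpha\rangle$, apply Theorem \ref{thm:mainblackbox} with $X_2=1$ and the stated $X_1$ (so $X_0^{\theta}\le(1+AD/X_1)^{\theta}$), bound the $\alpha$-diagonal by positivity and the triangle inequality to get $\mathcal{K}$, and bound the $\delta_I$-diagonal by the lattice count from the first example of Section \ref{sec:examples} to get $\mathcal{R}$ via $X_1/(q_1q_2)=\max\{1,A/(Cq_1),C/(Aq_2)\}$. The only cosmetic difference is that you apply Theorem \ref{thm:mainblackbox} directly to the congruence subgroup $\Gamma_2(q_1,q_2)$, whereas the paper first conjugates to $\Gamma_0(q_1q_2)$ by the conjugation symmetry; the two are equivalent.
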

\begin{proof}
  Since $ \Gamma_2(q_1,q_2) = \mathrm{a}[q_1]  \Gamma_0(q_1q_2) \mathrm{a}[1/q_1],$ by conjugation symmetry it suffices to prove the claim for $\Gamma_0(q)$. Denoting $T=\Gamma \backslash \mathcal{M}$,  $\alpha_1 = \delta_{I}$, $\alpha_2 = \alpha $, the left-hand side becomes $ |\langle\alpha_1|\Delta F| \alpha_2 \rangle | $. Thus, applying Theorem \ref{thm:mainblackbox} with  $X_2=1$ we have
\begin{align*}
| \langle \alpha_1 |\Delta F| \alpha_2\rangle | \ll \delta^{-O(1)} (AD)^{1/2+o(1)} \Bigl(\frac{AD}{X_1}\Bigr)^{\theta}  \sqrt{ \, \langle \alpha_1|  \Delta k_{X_1,R_1} | \alpha_1\rangle  \, \langle \alpha_2| \Delta k_{1,R_2} | \alpha_2\rangle}  ,
\end{align*}
The first kernel is bounded by the same argument as in the first example, making the choice $X_1=\max\{q,q R_1 , R_1^{-1}\}$. The second kernel satisfies by positivity and the triangle inequality
 \begin{align*}
   0 \leq  \langle \alpha_2| \Delta k_{1,R_2}| \alpha_2\rangle \leq  \langle\alpha_2|\mathcal{K}k_{1,R_2}| \alpha_2\rangle  \leq \mathcal{K}.
 \end{align*}
\end{proof}

\subsection{A generalisation to linear functionals} 
Theorem \ref{thm:mainblackbox} applies more generally to linear functionals in place of the finitely supported weights. This language also allows for a conceptual proof sketch, see the next subsection. Let $\mathcal{L}(\G)$  denote the set of linear functionals $C(\G) \to \C$. For $\alpha \in \mathcal{L}(\G)$ denote the image of $ f \in C(\G)$ by $\langle f\rangle_\alpha  := \alpha(f) \in \C$. 
\begin{definition}[Compactly supported linear functional]
We define the set of compactly supported linear functionals via
\begin{align*}
    \mathcal{L}_c(\G) := \bigcup_{K \subseteq \G \, \text{compact}} \bigcup_{c>0}\Bigl\{\alpha \in \mathcal{L}(\G):     |  \langle f \rangle_\alpha| \leq c \sup_{\g \in K}  |f(\g)| \text{ for all } f \in C(\G)\Bigr \}.
\end{align*}
\end{definition}
We make this definition to ensure convergence of the spectral expansion in the proof. It can be relaxed to include more general functionals, as long as the relevant sums are absolutely convergent. In practice, we are interested in functionals given by weighted sums $f \mapsto \sum_{\tau \in T}  \alpha(\tau)f(\tau)$ for some finite set $T \subseteq 
 \G$ and weight $\alpha:T \to \C$. For any $\alpha \in \mathcal{L}(\G)$ we define the complex conjugate by 
\begin{align*}
  \langle f \rangle_{\overline{\alpha}} =  \overline{ \langle \overline{f} \rangle_{\alpha} } .
\end{align*}
\begin{definition}[Sesquilinear form induced by a binary function]
For any  $F \in C(\G \times \G)$ we define the sesquilinear form  $\mathcal{L}_c(\G) \times \mathcal{L}_c(\G) \to \C$ 
\begin{align*}
 \langle \alpha_1 | F | \alpha_2 \rangle := \langle \langle F \rangle_{\overline{\alpha_1}} \rangle_{\alpha_2},
\end{align*}
where $\overline{\alpha_1}$ acts on the left variable of $F(\g_1,\g_2)$ and $\alpha_2$  acts on the right variable.
\end{definition}
Restriction to $\mathcal{L}_c(\G)$ means that this is well-defined, that is, $\langle F \rangle_{\overline{\alpha_1}}$ defines a continuous function in the second variable, and we have $\langle F \rangle_{\overline{\alpha_1}} \rangle_{\alpha_2} = \langle \langle F \rangle_{\alpha_2} \rangle_{\overline{\alpha_1}},$  where in both expressions $\overline{\alpha_1}$ acts on the left variable and $\alpha_2$  acts on the right variable. This may be viewed as a tensor product of $\overline{\alpha_1}$ and $ \alpha_2$, that is, for $F(\tau_1,\tau_2) =\overline{f_1(\tau_1)} f_2(\tau_2)$ we have
\begin{align*}
   \langle \alpha_1 | F | \alpha_2 \rangle =  \overline{\langle f_1\rangle_{\alpha_1}}   \langle f_2\rangle_{\alpha_2}
\end{align*}
Letting the binary function be $\Delta F (\tau_1,\tau_2)$, this generalizes the weighted average discrepancy to linear functionals and Theorem \ref{thm:mainblackbox} applies to the finitely supported weights replaced by $\alpha_1,\alpha_2 \in \mathcal{L}_c(\G)$.

\subsection{Sketch of the proof of Theorem \ref{thm:mainblackbox}}
As suggested by the notation, Theorem \ref{thm:mainblackbox} may be seen as a quantitative variant of the popular  $L^\infty L^2 L^2$ inequality
\begin{align} \label{eq:LinftyL2L2}
    |\langle \mathbf{a}_1 | \mathbf{X}_1^\ast \mathbf{X}_0 \mathbf{X}_2 | \mathbf{a}_2 \rangle| \leq \|\mathbf{X}_0\|_{\mathrm{op}} \sqrt{\langle \mathbf{a}_1 | \mathbf{X}_1^\ast \mathbf{X}_1 | \mathbf{a}_1 \rangle \, \langle \mathbf{a}_2 | \mathbf{X}_2^\ast \mathbf{X}_2 | \mathbf{a}_2 \rangle},
\end{align}
which is true for any $\mathbf{a}_1,\mathbf{a}_2 \in \mathbf{H} $  and bounded operators $\mathbf{X}_0,\mathbf{X}_1,\mathbf{X}_2$ on  a Hilbert space $\mathbf{H}$ with the inner product $\langle\cdot| \cdot \rangle$. We now explain this correspondence.

The first step in the proof of Theorem \ref{thm:mainblackbox} is to reduce to the balanced case $R_1=R_2=1$ via what we call \emph{unskewing}. Denote $\a[y] := \smqty(y^{1/2} & \\ & y^{-1/2})$ and
\begin{align*}
    \widetilde{F}(\g) := F(\a[R_1] \g \a[R_2]^{-1})
\end{align*}
and define linear functionals $\widetilde{\alpha}_j$ via
\begin{align*}
 \langle f \rangle_{\widetilde{\alpha}_j} =   \langle r_{\a[R_j]}f \rangle_{\alpha_j}. 
\end{align*}
Then we have
\begin{align*}
 \langle  \alpha_1|  \Delta F | \alpha_2 \rangle=       \langle  \widetilde{\alpha}_1|  \Delta \widetilde{F} | \widetilde{\alpha}_2\rangle \text{ and }  \langle  \alpha_j|  \Delta k_{X_j^2,R_j} | \alpha_j \rangle = \langle  \widetilde{\alpha}_j|  \Delta k_{X_j^2,1} | \widetilde{\alpha}_j \rangle. 
\end{align*}
Noting that $\widetilde{F}\smqty(a & b \\ c& d))= \widetilde{f}(a,c,d)$ with $\widetilde{f} \in C^{10}_\delta(\sqrt{AD},\sqrt{AD},\sqrt{AD})$, it then suffices to prove the theorem for $R_1=R_2=1$.

Let us denote the convolution on $\G$ by $(F_1\ast F_2) (\g) = \int_\G F_1(\h) F_2(\h^{-1} \g)\d \h $.  We sketch the remaining proof for the special case that $\chi=1$ and $F=\overline{F_1}\ast F_0\ast F_2$, where $F_j:\G \to \C$ are  bi-$\mathrm{K}$-invariant smooth bump functions supported on $u(\g) = u_1(\g) \asymp X_j$ with $AD=X_0X_1X_2$. While the general case requires a more concrete approach compared to this abstract sketch, the input is morally the same. The main difference is that the involved functions do not factorise neatly and are not bi-$\mathrm{K}$-invariant. The factorisation is still approximately true on the spectral side and, as a result of the unskewing, the latter can be accounted for by incorporating wiggle room in the left and right $\mathrm{K}$-types up to height at most $(AD)^{o(1)}$. 

For bi-$\K$-invariant $F$ the kernel function $\mathcal{K}F(\tau_1,\tau_2)$ is right-$\K$-invariant in both coordinates. The compactly supported linear functionals on $C( \G / \mathrm{K})$ form a vector space $\mathcal{L} = \mathcal{L}_c(\G/\K)$. 
For any small $\delta > 0$, let $\psi_{\delta}(u(\g)) =  \psi(\delta^{-1} u(\g)) $ for $\psi:[0,\infty) \to \R_{\geq 0}$ a smooth function supported on $[0,1]$  and $\psi=1$ on $[0,1/2]$.  We define the normalised convolution 
\begin{align*}
    h_{\delta} = \frac{1}{\|\psi_{\delta} \ast \psi_{\delta}\|_1} \psi_{\delta} \ast \psi_{\delta},
\end{align*}
so that $h_{\delta} $ approximates the Dirac mass on  $u(\g)=0$ with respect to the Haar measure $\d \g$. A key observation is that this defines an inner product on $\mathcal{L}$ via
\begin{align*}
    \langle \alpha_1,\alpha_2\rangle_{\delta} :=   \langle \alpha_1 | \Delta h_\delta | \alpha_2 \rangle.
\end{align*}
 The positive-definiteness $ \langle \alpha,\alpha\rangle_{\delta}  > 0$ for $\alpha \neq 0$ follows by the spectral expansion of an automorphic kernel (the pretrace formula, Proposition \ref{prop:kernelexpansion}) and the spectral positivity of convolutions of real valued even functions (cf. Lemma \ref{le:convosquare}). Conjugate symmetry follows by using $h_\delta(\g) =  h_\delta(\g^{-1})$ to get $\Delta h_\delta (\tau_1,\tau_2) = \Delta h_\delta (\tau_2,\tau_1)$.

A compactly supported $f \in C(\mathrm{K} \backslash \G / \mathrm{K})$ defines an operator $\mathbf{X}_f:\mathcal{L} \to \mathcal{L}$  via convolution
\begin{align*}
    \langle F \rangle_{\mathbf{X}_f\alpha}  =      \langle  F \ast f \rangle_{\alpha}.
\end{align*}
 Then, since Dirac delta is the unit for convolutions, for $F=\overline{F_1}\ast F_0 \ast F_2$ we have 
 as $\delta \to 0$
\begin{align} \label{eq:diraclimit}
     \langle \mathbf{X}_{F_1} \alpha_1, \mathbf{X}_{F_0} \mathbf{X}_{F_2}\alpha_2\rangle_{\delta} \to  \langle \alpha_1 |\Delta F| \alpha_2 \rangle.
\end{align}
Furthermore, uniformly in sufficiently small $\delta>0$, we have the $L^\infty$-estimate 
\begin{align*}
\|\mathbf{X}_{F_0}\|_{\mathrm{op},\delta} := \sup_{0\neq \alpha \in \mathcal{L}} \sqrt{\frac{\langle \mathbf{X}_{F_0} \alpha, \mathbf{X}_{F_0} \alpha\rangle_{\delta} }{\langle  \alpha, \alpha \rangle_{\delta} }} \ll X_0^{1/2+\theta},
\end{align*}
which follows from Lemmas \ref{le:phitrivialbound},  \ref{lem:simplephiasymp}, and Proposition \ref{prop:decayspec}. Then  by \eqref{eq:LinftyL2L2}, which is of course true more generally for bounded operators on an inner product space, we get
\begin{align} \label{eq:introCS}
    |\langle \mathbf{X}_{F_1} \alpha_1, \mathbf{X}_{F_0} \mathbf{X}_{F_2}\alpha_2\rangle_{\delta}| \ll  X_0^{1/2+\theta} \sqrt{  \langle \mathbf{X}_{F_1} \alpha_1, \mathbf{X}_{F_1} \alpha_1\rangle_{\delta}   \langle \mathbf{X}_{F_2} \alpha_2, \mathbf{X}_{F_2}\alpha_2\rangle_{\delta}}.
\end{align}
As $\delta \to 0$ we have
\begin{align*}
    \langle \mathbf{X}_{F_j} \alpha_j, \mathbf{X}_{F_j} \alpha_j\rangle_{\delta} \to \langle \alpha_j | \Delta (F_j \ast F_j) | \alpha_j \rangle,
\end{align*}
and in Lemma \ref{lem:k_Zupperbound} we show that
\begin{align*}
    | (F_j \ast F_j)(\g)| \ll X_j k_{X_j^2,1}(\g).
\end{align*}
Plugging this into (\ref{eq:introCS}) and using \eqref{eq:diraclimit}, we obtain the bound claimed in Theorem \ref{thm:mainblackbox}. 

In the more technical Theorem \ref{thm:technical}, we add Hecke operators that naturally arise when considering a determinant equation $ad-bc=h$ with $|h| > 1$. There we use an $L^\infty$-bound on the spectral side for the Hecke eigenvalues. Note that treating them as a part of the linear functionals is already contained in Theorem \ref{thm:mainblackbox}. Recalling the multiplicativity relations for the Hecke operators, this shows that the determinant $h$ behaves analogously to the scales $X=X_0 X_1 X_2$, the only difference is that for $h$ we cannot choose an arbitrary factorisation but require a factorisation over integers. We also note that for large $h$ we need to normalise the matrices $\smqty(a& b\\ c& d)$ by $1/\sqrt{h}$ to get a weight on $\G$, which replaces $X$ by $X/h$. 

\subsection{Further discussion}
We were partially inspired by the recent work of Pascadi \cite{pascadi2024large}, who showed how the spectral large sieve for exceptional eigenvalues may be bounded in terms of a counting problem in physical space. Indeed, we suspect that Theorem \ref{thm:mainblackbox} could also be proved via Poisson summation and the Kuznetsov trace formula, but this seems circuitous. If required, one can even apply the Poisson summation formula to $a,d$ to write $\Delta k_{X_i^2,R_i}(\tau_1,\tau_2)$ in terms of sums of Kloosterman sums, which shows that we have not lost any information compared to the Kloostermania approach. In Theorem \ref{thm:technical} we use an $L^\infty$ bound for the Hecke eigenvalues, which allows us to capture the advantages of the recent work of Assing, Blomer, and Li \cite{ABL}.

The main benefit compared to previous approaches is that we can carry all information about the functionals $\alpha_1,\alpha_2$ through the Cauchy-Schwarz application, which is especially useful if one has averaging over the level variable. We then get essentially optimal results by bounding the non-negative $ \langle  \alpha_i| \Delta k_{X_i^2,R_i}| \alpha_i\rangle$ crudely by $ \langle  \alpha_i| \mathcal{K} k_{X_i^2,R_i}| \alpha_i\rangle$ and estimating these rather trivially (cf. \cite{GMquadratic} for more details).

As mentioned, we win over trivial upper bounds solely thanks to the spectral gap $\theta  < 1/2$. Remarkably, the applications (Theorems \ref{thm:primefactor}, \ref{thm:roots}, \ref{thm:x2y3}) do not depend at all on the quality of this gap and we get the same results as long as  $\theta  \leq  1/2- \eps $ for some $\eps >0$ independent of $\Gamma$. In particular, the results are independent of progress towards Selberg's eigenvalue conjecture. We expect that this holds true for other applications where one has level averaging, such as primes in arithmetic progressions to large moduli, where also currently the best ranges \cite{pascadi2024large} rely on the Kim-Sarnak bound \cite{KimSarnak}.

A remaining advantage of Kloostermania compared to the results here is in the case when an oscillating weight $F$ causes the spectral expansion to blow-up along the regular spectrum. This happens, for instance, if one considers divisor correlations $d(n)d(n+h)$ with the rough cut-off $n \leq X$ or with a very large shift $h > X^{1+\eps}$, or if one studies the fourth moment of the zeta function in a short interval. In part II, we will bridge this gap by showing how to handle oscillatory weight functions $F$ with at least as good uniformity as one gets from Kloostermania, while keeping all the advantages of this paper. This will be achieved by a more general unskewing procedure that involves both $\a[y]$ and $\n[x]$ matrices in contrast to using only $\a[y]$ matrices.

\section{Background}
This section contains standard background material, much of which is already contained in \cite[Sections 2 and 3]{GM}. 
\subsection{Coordinates}
We start by introducing two coordinate systems for the Lie group 
\begin{align*}
 \G:=  \SL_2(\R)  = \bigg\{ \mqty(a&b \\ c& d) : a,b,c,d \in \R, ad-bc=1  \bigg\} 
\end{align*}
following mostly the notation in \cite{motohashielements}. The relevant subgroups for us are
\begin{align*}
    \mathrm{N} :=& \bigg\{ \mathrm{n}[x] =  \mqty(1&x \\ & 1) : x \in \R  \bigg\}  \\
      \mathrm{A} := & \bigg\{ \mathrm{a}[y]= \mqty(\sqrt{y }& \\ & 1/\sqrt{y}) : y \in (0,\infty)  \bigg\} \\
        \mathrm{K} := &\bigg\{ \mathrm{k}[\theta] =  \mqty(\cos \theta & \sin \theta \\ -\sin \theta & \cos \theta) : \theta \in \R / 2\pi \Z \bigg\}  
\end{align*}
We will refer to the groups $ \mathrm{N}, \mathrm{A}, \mathrm{K}$ and their elements $\n,\a,\k$ without further notice and these symbols are reserved for this purpose.

We then have the \emph{Iwasawa decomposition}
\begin{align*}
    \G= \mathrm{N} \mathrm{A} \mathrm{K}  ,
\end{align*}
where the representation $\g= \smqty(a& b \\ c & d) = \mathrm{n}[x] \mathrm{a}[y] \mathrm{k}[\theta]$ is uniquely defined for $\theta \in [0,2\pi)$ via the sign $\pm$ of $a,b,c,d$ and
\begin{align} \label{eq:matrixtoiwasawa}
   x= \frac{ac+bd}{c^2+d^2},\quad y= \frac{1}{c^2+d^2}, \quad \theta = \arctan \bigg(-\frac{c}{d}\bigg)
\end{align}
and 
\begin{align} \label{eq:iwasawatomatrix}
 \mathrm{n}[x] \mathrm{a}[y] \mathrm{k}[\theta] = \mqty( \sqrt{y} \cos \theta  - \frac{x}{\sqrt{y}} \sin \theta   &  \sqrt{y} \sin \theta  + \frac{x}{\sqrt{y}} \cos \theta \\ - \frac{1}{\sqrt{y}} \sin \theta &\frac{1}{\sqrt{y}} \cos \theta ).
\end{align}
The first two Iwasawa coordinates $(x,y)$ correspond to the upper half-plane $\H:=\{x+iy: y > 0\}$, which is isomorphic to the set of cosets $\G/\mathrm{K}$, in the sense that the group action on $\H$ via $z \mapsto \frac{az+b}{cz+d}$ is the matrix multiplication from the left on $\G/\mathrm{K}$.

We also have the \emph{Cartan decomposition}
\begin{align*}
    \G= \mathrm{K} \mathrm{A} \mathrm{K},
\end{align*}
where the representation $g = \mathrm{k}[\varphi] \mathrm{a}[e^{-\varrho}]\mathrm{k}[\vartheta]$, $\cosh(\varrho) = 2u +1$, is unique provided that $\varrho > 0$ and $\varphi \in [0,\pi)$. We have
\begin{align} \label{eq:cartaninverserho}
    \mathrm{k}[\varphi] \mathrm{a}[e^{\varrho}]\mathrm{k}[\vartheta] = \mathrm{k}[\varphi + \pi/2] \mathrm{a}[e^{-\varrho}]\mathrm{k}[\vartheta - \pi/2] = \mathrm{k}[\varphi + \pi] \mathrm{a}[e^{\varrho}]\mathrm{k}[\vartheta - \pi].
\end{align}
The  Cartan coordinates are best described by the isomorphism of groups via conjugation by the Cayley matrix $\mathcal{C} = \smqty(1 & -i \\ 1 & i)$, which diagonalizes $\mathrm{K}$, 
\begin{align*}
    \SL_2(\R) &\cong \operatorname{SU}_{1,1}(\R) := \bigg\{ \mqty( \alpha & \beta \\ \overline{\beta} & \overline{\alpha}): \alpha,\beta \in \C, \, |\alpha|^2-|\beta|^2 = 1    \bigg\}, \\
    \g &\mapsto \mathcal{C} \g \mathcal{C}^{-1}.
\end{align*}
From this we can compute for $\smqty(a& b \\ c& d)= \mathrm{n}[x] \mathrm{a}[e^t] \mathrm{k}[\theta]=  \mathrm{k}[\varphi] \mathrm{a}[e^{-\varrho}]\mathrm{k}[\vartheta]  $

\begin{align} 
\label{eq:iwasawatocartan}  &\begin{cases} \alpha = \frac{1}{2}(a+d + i(b-c))=e^{i\theta}  (\cosh(t/2) + i e^{-t/2} x /2) = e^{i(\varphi + \vartheta)} \cosh(\varrho/2),  \\ \beta =\frac{1}{2}(a-d - i(b+c) ) =e^{-i\theta} (\sinh(t/2) - i e^{-t/2} x /2) =e^{i(\varphi - \vartheta)} \sinh(-\varrho/2) .\end{cases}
\end{align}
The first two Cartan coordinates $(\varphi,e^{-\varrho})$ correspond to the polar coordinates of the Poincar\'e disk model of the upper half-plane
\begin{align*}
    \H \cong \{ |z| < 1\}: z \mapsto \frac{z-i}{z+i}.
\end{align*}

Denoting 
\begin{align*}
u(z,w):= \frac{|z-w|^2}{4 \Im(z) \Im(w)}, \quad z,w\in \H,    
\end{align*}
 we have $\cosh(\varrho) = 2u(\g i,i) +1$. For $\g= \smqty(a & b \\ c& d) \in \G$ we define
 \begin{align} \label{eq:ulowerbound}
  u(\g) := u(\g i, i) = \tfrac{1}{4}  (a^2+b^2+c^2+d^2-2).
 \end{align}
 We will denote $\mathrm{a}_u = \a[e^{-\varrho}]$ for $\varrho > 0$ such that $\cosh(\varrho) = 2u+1$.

We need the following lemma to pass from Cartan coordinates to Iwasawa coordinates.
\begin{lemma}\label{le:thetaphiSU}
    If $\k[\varphi] \a[e^{-\varrho}] = \k[\varphi] \a_u = \n \a \k[\theta]$, then
\begin{align*}
      e^{i\theta} =e^{i \varphi} \bigg(\frac{\cosh\tfrac{\varrho}{2} + e^{-i2\varphi}\sinh{\tfrac{\varrho}{2}}}{\cosh\tfrac{\varrho}{2} + e^{i2\varphi}\sinh{\tfrac{\varrho}{2}}}\bigg)^{1/2} = e^{i \varphi} \bigg(\frac{\sqrt{1+1/u} + e^{-i2\varphi}}{\sqrt{1+1/u} + e^{i2\varphi}}\bigg)^{1/2},
\end{align*}
with the branch of square-root defined by $1^{1/2} = 1$ and cut along $(-\infty,0]$.
\end{lemma}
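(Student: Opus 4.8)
The plan is to prove Lemma~\ref{le:thetaphiSU} by a direct computation in the $\operatorname{SU}_{1,1}$ model, using the explicit coordinate formulas \eqref{eq:iwasawatocartan} together with the uniqueness of the Iwasawa decomposition. First I would set $\g := \k[\varphi] \a_u$ and compute its $\operatorname{SU}_{1,1}$ entries $(\alpha,\beta)$ from \eqref{eq:iwasawatocartan}: since $\k[\varphi]\a[e^{-\varrho}]\k[0]$ is the Cartan form with $\vartheta = 0$, we read off $\alpha = e^{i\varphi}\cosh(\varrho/2)$ and $\beta = -e^{i\varphi}\sinh(\varrho/2)$ (using $\sinh(-\varrho/2) = -\sinh(\varrho/2)$). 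On the other hand, writing the same $\g$ in Iwasawa form $\n\a\k[\theta]$, the middle expression of \eqref{eq:iwasawatocartan} gives $\alpha = e^{i\theta}(\cosh(t/2) + i e^{-t/2}x/2)$ and $\beta = e^{-i\theta}(\sinh(t/2) - i e^{-t/2}x/2)$. The key algebraic observation is that these two bracketed quantities are complex conjugates of each other (both equal $\cosh(t/2) \pm i e^{-t/2}x/2$), so that $\overline{\alpha}/\alpha \cdot (\text{phase})$... more precisely, $\alpha \overline{\beta}^{\,-1}$-type ratios isolate $e^{2i\theta}$ cleanly.

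Concretely, the second step is to extract $e^{2i\theta}$ as a ratio. From the Iwasawa expressions, $\alpha e^{-i\theta} = \cosh(t/2) + i e^{-t/2}x/2$ and $\beta e^{i\theta} = \sinh(t/2) - i e^{-t/2}x/2 = -\overline{\alpha e^{-i\theta}} + \cosh(t/2) - \sinh(t/2)$; rather than chase this, the cleaner route is to observe $\overline{\alpha e^{-i\theta}} = \cosh(t/2) - ie^{-t/2}x/2$ and note there is a simple relation. Actually the slick identity is: $e^{2i\theta} = \alpha / \overline{\alpha} \cdot (\text{something real-positive})$ won't quite work since $|\cosh(t/2)+ie^{-t/2}x/2|$ is generally not $1$. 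Instead I would use both $\alpha$ and $\beta$: from $\beta/\alpha = e^{-2i\theta}\,\dfrac{\sinh(t/2) - ie^{-t/2}x/2}{\cosh(t/2)+ie^{-t/2}x/2}$ one does not immediately get a pure phase either. The genuinely clean manipulation: compute $\alpha/\overline{\alpha}$. Since $\alpha = e^{i\theta}(\cosh(t/2)+ie^{-t/2}x/2)$ we get $\alpha/\overline{\alpha} = e^{2i\theta}\,\dfrac{\cosh(t/2)+ie^{-t/2}x/2}{\cosh(t/2)-ie^{-t/2}x/2}$, which is a product of $e^{2i\theta}$ with a phase; this is circular. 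The correct bookkeeping, which I would carry out carefully, is to instead use the $\k[\varphi]\a_u$ side: there $\alpha = e^{i\varphi}\cosh(\varrho/2)$ and $\beta = -e^{i\varphi}\sinh(\varrho/2)$, so $\overline{\beta} = -e^{-i\varphi}\sinh(\varrho/2)$ and the third Iwasawa coordinate satisfies, via \eqref{eq:matrixtoiwasawa} and the $\operatorname{SU}_{1,1}$-to-matrix dictionary $a = \operatorname{Re}(\alpha+\beta)$, $d = \operatorname{Re}(\alpha - \beta)$, $c = -\operatorname{Im}(\alpha-\beta)$, $b = \operatorname{Im}(\alpha+\beta)$, an explicit expression $\tan\theta = -c/d = \operatorname{Im}(\alpha-\beta)/\operatorname{Re}(\alpha-\beta)$. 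Plugging in $\alpha - \beta = e^{i\varphi}\cosh(\varrho/2) + e^{i\varphi}\sinh(\varrho/2) \cdot$ ... wait, $\alpha - \beta = e^{i\varphi}(\cosh(\varrho/2) + \sinh(\varrho/2)) = e^{i\varphi}e^{\varrho/2}$, which would give $\theta = \varphi$, clearly wrong — so the matrix dictionary signs need care. The resolution is that $e^{i\theta}$ is determined by $c + id$ up to the positive scalar $\sqrt{c^2+d^2} = 1/\sqrt{y}$, i.e. $e^{i\theta} = (d - ic)/\sqrt{c^2+d^2}$; one then computes $d - ic$ from the $\operatorname{SU}_{1,1}$ entries and normalizes.

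So the concrete plan: (i) record the exact dictionary between $\smqty(a&b\\c&d)$ and $(\alpha,\beta) = (\tfrac12(a+d+i(b-c)),\tfrac12(a-d-i(b+c)))$, invert it to get $d - ic = \overline{\alpha} - \beta$ (to be verified: $\overline{\alpha} = \tfrac12(a+d - i(b-c))$, $\beta = \tfrac12(a-d-i(b+c))$, so $\overline{\alpha} - \beta = d + i\cdot\tfrac12((b+c) - (b-c)) = d + ic$; hence $e^{i\theta} = (d-ic)/\sqrt{c^2+d^2} = (\alpha - \overline{\beta})/|\alpha - \overline{\beta}|$ using $d - ic = \overline{d+ic} = \alpha - \overline{\beta}$); (ii) substitute $\alpha = e^{i\varphi}\cosh(\varrho/2)$, $\overline{\beta} = -e^{-i\varphi}\sinh(\varrho/2)$ to get $\alpha - \overline{\beta} = e^{i\varphi}\cosh(\varrho/2) + e^{-i\varphi}\sinh(\varrho/2)$; (iii) therefore $e^{i\theta} = \dfrac{e^{i\varphi}\cosh(\varrho/2) + e^{-i\varphi}\sinh(\varrho/2)}{|e^{i\varphi}\cosh(\varrho/2) + e^{-i\varphi}\sinh(\varrho/2)|}$, and since $|w| = (w\overline{w})^{1/2}$ with $\overline{w} = e^{-i\varphi}\cosh(\varrho/2) + e^{i\varphi}\sinh(\varrho/2)$, factor out $e^{i\varphi}$ from $w$ and $e^{-i\varphi}$ from $\overline w$ to obtain $e^{i\theta} = e^{i\varphi}\left(\dfrac{\cosh(\varrho/2) + e^{-2i\varphi}\sinh(\varrho/2)}{\cosh(\varrho/2) + e^{2i\varphi}\sinh(\varrho/2)}\right)^{1/2}$, which is the first claimed formula; (iv) finally convert to the $u$-form using $\cosh\varrho = 2u+1$, so $\cosh^2(\varrho/2) = u+1$, $\sinh^2(\varrho/2) = u$, hence $\coth(\varrho/2) = \sqrt{(u+1)/u} = \sqrt{1+1/u}$ — dividing numerator and denominator of the fraction by $\sinh(\varrho/2)$ gives the second formula. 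The branch statement ($1^{1/2}=1$, cut along $(-\infty,0]$) is justified by noting the radicand is never a negative real: its numerator and denominator are complex conjugates with positive real part $\cosh(\varrho/2) + \cos(2\varphi)\sinh(\varrho/2) > 0$ for $\varrho > 0$, so the quotient lies in the right half-plane away from the cut, and at $\varphi = 0$ it equals $1$.

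The main obstacle is purely a matter of sign-and-branch bookkeeping: getting the $\operatorname{SU}_{1,1}$-to-$\SL_2(\R)$ entry dictionary exactly right (the three equivalent Cartan representatives in \eqref{eq:cartaninverserho} mean one must pin down which representative $\k[\varphi]\a_u$ corresponds to $\vartheta = 0$ versus a shift by $\pi/2$ or $\pi$), correctly identifying $e^{i\theta}$ as $(d-ic)/\sqrt{c^2+d^2}$ with the right orientation matching $\theta = \arctan(-c/d)$ in \eqref{eq:matrixtoiwasawa}, and verifying that the square-root branch never hits the cut so that the stated normalization $1^{1/2}=1$ is consistent and continuous in $(\varphi,\varrho)$. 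None of the individual steps is deep, but a single sign error propagates, so I would double-check (iii) against a special case such as $\varphi = \pi/4$ or the limit $\varrho \to 0$ (where $e^{i\theta} \to e^{i\varphi}$, consistent with both sides).
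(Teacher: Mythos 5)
Your final plan (i)--(iv) is correct: the dictionary $d+ic=\overline{\alpha}-\beta$ (hence $d-ic=\alpha-\overline{\beta}$), the identification $e^{i\theta}=(d-ic)/\sqrt{c^2+d^2}$ coming from \eqref{eq:iwasawatomatrix}, the substitution $\alpha=e^{i\varphi}\cosh(\varrho/2)$, $\overline{\beta}=-e^{-i\varphi}\sinh(\varrho/2)$ from \eqref{eq:iwasawatocartan} with $\vartheta=0$, and the conversion $\coth(\varrho/2)=\sqrt{1+1/u}$ all check out, and the positivity of $\Re\bigl(\cosh\tfrac{\varrho}{2}+e^{-2i\varphi}\sinh\tfrac{\varrho}{2}\bigr)$ does pin down the principal branch (since for $z$ in the right half-plane the principal square root of $(z/|z|)^2=z/\overline{z}$ is exactly $z/|z|$). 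This is a genuinely different, somewhat more hands-on route than the paper's: the paper also passes to $\operatorname{SU}_{1,1}$ via the Cayley conjugation, but then extracts $e^{-2i\theta}$ from the M\"obius action on the boundary point $1$, using that $\n^1\a^1\diamond 1=1$, and recovers $e^{i\theta}$ by a square root whose sign is fixed by continuity from $\varphi=\theta=0$; you instead read $e^{i\theta}$ directly off the normalized bottom row of the matrix, which buys you the phase itself rather than its square, so the branch discussion reduces to the half-plane observation rather than a continuity argument. Two small points to tighten: your worry about the Cartan representative ambiguity in \eqref{eq:cartaninverserho} is moot, since $\k[\varphi]\a[e^{-\varrho}]=\k[\varphi]\a[e^{-\varrho}]\k[0]$ with $\varrho>0$ is literally the form in which \eqref{eq:iwasawatocartan} is stated; and the phrase ``the quotient lies in the right half-plane'' is imprecise --- the radicand $z/\overline{z}$ has modulus $1$ and need not have positive real part; what matters is that $\arg(z)\in(-\pi/2,\pi/2)$ forces $\arg(z/\overline{z})\in(-\pi,\pi)$, so the radicand avoids the cut $(-\infty,0]$, which is exactly the justification the paper gives. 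The exploratory false starts in your middle paragraph (the circular $\alpha/\overline{\alpha}$ attempt, the wrong $\tan\theta$ dictionary) are superseded by the final plan and should simply be deleted in a write-up.
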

\begin{proof}
   Denote Cayley matrix conjugates by
\begin{align*}
 \k^1[\varphi] & :=  \mathcal{C} \k[\varphi] \mathcal{C}^{-1} = \mqty(e^{i\varphi} \\ & e^{-i\varphi}) \\
  \a^1[e^{-\varrho}]&:= \mathcal{C}\a[e^{-\varrho}]\mathcal{C}^{-1}=\mqty(\cosh(\varrho/2) & -\sinh(\varrho/2) \\ -\sinh(\varrho/2) & \cosh(\varrho/2))\\
   \n^1[x]&:= \mathcal{C}\n[x]\mathcal{C}^{-1}=\mqty(1+\frac{ix}{2}& -\frac{ix}{2}\\ \frac{ix}{2} & 1-\frac{ix}{2}).
 \end{align*}
If $\k[\varphi] \a[e^{-\varrho}] = \n \a \k[\theta]$, then $\k^1[\varphi] \a^1[e^{-\varrho}] \k^1[-\theta]= \n^1 \a^1 .$ In particular, denoting by $\diamond$ the action of  $\smqty(a & b \\ c&d)$ on $\C \cup \{\infty\}$ by Möbius transformations $z \mapsto \frac{az +b}{cz+d}$, we have
\begin{align*}
\k^1[\varphi] \a^1[e^{-\varrho}]\k^1[-\theta]\diamond 1 =   \n^1 \a^1\diamond 1 = 1.
\end{align*}
Therefore, we have $ \k^1[-\theta] \diamond 1 = \a^1[e^{\varrho}] \k^1[-\varphi] \diamond 1$, so that
\begin{align*}
    e^{-i2\theta} = \k^1[-\theta] \diamond 1 =   \a^1[e^{\varrho}] \k^1[-\varphi] \diamond 1 = \frac{e^{-i\varphi}\cosh\tfrac{\varrho}{2} + e^{i\varphi}\sinh{\tfrac{\varrho}{2}}}{e^{i\varphi}\cosh\tfrac{\varrho}{2} + e^{-i\varphi}\sinh{\tfrac{\varrho}{2}}} = e^{-2i \varphi} \frac{\cosh\tfrac{\varrho}{2} + e^{i2\varphi}\sinh{\tfrac{\varrho}{2}}}{\cosh\tfrac{\varrho}{2} + e^{-i2\varphi}\sinh{\tfrac{\varrho}{2}}}.
\end{align*}
The lemma follows by multiplying by $e^{2i\varphi}$  and taking the the square-root, where the sign is determined by continuity from the sign at $\varphi= \theta = 0$. This is justified since by $ \Re (\sqrt{1+1/u} + e^{-i2\varphi}) > 0$ we have
\begin{align*}
\frac{\cosh\tfrac{\varrho}{2} + e^{i2\varphi}\sinh{\tfrac{\varrho}{2}}}{\cosh\tfrac{\varrho}{2} + e^{-i2\varphi}\sinh{\tfrac{\varrho}{2}}} = \frac{\sqrt{1+1/u} + e^{-i2\varphi}}{\sqrt{1+1/u} + e^{i2\varphi}}\in \C \setminus  (-\infty,0].
\end{align*}
\end{proof}

Finally, we define the \emph{Bruhat coordinates} which paramterise the big cell $c > 0$
\begin{align} \label{eq:bruhat}
    \g= \mqty(a& b \\ c& d) = \n[r_1] \mathrm{w} \a[c^2] \n[r_2], \quad r_1= \frac{a}{c}, \quad r_2=\frac{d}{c}, \quad \mathrm{w} = \mathrm{k}[\pi/2].
\end{align}

\subsection{Differential operators}
Following the notation in \cite{motohashielements},
we introduce matrices
\begin{align*}
    \mathrm{X}_1 := \mqty(\,\,&1 \\ \,\, &),  \quad \mathrm{X}_2 := \mqty(1& \\ &-1), \quad  \mathrm{X}_3 := \mqty( & 1 \\ -1 & ),
\end{align*}
which are the infinitesimal generators for the subgroups
\begin{align*}
    \mathrm{N} = \{\exp( t \mathrm{X}_1): t \in \R\}, \,    \mathrm{A} = \{\exp( t \mathrm{X}_2): t \in \R\}, \, \mathrm{K} = \{\exp( t \mathrm{X}_3): t \in \R\}.
\end{align*}
We then define the right Lie differentials
\begin{align*}
    \mathrm{x_j}f(\g) := \partial_t f(\g \,\exp(t \mathrm{X}_j)) |_{t=0}.
\end{align*}
By construction these operators are left-invariant under the action of the group $\G$ on itself. That is, denoting
\begin{align}\label{eq:lrdef}
    l_\h: f(\g) \mapsto f(\h\g), \quad r_\h: f(\g) \mapsto f(\g \h),
\end{align}
we have $\mathrm{x}_j l_\h=l_\h\mathrm{x}_j$ for $j\in\{1,2,3\}$.

We construct the \emph{raising and lowering operators}
\begin{align*}
    \mathrm{e}^+ :=& 2 \mathrm{x}_1 + \mathrm{x}_2-i\mathrm{x}_3, \\
      \mathrm{e}^- :=& -2i \mathrm{x}_1 + \mathrm{x}_2+i\mathrm{x}_3.
\end{align*}
In the Iwasawa coordinates  $ \mathrm{n}[x] \mathrm{a}[y] \mathrm{k}[\theta]$ we have
\begin{align*}
\mathrm{e}^+  &= e^{2i\theta} (2iy\partial_x + 2 y \partial_y-i\partial_\theta),\\
\mathrm{e}^- &= e^{-2i\theta}(-2iy\partial_x + 2 y \partial_y+i\partial_\theta),\\
\mathrm{x}_3 &= \partial_\theta.
\end{align*}
The operators $\mathrm{e}^+,\mathrm{e}^-$ correspond to $2 K_k, 2 \Lambda_k$ in the notations of \cite[Section 4]{DFIartin}. Denoting the commutator of differential operators by $[\mathrm{x},\mathrm{y}] = \mathrm{x} \mathrm{y} - \mathrm{y} \mathrm{x}$, we have by \cite[(7.2)]{motohashi}
\begin{align}\label{eq:diffopcommutator}
[\mathrm{x}_3, \mathrm{e}^{\pm}] =  \pm 2i \mathrm{e}^{\pm}, \quad [\mathrm{e}^+, \mathrm{e}^-] = -4i \mathrm{x}_3.
\end{align}

The \emph{Casimir operator} is then defined as
\begin{align} \label{eq:casimirdef}
    \Omega := -\frac{1}{4}  \mathrm{e}^{+}  \mathrm{e}^{-} + \frac{1}{4}\mathrm{x}_3^2 - \frac{1}{2} i \mathrm{x}_3 =  -\frac{1}{4}   \mathrm{e}^{-}\mathrm{e}^{+}  + \frac{1}{4}\mathrm{x}_3^2 + \frac{1}{2} i \mathrm{x}_3.
\end{align}
The Casimir element is not only left-invariant but also right-invariant under the action of the group $G$, that is,
\begin{align}\label{eq:Omegalrcommute}
    \Omega l_\h =  l_\h \Omega, \quad  \Omega r_\h =  r_\h \Omega.
\end{align}
In the Iwasawa coordinates $ \mathrm{n}[x] \mathrm{a}[y] \mathrm{k}[\theta]$ the Casimir operator is given by
\begin{align} \label{eq:casimiriwasawa}
    \Omega = -y^2 (\partial_x^2 + \partial_y^2) + y \partial_x \partial_\theta,
\end{align}
where the first part $-y^2 (\partial_x^2 + \partial_y^2)$ is the Laplace operator on the upper half-plane $\H$.
In the Cartan coordinates $\mathrm{k}[\varphi] \mathrm{a}[e^{-\varrho}]\mathrm{k}[\vartheta]$, denoting  $\cosh \varrho =2u+1,$ we have
\begin{align} \label{eq:casimircartan}
\begin{split}
    \Omega =&  - \partial_\varrho^2 - \frac{1}{\tanh \varrho} \partial_\varrho -\frac{1}{4\sinh^2 \varrho} \partial_\varphi^2  +\frac{1}{2 \sinh\varrho \tanh \varrho} \partial_\varphi \partial_\vartheta - \frac{1}{4 \sinh^2 \varrho}  \partial_\vartheta^2 \\
        =& -u(u+1) \partial_u^2 - (2u+1) \partial_u - \frac{1}{16u(u+1)}  \partial_\varphi^2   +   \frac{2u+1}{8 u(u+1)}\partial_\varphi \partial_\vartheta -  \frac{1}{16u(u+1)} \partial_\vartheta^2 .
\end{split}
\end{align}
Define the operator $\iota $ by $\iota f (\g) = f(\g^{-1})$. By the Bruhat decomposition \eqref{eq:bruhat} and by \eqref{eq:matrixtoiwasawa} we have
\begin{align} \label{eq:cartantobruhat}
 \partial_\vartheta = \partial_\theta = \frac{\partial r_2}{\partial \theta} \partial_{r_{2}} = (r_2^2+1) \partial_{r_2},   \quad \quad   \partial_\varphi = -\iota \partial_\vartheta \iota  =  (r_1^2+1) \partial_{r_1}.
\end{align}

\subsection{Integration}

The invariant Haar measure $\d \g$ on $\G$ is defined in terms of the Iwasawa and Cartan coordinates via (note the swap between $\a$ and $\n$)
\begin{align} \label{eq:integrationonG}
\begin{split}
     \d \mathrm{n}[x] = \d x, \quad & \d \mathrm{a}[y] = \frac{\d y}{y}, \quad \d \mathrm{k}[\theta] = \frac{\d \theta}{2\pi}, \\
   \int_\G  f(\g)\d \g =& \int_{\mathrm{A} \times \mathrm{N} \times \mathrm{K}} f(\a \n \k)  \d \a\d \n \d \k  =    \int_{\R\times \R_{>0} \times \R/2\pi\Z} f(\n[x] \a[y] \k[\theta]) \frac{\d x \d y \d \theta}{2\pi y^2} \\
   =&\int_{\mathrm{K} \times \mathrm{K}} \int_{\R_{>0}} f(\k_1 \a[e^{-\varrho}] \k_2)  \sinh(\varrho) \d \varrho \,\d \k_1 \d \k_2 \\
   =& 2 \int_{\mathrm{K} \times \mathrm{K}} \int_{\R_{>0}} f(\k_1 \a_u \k_2)   \d u \,\d \k_1 \d \k_2 
\end{split}
\end{align}
We have for any integrable $F:\G \to\C$
\begin{align} \label{eq:integralinmatrix}
   \frac{1}{ |\Gamma_0(q)\backslash \G|}    \int_\G  F(\g)\d \g = \frac{1}{  \zeta(2) \, q \, \prod_{p|q}(1+p^{-1})}   \int_{\R^3}  F(\smqty(a & \ast \\ c & d)) \frac{\d a \d c \d d}{c}.
\end{align}

The inner product on $L^2(\G)$ is defined by
\begin{align*}
    \langle f_1,f_2\rangle_\G := \int_\G f_1(\g) \overline{f_2(\g)} \d \g.
\end{align*}
Then we have the integration by parts formula
\begin{align} \label{eq:integrationbypartsraisinglowering}
   \langle 
 \mathrm{e}^{\pm} f_1,f_2\rangle_\G  = -   \langle 
 f_1, \mathrm{e}^{\mp}f_2\rangle_\G
\end{align}
 and $\Omega$ is  a symmetric operator, that is, 
 \begin{align} \label{eq:omegaselfadjoint}
      \langle 
\Omega f_1,f_2\rangle_\G   =   \langle 
 f_1, \Omega f_2\rangle_\G. 
 \end{align}

\subsection{Projections to fixed types}
For $\ell_1,\ell_2 \in \Z$  we say that a function $f:\G \to\C$ has left type $\ell_1$ (resp. right type $\ell_2$) if for all $\k[\theta]$ we have
\begin{align*}
    f(\k[\theta] \g ) = f(\g) e^{i\ell_1 \theta}. \quad (\text{resp. }  f(\g \k[\theta]) = f(\g) e^{ i\ell_2 \theta}).
\end{align*}
For any integrable $f:\G\to\C$ and any $\ell_1,\ell_2 \in \Z$ we define the projections to a fixed type by
\begin{align*}
(\mathcal{F}_{\ell_1,\ell_2} f) (\g) := \int_{\K \times \K} f(\k[\theta_1] \g \k[\theta_2])  e^{-i \ell_1 \theta_1 - i \ell_2 \theta_2} \d \k[\theta_1] \d \k[\theta_2].
\end{align*}
By \eqref{eq:cartaninverserho} we have
\begin{align*}
    (\mathcal{F}_{\ell_1,\ell_2} f) (\g) =   (\mathcal{F}_{\ell_1,\ell_2} f) (\k[-\pi]\g\k[\pi]) =  (-1)^{\ell_2-\ell_1}(\mathcal{F}_{\ell_1,\ell_2} f) (\g),
\end{align*}
which implies that $  \mathcal{F}_{\ell_1,\ell_2} f$ vanishes unless $\ell_1\equiv \ell_2 \pmod{2}$.  We then have for any smooth compactly supported $f:\G \to \C$ the Fourier series expansion
\begin{align} \label{eq:fouriertypeexpansion}
    f(\g) = \sum_{\substack{\ell_1,\ell_2 \in \Z \\ \ell_1 \equiv \ell_2 \pmod{2}}}  (\mathcal{F}_{\ell_1,\ell_2} f)(\g).
\end{align}

Let $\ell \in \Z$ and $\nu \in \C$. The basic functions (defined using the Iwasawa coordinates)
\begin{align} \label{eq:phibasicdefinition}
    \phi_{\ell}(\g,\nu):=y^{1/2+\nu} e^{ i\ell \theta}
\end{align}
 are  eigenfunctions of the Casimir operator $\Omega$ on $\G$ of right-type $\ell$ with spectral parameter $\nu$, that is
\begin{align*}
    \Omega   \phi_{\ell}(\g,\nu) = \Bigl(\frac{1}{4}-\nu^2\Bigr) \phi_{\ell}(\g,\nu).
\end{align*}
For $\ell_1 \equiv \ell_2 \pmod{2}$ we define their projections to a fixed left-type as (recall \eqref{eq:cartaninverserho})
\begin{align} \label{eq:2pitopi}
    \phi_{\ell_1,\ell_2}(\g,\nu) &:= \frac{1}{2 \pi} \int_0^{2\pi}   \phi_{\ell_2}(\k[\varphi]\g,\nu) e^{-i\ell_1 \varphi} \d \varphi = \frac{1}{ \pi} \int_0^{\pi}   \phi_{\ell_2}(\k[\varphi]\g,\nu) e^{-i\ell_1 \varphi} \d \varphi. 
\end{align}

\subsection{Spectrum of $L^2(\Gamma\backslash \G,\chi)$}
We denote by $\mathfrak{C}(\Gamma,\chi)$ a set of inequivalent representatives for the singular cusps of $\Gamma \setminus \G$ for the character $\chi$. We let $\kappa \in \{0,1\}$ denote the parity of $\chi$, that is, $\chi(\smqty(-1 & \\ & -1)) = (-1)^\kappa$. The notion of singular cusps and scaling  matrices coincide, using the identification $\G/\mathrm{K} \cong \H$, with the upper half-plane setting (see \cite[Section 4.1]{Drappeau}, for instance). 
 Let $ \phi_{\ell}(g,\nu)$ be as  in \eqref{eq:phibasicdefinition} and suppose that $\ell \equiv \kappa \pmod{2}$. For $\mathfrak{c}\in \mathfrak{C}(\Gamma,\chi)$, we define the Eisenstein series for $\R(\nu)>1/2$ by
\begin{align*}
    E_{\mathfrak{c}}^{(\ell)}(\g,\nu)=\sum_{\gamma\in \Gamma_{\mathfrak{c}}\setminus \Gamma } \overline{\chi}(\gamma) \phi_\ell (\sigma_{\mathfrak{c}}^{-1} \gamma \g, \nu),
\end{align*}
where $\Gamma_{\mathfrak{c}}$ is the stabilising group and $\sigma_{\mathfrak{c}}$ a scaling matrix of the cusp $\mathfrak{c}$. The Eisenstein series are extended to $\nu \in \C$ by meromorphic continuation, and have only one simple pole at $\nu=1/2$ for congruence subgroups $\Gamma$ (cf. for instance, \cite[Theorem 11.3]{IwaniecBook}).
\begin{proposition}\label{prop:specdecompL2}\emph{(Spectral expansion of $L^2(\Gamma \backslash \G,\chi)$)}. Let $\Gamma$ be a congruence subgroup and $\chi$ be a group character for $\Gamma$ of parity $\kappa \in \{0,1\}$. There exists a countable set  $\mathcal{B}(\Gamma,\chi) $ and for $V \in \mathcal{B}(\Gamma,\chi),\ell\in \Z, \ell \equiv \kappa \pmod{2}$, complex numbers $\nu_V$ and smooth functions $\varphi^{(\ell)}_V \in L^2(\Gamma \backslash \G,\chi)$ of right-type $\ell$ with \begin{align}\label{eq:varphieigenfuncOmega}
        \Omega \varphi^{(\ell)}_V=\bigg(\frac{1}{4}-\nu_V^2\bigg)\varphi^{(\ell)}_V
    \end{align} 
    such that the following holds. For any smooth and bounded $f\in L^2(\Gamma \backslash \G,\chi)$ with $\Omega f$ bounded we have
    \begin{align*}
        f(\g)= &\mathbf{1}_{\chi \,\mathrm{principal}}\frac{1}{|\Gamma \backslash G|} \langle f ,1 \rangle_{\Gamma \backslash G} +\sum_{V \in \mathcal{B}(\Gamma,\chi)} \sum_{\ell\equiv \kappa \pmod{2}}\langle f ,\varphi_V^{(\ell)} \rangle_{\Gamma \backslash G}\,  \varphi_V^{(\ell)}(\g)\\
        &+\sum_{\mathfrak{c}\in \mathfrak{C}(\Gamma,\chi)} \sum_{\ell\equiv \kappa \pmod{2}} \frac{1}{4 \pi i} \int_{(0)} \langle f, E^{(\ell)}_{\mathfrak{c}}(\cdot,\nu) \rangle_{\Gamma \backslash G} \, E^{(\ell)}_{\mathfrak{c}}(\g,\nu) \d \nu.
    \end{align*}
\end{proposition}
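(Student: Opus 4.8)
The plan is to reduce this to the classical spectral theory of the weight-$\ell$ hyperbolic Laplacian with multiplier, carried out $\K$-type by $\K$-type in the right variable, and then to repackage the pieces into a decomposition indexed by $(\mathfrak g,\K)$-modules using the raising and lowering operators $\mathrm{e}^{\pm}$; much of this is recorded in \cite[\S2--3]{GM}. Concretely, one first splits $f$ by its right $\K$-type: by \eqref{eq:fouriertypeexpansion} we may write $f=\sum_\ell f_\ell$ with $f_\ell$ of right-type $\ell$, and since $\k[\pi]=\smqty(-1 & \\ & -1)$ and $\chi(\smqty(-1 & \\ & -1))=(-1)^\kappa$, the component $f_\ell$ vanishes unless $\ell\equiv\kappa\pmod 2$. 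Each $f_\ell$ is again smooth, bounded, in $L^2(\Gamma\backslash\G,\chi)$ with $\Omega f_\ell$ bounded, and a right-type-$\ell$ function on $\G$ corresponds canonically to a weight-$\ell$ automorphic function on $\Gamma\backslash\H$ with multiplier $\chi$; under this identification $\Omega$ becomes (a constant shift of) the weight-$\ell$ Laplacian, consistently with \eqref{eq:casimiriwasawa} and \eqref{eq:phibasicdefinition}, since $\phi_\ell(\g,\nu)$ corresponds to the standard weight-$\ell$ power function with eigenvalue $1/4-\nu^2$.

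Next I would invoke the classical spectral decomposition of $L^2(\Gamma\backslash\H,\chi,\ell)$ for congruence $\Gamma$ (see \cite[Ch.~4, 7, 11]{IwaniecBook} for $\ell=0$ and \cite[\S4]{DFIartin} in general): a discrete part with an orthonormal eigenbasis and a continuous part equal to a finite sum of contour integrals over $\Re(\nu)=0$ of the Eisenstein series $E_\mathfrak{c}^{(\ell)}(\cdot,\nu)$, $\mathfrak{c}\in\mathfrak{C}(\Gamma,\chi)$. For congruence $\Gamma$ these Eisenstein series are holomorphic on $\Re(\nu)\geq 0$ apart from a single simple pole at $\nu=1/2$, whose residue recovers the constant function; this residual contribution lies in $L^2$ exactly when $\chi$ is principal and $\ell=0$, producing the term $\mathbf 1_{\chi\,\mathrm{principal}}|\Gamma\backslash\G|^{-1}\langle f,1\rangle_{\Gamma\backslash\G}$.

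It then remains to assemble the $\ell$-by-$\ell$ decompositions into the stated form indexed by $V\in\mathcal{B}(\Gamma,\chi)$, and to address convergence. By \eqref{eq:diffopcommutator} and \eqref{eq:Omegalrcommute} the operators $\mathrm{e}^{\pm}$ commute with $\Omega$ and shift the right $\K$-type by $\pm 2$, so from a single discrete eigenfunction of type $\ell$ and eigenvalue $1/4-\nu_V^2$ repeated application of $\mathrm{e}^{\pm}$ generates a coherent family $\{\varphi_V^{(\ell)}\}_{\ell\equiv\kappa\pmod 2}$ spanning an irreducible $(\mathfrak g,\K)$-submodule; one labels these submodules by $V$, records the common spectral parameter $\nu_V$, normalises appropriately, and recovers the discrete sum in the statement, while the same manoeuvre glues the $E_\mathfrak{c}^{(\ell)}$ for varying $\ell$ (common $\nu$) into the single continuous term. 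For convergence under the hypotheses ($f$ and $\Omega f$ bounded, $f$ smooth in $L^2$) I would run a Sobolev-type argument: applying powers of $1+\Omega$ gives $\sum_{V,\ell}(1+|\nu_V|)^{4}|\langle f,\varphi_V^{(\ell)}\rangle_{\Gamma\backslash\G}|^2<\infty$ together with the Eisenstein analogue, and combined with polynomial-in-$(\nu,\ell)$ sup-norm bounds for $\varphi_V^{(\ell)}$ and $E_\mathfrak{c}^{(\ell)}$ this yields absolute and locally uniform convergence of the expansion.

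The main obstacle here is the amount of classical machinery that must be cited rather than reproved --- discreteness of the cuspidal spectrum (compactness of the resolvent on the cuspidal subspace) and the meromorphic continuation together with spectral completeness of the Eisenstein series. Beyond those inputs, the one step needing genuine care is the passage from the per-$\K$-type scalar statements to the representation-indexed form via $\mathrm{e}^{\pm}$: one must check that raising or lowering never annihilates a cusp form in the range at issue, which holds because for positive $\Omega$-eigenvalue this can only occur at the edge of a holomorphic discrete series, and such representations do not contribute to the $L^2$ cuspidal spectrum considered here.
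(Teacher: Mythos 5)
Your overall route is the same as the paper's: the paper proves this proposition by splitting into right $\K$-types, identifying a right-type-$\ell$ function on $\G$ with a weight-$\ell$ automorphic form on $\H$ via $\varphi_V^{(\ell)}(\n[x]\a[y]\k[\theta])=u_j(x+iy)e^{i\ell\theta}$, and quoting the weight-$\ell$ spectral decomposition of \cite[Section 4]{DFIartin} (with the same remark for general congruence subgroups); your reassembly of the per-$\ell$ decompositions into representation-indexed families via $\mathrm{e}^{\pm}$ is exactly how the paper organises $\mathcal{B}(\Gamma,\chi)$ immediately after the statement, and your Sobolev-type convergence remark is consistent with the hypotheses ($f$, $\Omega f$ bounded).

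However, your justification of the reassembly step contains a false claim: you assert that lowering/raising can only annihilate a cusp form at the edge of a holomorphic discrete series and that ``such representations do not contribute to the $L^2$ cuspidal spectrum considered here.'' They do contribute: the holomorphic and antiholomorphic discrete series of weight $k\geq 2$, $k\equiv\kappa\pmod{2}$, are precisely the classical holomorphic cusp forms embedded in $L^2(\Gamma\backslash\G,\chi)$, and the paper lists them as case (3) of $\mathcal{B}(\Gamma,\chi)$, with $\nu_V=(k-1)/2$ and with the edge vector $\varphi_V^{(\pm k)}$ annihilated by $\mathrm{e}^{\mp}$. This does not invalidate the decomposition: the $\K$-type ladder of such a $V$ is one-sided and terminates at $|\ell|=k$, so $V^{(\ell)}$ vanishes for the remaining types, which the statement accommodates (the $\varphi_V^{(\ell)}$ may be zero). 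But your argument as written would wrongly exclude the holomorphic spectrum, so the gluing step should instead be phrased as: each per-$\ell$ eigenfunction generates under $\mathrm{e}^{\pm}$ a ladder that is either two-sided (principal or complementary series, all $\ell\equiv\kappa\pmod{2}$) or one-sided terminating at the edge (discrete series), and the annihilation at the edge is the expected behaviour rather than an obstruction to be ruled out.
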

Note that while the Eisenstein series are not square-integrable, for $f \in L^{2}(\Gamma \backslash \G,\chi)$ the inner product  $\langle f, E^{(\ell)}_{\mathfrak{c}}(*,\nu) \rangle_{\Gamma \backslash G} $ exists and is finite.
For $\Gamma= \Gamma_0(q)$  Proposition \ref{prop:specdecompL2} follows from \cite[Section 4]{DFIartin}, by identifying for a fixed $\ell$ the basis elements $u_j$ of weight $k_{\text{DFI}}=\ell$ in  \cite[(4.50))]{DFIartin} via
\begin{align*}
  \varphi_V^{(\ell)}(\n[x] \a[y] \k[\theta])=  u_j(x+iy) e^{i\ell \theta},
\end{align*}
and similarly for the Eisenstein series.  The proof for general congruence subgroups follows along similar lines.

We now describe the functions $\varphi_V^{(\ell)}$ in the discrete spectrum. The objects $V$ appearing are the irreducible subspaces of the cuspidal part of $L^2(\Gamma\backslash \G,\chi)$, and the Casimir operator is constant on each $V$, that is, there is some $\nu_V$ such that for all smooth $f \in V$ we have $\Omega f = (\frac{1}{4}-\nu_V^2) f$. Each $V$ splits into subspaces according to the right-action by $\mathrm{K}$
\begin{align*}
    V = \bigoplus_{\ell \equiv \kappa \pmod{2}} V^{(\ell)},
\end{align*}
where $V^{(\ell)}$ is a one or zero dimensional subspace consisting of functions of right-type $\ell$. The raising and lowering operators define maps $\mathrm{e}^{\pm}: V^{(\ell)} \to  V^{(\ell \pm 2)}$. We pick generators $\varphi^{(\ell)}_V$ for each $V^{(\ell)}$ such that
\begin{align*}
    \langle \varphi^{(\ell)}_V, \varphi^{(\ell)}_V \rangle_{\Gamma \backslash G} =1
\end{align*}
to get an orthonormal basis.
 
The cuspidal spectrum $\mathcal{B}(\Gamma,\chi)$ splits into three parts
\begin{enumerate}
    \item Principal series: $\nu_V \in i \R$
    \item Exceptional spectrum: $\nu_V \in (0,1/2)$
    \item Discrete series: $\nu_V =  \frac{k-1}{2}, \quad k \geq 2, \quad k \equiv \kappa \pmod{2}$.
    \end{enumerate}
We call the union of the principal and discrete series the regular spectrum. 
 The Selberg eigenvalue conjecture states that the exceptional spectrum is empty. The best bound towards this is by Kim-Sarnak \cite{KimSarnak}, which states that for congruence subgroups we have $\nu_V \in (0,7/64)$ in the possible exceptional spectrum. 
 
 In the first two cases the $V^{(\ell)}$ are non-trivial for all $\ell \in \Z$. For the discrete series $\nu_V = (k-1)/2$ we call $k$ the \emph{weight} and we have either
\begin{align*}
    V = \bigoplus_{\substack{\ell  \geq k\\ \ell \equiv \kappa\pmod{2}}} V^{(\ell)} \quad \text{or} \quad   V= \bigoplus_{\substack{\ell  \leq - k\\  \ell \equiv \kappa\pmod{2}}} V^{(\ell)}.
\end{align*}
In either case the edge function $\varphi_V^{(\pm k)}$ is annihilated by the lowering/raising operator, that is, $\mathrm{e}^{\mp} \varphi_V^{(\pm k)} = 0$.

\subsection{Hecke operators}

For $h$ coprime to $q$ we then define the Hecke operator acting on $f:\G \to \C$ is defined by
\begin{align} \label{eq:heckeopdef}
    \mathcal{T}_h f(g) :=  \frac{1}{\sqrt{h}} \sum_{ad=h} \chi(a) \sum_{b  \pmod{d}} f\bigg(\frac{1}{\sqrt{h}}\mqty(a & b  \\ & d)g\bigg).
\end{align} 
Then for $\gcd(h,q)=1$ the Hecke operators $\mathcal{T}_h$ commute with each other and $\Omega$,  and furthermore are normal (since $\langle \mathcal{T}_h f,g \rangle = \langle  f, \overline{\chi}(h)\mathcal{T}_h g \rangle $), so that we can choose a common orthonormal basis. Then 
\begin{align}\label{eq:heckecusp}
    \mathcal{T}_h \varphi_V^{(\ell)} (\tau) = \lambda_V(h) \varphi_V^{(\ell)} (\tau)
\end{align}
for the Hecke eigenvalues $\lambda_V(h)$. Similarly, for the Eisenstein series we have for $\gcd(h,q)=1$ \cite[(6.16)]{DFIartin}
\begin{align}\label{eq:heckeeis}
    \mathcal{T}_h E_{\mathfrak{c}}^{(\ell)} (\tau,\nu) = \lambda_{\mathfrak{c},\nu}(h) E_{\mathfrak{c}}^{(\ell)} (\tau,\nu)
\end{align}
with $\lambda_{\mathfrak{c},\nu}(h) $ given explicitly in \cite[(6.17)]{DFIartin}. 
We have  $|\lambda_{\mathfrak{c},\nu}(h)| \leq d(h)  \ll_\eps |h|^{o(1)}.$ The Ramanujan-Petersson conjecture states that also in the cuspidal spectrum
\begin{align*}
 |\lambda_{V}(h)| \ll |h|^{o(1)}.
\end{align*}
For the discrete series, this was proved by Deligne \cite{Deligne1968-1969}. In general the best bound towards this is $ \ll |h|^{\vartheta+o(1)}$ with $\vartheta\leq 7/64$ by Kim and Sarnak \cite{KimSarnak}. 

We have the Rankin-Selberg bound, which gives the Ramanujan-Peterson conjecture on average (cf.  for instance, \cite[Theorem 8.3]{IwaniecBook} for $\Gamma$ of level $q$)
    \begin{align} \label{eq:rankinselberg}
        \sum_{\substack{h \leq H \\ (h,q)=1}} |\lambda_V(h)|^2 \leq ((1+|\nu_V|)q)^{o(1)} H.
    \end{align}

\subsection{Unitary normalisation of $\phi_{\ell_1,\ell_2}(\g,\nu)$} 
In this subsection we define the functions $\mathcal{P}^{(\ell_1,\ell_2)}_{\nu}$ (see  \eqref{eq:unitarynorm}). They are normalised versions of $\phi_{\ell_1,\ell_2}(\cdot,\nu)$ and will appear in the form \eqref{eq:ratioPphi} in Section \ref{sec:specautokern} in the proof of the spectral expansion of an automorphic kernel. As will be seen in Lemma \ref{le:orthogonpol}, for the discrete series they correspond to the unitarily normalised harmonics of the ambient space $\G=\SL_2(\R)$.

In the notations of \cite[(4.57), (4.58)]{DFIartin} we have the correspondence 
\begin{align}\label{eq:DFIcorrespondence}
    \varphi_V^{(\ell)}(\n[x] \n[y] ) = u_{j \ell}(x+iy) \quad \text{or} \quad u_{j k \ell}(x+iy),
\end{align}
where $j$ parametrises the $V$ that appear in the spectrum and $k$ denotes the minimal type of $V$ in the case of the discrete series. Then by \cite[Corollary 4.4]{DFIartin} we can choose the orthonormal basis so that for $k=k_V$ and $\ell \geq k, \ell \equiv k \pmod{2}$
\begin{align} \label{eq:raisingvarphi}
     \varphi_V^{(\pm \ell)} = G(\nu_V,\ell) (e^{\pm})^{(\ell-k)/2}\varphi_V^{\pm k},
\end{align}
with (see  \cite[eq. (4.59), (4.60)]{DFIartin})
\begin{align*}
    G(\nu,\ell) = \begin{dcases}
        2^{-(\ell-\kappa)/2} \bigg| \frac{\Gamma(\nu +\frac{1+\kappa}{2}) \Gamma(-\nu + \frac{1+\kappa}{2})}{\Gamma(\nu+\frac{1+\ell}{2}) \Gamma(-\nu+\frac{1+\ell}{2})} \bigg|^{1/2}, \quad &\Re(\nu) \in [0,1/2) \\
        2^{-(\ell-k)/2}\bigg|\frac{\Gamma(k)}{\Gamma(\frac{\ell+k}{2})\Gamma(\frac{\ell-k}{2}+1)} \bigg|^{1/2}, \quad &\nu= \frac{k-1}{2}.
    \end{dcases}
\end{align*}

For the basic functions given by \eqref{eq:phibasicdefinition}, we have have for any $\ell \geq \kappa$ with $\ell \equiv \kappa \pmod{2}$ by induction
\begin{align} \label{eq:raisingphi}
\phi_{\pm  \ell}(g,\nu)  := G_\phi(\nu,\ell) (\mathrm{e}^{\pm})^{ (\ell-\kappa)/2} \phi_{\pm \kappa}(g,\nu), \quad \quad   G_\phi(\nu,\ell) := 2^{-(\ell-\kappa)/2}\frac{ \Gamma(\frac{1+\kappa}{2}+\nu)} {\Gamma(\frac{1+\ell}{2}+\nu)}.
\end{align}
Then for $\nu \in i \R$ we have $|G(\nu,\ell)| = |G_\phi(\nu,\ell)|$. 

For any even $\mu >0$ and $\ell \geq 0$ (whenever $\varphi_V^{(\pm \ell)}\neq 0$)
\begin{align} \label{eq:upanddown}
\begin{split}
      \frac{(\mathrm{e}^{\mp})^{\mu/2}   (\mathrm{e}^{\pm})^{\mu/2} \varphi_V^{(\pm \ell)}(\g)}{\varphi_V^{(\pm \ell)}(\g)}   =&     \frac{(\mathrm{e}^{\mp})^{\mu/2}   (\mathrm{e}^{\pm})^{\mu/2} \phi_{\pm \ell}(\g,\nu_V)}{\phi_{\pm \ell}(\g,\nu_V)},
\end{split}
\end{align}
which can be seen from  \eqref{eq:diffopcommutator} and \eqref{eq:casimirdef} by induction.

For $|\ell_1|,|\ell_2| \geq k_\nu$ we define the functions  (whenever $\varphi^{(\ell_1)}_V\varphi^{(\ell_2)}_V \neq 0$)
\begin{align} \label{eq:unitarynorm}
\mathcal{P}^{(\ell_1,\ell_2)}_{\nu}(\g) &:= \frac{G_\phi(\nu,|\ell_1|)}{G_{\phi}(\nu,|\ell_2|)} \frac{G(\nu,|\ell_2|)}{G(\nu,|\ell_1|)} \phi_{\ell_1,\ell_2}(\g, \nu)
\end{align}
Denoting $\mathrm{e}^m=(\mathrm{e}^{\sgn(m)})^m$,  by \eqref{eq:raisingvarphi} we see that
\begin{align} \label{eq:Gtwoells}
G(V,\ell_1,\ell_2) :=  \frac{ \varphi_{V}^{(\ell_2)}(\g)}{ \mathrm{e}^{(\ell_2-\ell_1)/2}   \varphi_{V}^{(\ell_1)}(\g)}, \quad G_\phi(\nu,\ell_1,\ell_2) :=  \frac{ \phi_{\ell_2}(\g,\nu)}{ \mathrm{e}^{(\ell_2-\ell_1)/2}   \phi_{\ell_1}(\g,\nu)}   
\end{align}
do not depend on $\g$. We claim that (whenever $\varphi^{(\ell_1)}_V\varphi^{(\ell_2)}_V \neq 0$)
\begin{align} \label{eq:ratioPphi}
\mathcal{P}^{(\ell_1,\ell_2)}_{\nu_V}(\g) =& \frac{G(V,\ell_1,\ell_2) }{G_\phi(\nu,\ell_1,\ell_2)} \phi_{\ell_1,\ell_2}(\g, \nu_V).
\end{align}
To see this, we substitute  \eqref{eq:raisingphi} and \eqref{eq:raisingvarphi}  to get (note that one part involves $\kappa$, the other one $k$)
\begin{align*}
\frac{\mathrm{e}^{(\ell_2-\ell_1)/2} \phi_{\ell_1}(\cdot,\nu_V)}{\phi_{\ell_2}(\cdot,\nu_V) } & \frac{\varphi^{(\ell_2)}_V(\cdot)}{\mathrm{e}^{(\ell_2-\ell_1)/2}\varphi^{(\ell_1)}_V(\cdot) } \\
    &= \frac{G_\phi(\nu_V,|\ell_1|) }{G_\phi(\nu_V,|\ell_2|)} \frac{G(\nu_V,|\ell_2|)}{G(\nu_V,|\ell_1|) }\frac{\mathrm{e}^{(\ell_2-\ell_1)/2}  \mathrm{e}^{(\ell_1-\kappa)/2} \phi_{\kappa}(\cdot,\nu_V)} {\mathrm{e}^{(\ell_2-\kappa)/2}\phi_{\kappa}(\cdot,\nu_V)} \frac{\mathrm{e}^{(\ell_2-k)/2}\varphi_{V}^{(k)}(\cdot)}{\mathrm{e}^{(\ell_2-\ell_1)/2}  \mathrm{e}^{(\ell_1-k)/2} \varphi_{V}^{(k)}(\cdot)}.
\end{align*}
If $|\ell_2| \geq |\ell_1|$ and $\sgn(\ell_1)=\sgn(\ell_2)$, then we can combine $\mathrm{e}^{(\ell_2-\ell_1)/2}  \mathrm{e}^{(\ell_1-\kappa)/2} = \mathrm{e}^{(\ell_2-\kappa)/2}$ and $\mathrm{e}^{(\ell_2-\ell_1)/2}  \mathrm{e}^{(\ell_1-k)/2} = \mathrm{e}^{(\ell_2-k)/2}$ to get \eqref{eq:ratioPphi}. If $|\ell_2|< |\ell_1|$ and $\sgn(\ell_1)=\sgn(\ell_2)$,  we can write  \begin{align*} 
\mathrm{e}^{(\ell_2-\ell_1)/2}  \mathrm{e}^{(\ell_1-\kappa)/2} = \mathrm{e}^{(\ell_2-\ell_1)/2}  \mathrm{e}^{(\ell_1-\ell_2)/2} \mathrm{e}^{(\ell_2-\kappa)/2}, \\
\mathrm{e}^{(\ell_2-\ell_1)/2}  \mathrm{e}^{(\ell_1-k)/2} = \mathrm{e}^{(\ell_2-\ell_1)/2}  \mathrm{e}^{(\ell_1-\ell_2)/2} \mathrm{e}^{(\ell_2-k)/2},
\end{align*}
and use \eqref{eq:upanddown} with $\mu=|\ell_1-\ell_2|$ to see that again \eqref{eq:ratioPphi} holds. Finally, if $\sgn(\ell_2) \neq \sgn(\ell_1)$, then we can write
 \begin{align*} 
\mathrm{e}^{(\ell_2-\ell_1)/2}  \mathrm{e}^{(\ell_1-\kappa)/2} = \mathrm{e}^{(\ell_2-\kappa)/2}  \mathrm{e}^{(\kappa-\ell_1)/2} \mathrm{e}^{(\ell_1-\kappa)/2},
\end{align*}
and use \eqref{eq:upanddown} with $\mu=|\ell_1-\kappa|$ to see that again \eqref{eq:ratioPphi} holds, recalling that the case $\sgn(\ell_2) \neq \sgn(\ell_1)$ only occurs for $k_V=\kappa$.

Finally, we calculate for $\ell_1,\ell_2 \geq 0$ 
\begin{align} \label{eq:Gfactor}
\begin{split}
     \bigg| \frac{G_\phi(\nu,\ell_1) }{G_\phi(\nu,\ell_2)} \frac{G(\nu,\ell_2)}{G(\nu,\ell_1) } \bigg| & = \bigg| \frac{\Gamma(\nu + \frac{1+\ell_2}{2})\Gamma(-\nu + \frac{1+\ell_1}{2})}{\Gamma(\nu + \frac{1+\ell_1}{2})\Gamma(-\nu + \frac{1+\ell_2}{2})}\bigg|^{1/2}.
\end{split}
\end{align}
For $\nu\in i\R$
\begin{align}\label{eq:Gfactorregular}
     \bigg| \frac{G_\phi(\nu,\ell_1) }{G_\phi(\nu,\ell_2)} \frac{G(\nu,\ell_2)}{G(
    \nu,\ell_1) } \bigg| =1,
\end{align}
for $\nu= \frac{k-1}{2}$ and $\ell_1,\ell_2 \geq k$ we get
\begin{align}\label{eq:Gfactorholo}
     \bigg| \frac{G_\phi(\nu,\ell_1) }{G_\phi(\nu,\ell_2)} \frac{G(\nu,\ell_2)}{G(\nu,\ell_1) } \bigg| = \bigg( \frac{\Gamma( \frac{k+\ell_2}{2})\Gamma(1+ \frac{\ell_1-k}{2})}{\Gamma( \frac{k+\ell_1}{2})\Gamma(1+ \frac{\ell_2-k}{2})}\bigg)^{1/2},
\end{align}
and for $\nu \in [0,1/2)$ we have
\begin{align}\label{eq:Gfactorexcep}
     \bigg| \frac{G_\phi(\nu,\ell_1) }{G_\phi(\nu,\ell_2)} \frac{G(\nu,\ell_2)}{G(\nu,\ell_1) } \bigg| \asymp (1+\ell_1)^{2\nu}(1+\ell_2)^{2\nu}.
\end{align}

\section{Spectral expansion of an automorphic kernel} \label{sec:specautokern}
The proof of Theorem \ref{thm:mainblackbox} is based on applying the spectral expansion of an automorphic kernel, the Cauchy Schwarz inequality, and reversal of the spectral expansion. In particular, we need to be able to do this for general kernel functions involving different left and right types. 

For a smooth compactly supported $F:\G\to\C$, congruence subgroup $\Gamma$,  and a  group character $\chi$ of parity $\kappa \in \{0,1\}$, recall the construction of an automorphic kernel
\begin{align*}
    ( \mathcal{K} F)(\tau_1,\tau_2) =     ( \mathcal{K}_{\Gamma,\chi} F)(\tau_1,\tau_2) = \sum_{\gamma \in \Gamma} \overline{\chi}(\gamma)  F(\tau_1^{-1} \gamma \tau_2).
\end{align*}
Using the Fourier expansion \eqref{eq:fouriertypeexpansion} we can write
\begin{align*}
    ( \mathcal{K} F)(\tau_1,\tau_2) = \sum_{\substack{\ell_1,\ell_2 \in \Z \\ \ell_1 \equiv \ell_2  \pmod{2}}}   ( \mathcal{K} \mathcal{F}_{\ell_1,\ell_2}F)(\tau_1,\tau_2). 
\end{align*}
For any $\gamma \in \Gamma$ we have
\begin{align*}
   ( \mathcal{K} F)(\tau_1,\gamma\tau_2) = \chi(\gamma) ( \mathcal{K} F)(\tau_1,\tau_2) =   ( \mathcal{K} F)(\gamma^{-1}\tau_1,\tau_2).
\end{align*}
Thus, for $\ell_1 \equiv \ell_2 \not \equiv \kappa \pmod{2}$ we see by the action of $\k[\pi] =\smqty(-1 & \\ & -1) \in \Gamma$ with $\chi(\smqty(-1 & \\ & -1)) = (-1)^\kappa$ that
\begin{align*}
 ( \mathcal{K} \mathcal{F}_{\ell_1,\ell_2}F)(\tau_1,\tau_2) =  & \frac{1}{2} \bigg( ( \mathcal{K} \mathcal{F}_{\ell_1,\ell_2}F)(\tau_1,\tau_2)  +  (-1)^\kappa  ( \mathcal{K} \mathcal{F}_{\ell_1,\ell_2}F)(\smqty(-1 & \\ & -1) \tau_1,\tau_2))\bigg) \\
    =& \frac{1}{2} \bigg( ( \mathcal{K} \mathcal{F}_{\ell_1,\ell_2}F)(\tau_1,\tau_2)  -  (-1)^{2\kappa}  ( \mathcal{K} \mathcal{F}_{\ell_1,\ell_2}F)(\tau_1,\tau_2)\bigg)  =0.
\end{align*}
Hence, the Fourier expansion can be restricted to $\ell_1 \equiv \ell_2 \equiv \kappa \pmod{2}$
\begin{align*}
         ( \mathcal{K} F)(\tau_1,\tau_2)  =  \sum_{\substack{\ell_1,\ell_2 \in \Z \\ \ell_1 \equiv \ell_2 \equiv \kappa \pmod{2}}}  (\mathcal{K} \mathcal{F}_{\ell_1,\ell_2}F)(\tau_1,\tau_2).
\end{align*}
The main result of this section is the following spectral expansion for $\mathcal{K} \mathcal{F}_{\ell_1,\ell_2}F$ that involves the functions $\mathcal{P}_\nu^{(\ell_1,\ell_2)}$ defined in \eqref{eq:unitarynorm}. By abuse of notation we will denote $\PP_V^{(\ell_1,\ell_2)} = \PP_{\nu_V}^{(\ell_1,\ell_2)}$.
\begin{proposition} \label{prop:kernelexpansion}
Let $\Gamma$ be a congruence subgroup and $\chi$ be a group character for $\Gamma$ of parity $\kappa \in \{0,1\}$.    Let $F:\G \to \C$ be smooth and compactly supported. Then for $\ell_1\equiv \ell_2 \equiv \kappa \pmod{2}$
    \begin{align*}
 (\mathcal{K} \mathcal{F}_{\ell_1,\ell_2}F)(\tau_1,\tau_2) =&  \frac{\mathbf{1}_{\chi \,\mathrm{principal}} \mathbf{1}_{\ell_1=\ell_2=0}}{|\Gamma \backslash \G|} \int_\G F(g) \d g \\&+ \sum_{V\in \mathcal{B}(\Gamma,\chi)} \langle F, \mathcal{P}^{(\ell_1,\ell_2)}_{V}\rangle_{\G} \overline{\varphi_V^{(\ell_1)}(\tau_1)}\varphi_V^{(\ell_2)}(\tau_2) \\
 &+\sum_{\mathfrak{c}\in \mathfrak{C}(\Gamma,\chi)} \frac{1}{4 \pi i}  \int_{(0)} \langle F, \mathcal{P}^{(\ell_1,\ell_2)}_{\nu}\rangle_{\G} \overline{E^{(\ell_1)}_{\mathfrak{c}}(\tau_1,\nu)}E^{(\ell_2)}_{\mathfrak{c}}(\tau_2,\nu) \d \nu.
    \end{align*}
\end{proposition}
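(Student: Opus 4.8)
\emph{Strategy.} This is the pretrace formula for $\mathcal K$ with prescribed left and right $\K$-types, and I would prove it by fixing $\tau_2$, spectrally expanding $\tau_1\mapsto(\mathcal K\mathcal F_{\ell_1,\ell_2}F)(\tau_1,\tau_2)$ on $\Gamma\backslash\G$, and evaluating the resulting coefficients by unfolding the sum over $\Gamma$. Since $F$ is smooth and compactly supported and $\Gamma$ is a lattice, for each fixed $\tau_2$ the defining sum is locally finite, so $(\mathcal K\mathcal F_{\ell_1,\ell_2}F)(\cdot,\tau_2)$ is smooth and bounded on the finite-volume quotient $\Gamma\backslash\G$, and so is $\Omega$ applied to it, namely $(\mathcal K\,\Omega\mathcal F_{\ell_1,\ell_2}F)(\cdot,\tau_2)$. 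From $(\mathcal K F)(\gamma\tau_1,\tau_2)=\overline{\chi}(\gamma)(\mathcal K F)(\tau_1,\tau_2)$ this function lies in $L^2(\Gamma\backslash\G,\overline\chi)$, it has right $\K$-type $-\ell_1$ in $\tau_1$ and right $\K$-type $\ell_2$ in $\tau_2$. I would apply Proposition~\ref{prop:specdecompL2} with the character $\overline\chi$, taking its orthonormal basis to be $\{\overline{\varphi_V^{(\ell)}}\}_{V\in\B(\Gamma,\chi)}$ together with the Eisenstein series $\overline{E^{(\ell)}_{\mathfrak c}(\cdot,\nu)}$. Since distinct right-$\K$-isotypic components are orthogonal, only the terms of right $\K$-type $-\ell_1$ survive in the $\tau_1$-expansion (and the constant term only when $\ell_1=0$).

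\emph{Unfolding the coefficients.} Using $\overline\chi(\gamma)\varphi_V^{(\ell_1)}(\tau_1)=\varphi_V^{(\ell_1)}(\gamma^{-1}\tau_1)$ and unfolding the $\Gamma$-sum against a fundamental domain,
\[
\big\langle (\mathcal K\mathcal F_{\ell_1,\ell_2}F)(\cdot,\tau_2),\,\overline{\varphi_V^{(\ell_1)}}\big\rangle_{\Gamma\backslash\G}
=\int_\G F(\mu^{-1}\tau_2)\,\varphi_V^{(\ell_1)}(\mu)\,\d \mu
=\big(R_{\check F}\,\varphi_V^{(\ell_1)}\big)(\tau_2),
\]
where $\check F(g)=F(g^{-1})$ and $R_{\check F}\psi:=\int_\G \check F(g)\,(r_g\psi)\,\d g$ with $r_g$ as in \eqref{eq:lrdef}; the same computation gives $\mathbf 1_{\chi\,\mathrm{principal}}\,|\Gamma\backslash\G|^{-1}\int_\G F(g)\,\d g$ for the constant coefficient (which vanishes unless $\ell_2=0$, as $F$ then has nontrivial right $\K$-type) and $(R_{\check F}\,E^{(\ell_1)}_{\mathfrak c}(\cdot,\nu))(\tau_2)$ for the Eisenstein term. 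Now $R_{\check F}$ commutes with $\Omega$ by \eqref{eq:Omegalrcommute}, and since each cuspidal $V$ is a closed subspace of $L^2(\Gamma\backslash\G,\chi)$ stable under the right translations $r_g$, it preserves $V$; thus $R_{\check F}\varphi_V^{(\ell_1)}\in V$ is an $\Omega$-eigenfunction of eigenvalue $\tfrac14-\nu_V^2$. Because $\check F$ is of pure bi-$\K$-type $(-\ell_2,-\ell_1)$, averaging over $\K\times\K$ shows that $R_{\check F}$ kills every vector of right $\K$-type $\neq\ell_1$ and carries right $\K$-type $\ell_1$ to right $\K$-type $\ell_2$; as the right-$\K$-isotypic subspaces of $V$ are at most one-dimensional, $R_{\check F}\varphi_V^{(\ell_1)}=c_V\,\varphi_V^{(\ell_2)}$ for a scalar $c_V$ (the term being absent when $\varphi_V^{(\ell_1)}\varphi_V^{(\ell_2)}\equiv0$, which can happen for the discrete series).

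\emph{Identifying the scalar, and the Eisenstein part.} Pairing with the unit vector $\varphi_V^{(\ell_2)}$ and unfolding once more,
\[
c_V=\big\langle R_{\check F}\varphi_V^{(\ell_1)},\,\varphi_V^{(\ell_2)}\big\rangle_{\Gamma\backslash\G}
=\int_\G F(g)\,\overline{\big\langle r_g\varphi_V^{(\ell_2)},\,\varphi_V^{(\ell_1)}\big\rangle_{\Gamma\backslash\G}}\,\d g.
\]
It remains to identify $g\mapsto\langle r_g\varphi_V^{(\ell_2)},\varphi_V^{(\ell_1)}\rangle_{\Gamma\backslash\G}$ with $\PP_V^{(\ell_1,\ell_2)}(g)$: both are smooth on $\G$, satisfy $\Omega(\cdot)=(\tfrac14-\nu_V^2)(\cdot)$, and have left $\K$-type $\ell_1$ and right $\K$-type $\ell_2$; applying $\mathrm{e}^{\pm}$ to the two arguments of the matrix coefficient and invoking \eqref{eq:raisingvarphi}, \eqref{eq:raisingphi}, \eqref{eq:upanddown} reduces the comparison to the base case in which $\ell_1$ and $\ell_2$ both equal the minimal $\K$-type of $V$, where the matrix coefficient of a unit extreme-weight vector is the correspondingly normalised $\phi_{\ell_1,\ell_2}(\cdot,\nu_V)$ (equal to $1$ at the identity); the discrepancy in general is exactly the normalisation ratio of \eqref{eq:unitarynorm}, i.e.\ the identity \eqref{eq:ratioPphi}. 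Hence $c_V=\langle F,\PP_V^{(\ell_1,\ell_2)}\rangle_\G$. The Eisenstein coefficients are treated identically, applying the above argument to the closed $R_{\check F}$-invariant continuous part of $L^2(\Gamma\backslash\G,\overline\chi)$, on which $R_{\check F}$ acts fibrewise, to obtain $(R_{\check F}\,E^{(\ell_1)}_{\mathfrak c}(\cdot,\nu))(\tau_2)=\langle F,\PP_\nu^{(\ell_1,\ell_2)}\rangle_\G\,E^{(\ell_2)}_{\mathfrak c}(\tau_2,\nu)$. Plugging the evaluated coefficients into the expansion from the first step, and noting that $\mathbf 1_{\ell_1=0}\int_\G F=\mathbf 1_{\ell_1=\ell_2=0}\int_\G F$, yields the stated formula; convergence of the spectral sum and integral is pointwise by the smoothness hypotheses of Proposition~\ref{prop:specdecompL2}.

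\emph{Where the difficulty lies.} The substantive step is the identification above of the genuine matrix coefficient of $V$ with the explicitly normalised generalised spherical function $\PP_V^{(\ell_1,\ell_2)}$. For $\ell_1\neq\ell_2$ both eigenfunctions vanish on $\K$, so it does not suffice to match values at the identity, and one must carefully propagate the raising/lowering normalisations $G(\nu,\ell)$, $G_\phi(\nu,\ell)$ — this is the purpose of \eqref{eq:ratioPphi} — while also keeping track of which right $\K$-types actually occur in $V$ for the discrete series. A secondary technical point is that the Eisenstein series are not square-integrable, so the clean ``$r_g V\subseteq V$'' argument must be replaced by the direct-integral decomposition of the continuous spectrum plus a truncation/convergence argument, and the bookkeeping relating the characters $\chi$ and $\overline\chi$ (and $\nu$ versus $\overline\nu$ on the critical line) has to be done carefully to land exactly on the displayed formula.
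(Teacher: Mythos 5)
Your proposal is correct in outline and follows the same overall skeleton as the paper — expand the kernel in one variable via Proposition \ref{prop:specdecompL2}, unfold the $\Gamma$-sum, and evaluate the resulting coefficients — but it routes the key coefficient evaluation differently. The paper isolates that step as Proposition \ref{prop:kerneleigen} and proves it concretely: write $\varphi_V^{(\ell_2)}=G(V,\ell_1,\ell_2)\,\mathrm{e}^{(\ell_2-\ell_1)/2}\varphi_V^{(\ell_1)}$, integrate by parts on $\G$ to move the raising/lowering operators onto the kernel, quote the equal-type case from \cite{GM}, move the operators back onto $\phi_{\ell_1,\ell_1}(\cdot,\nu_V)$, and let \eqref{eq:ratioPphi} supply the normalisation. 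You instead read the unfolded coefficient as a right-convolution operator $R_{\check F}$, use invariance of the cuspidal subspaces $V$ and one-dimensionality of the right-$\K$-isotypic lines to get scalar action, and compute the scalar as the pairing of $F$ against the automorphic matrix coefficient $\langle r_g\varphi_V^{(\ell_2)},\varphi_V^{(\ell_1)}\rangle_{\Gamma\backslash\G}$, which you then identify with $\PP_V^{(\ell_1,\ell_2)}(g)$; that identification is exactly where the same raising/lowering bookkeeping, the choice \eqref{eq:raisingvarphi}, and \eqref{eq:ratioPphi} re-enter, so the computational core is shared. What your version buys is a more structural explanation of why $\PP_V^{(\ell_1,\ell_2)}$ is the correctly normalised matrix coefficient (consistent with Lemma \ref{le:orthogonpol}); what the paper's version buys is that its argument is pointwise and elementary, and in particular applies verbatim to the Eisenstein series.

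Two spots in your sketch need filling in. First, the base-case claim that $\langle r_g\varphi_V^{(k)},\varphi_V^{(k)}\rangle_{\Gamma\backslash\G}=\phi_{k,k}(g,\nu_V)$ for the minimal type is asserted rather than proved: one must argue that a smooth function of bi-$\K$-type $(k,k)$, with the given Casimir eigenvalue, bounded near the identity and equal to $1$ there, is the regular solution (the second solution of the radial equation being singular at $u=0$), or compute the coefficient directly in a model of the representation. Second, and more substantively, the closed-invariant-subspace argument does not apply to the Eisenstein coefficients since $E^{(\ell)}_{\mathfrak c}(\cdot,\nu)\notin L^2$; you flag this, but the direct-integral repair you propose is heavier than necessary: the unfolded coefficient $(R_{\check F}E^{(\ell_1)}_{\mathfrak c}(\cdot,\nu))(\tau_2)$ is a pointwise convergent integral, and the paper's Proposition \ref{prop:kerneleigen}-style computation — using only that the Eisenstein series is an $\Omega$-eigenfunction of fixed right type, together with integration by parts on $\G$ — evaluates it directly. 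With these two points supplied, your argument is a valid alternative proof of the proposition.
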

The proof is based on the following result about invariant integral operators defined by the kernel $\mathcal{F}_{\ell_1,\ell_2} F$.

\begin{proposition} \label{prop:kerneleigen}
 Whenever $\varphi^{(\ell_1)}_V \varphi_V^{(\ell_2)} \not \equiv 0$  we have
 \begin{align*}
     \int_{\G} (\mathcal{F}_{\ell_1,\ell_2} F) (\tau^{-1} \g) \overline{\varphi_{V}^{(\ell_2)}(\g)} d \g  =  \langle F, \mathcal{P}^{(\ell_1,\ell_2)}_{V}\rangle_\G \, \overline{\varphi_{V}^{(\ell_1)}(\tau)}.
 \end{align*} 
Similarly, we have
  \begin{align*}
     \int_{\G} (\mathcal{F}_{\ell_1,\ell_2} F) (\tau^{-1} \g) \overline{E^{(\ell_2)}_{\mathfrak{c}}(\g,\nu) } d \g  =  \langle F, \mathcal{P}^{(\ell_1,\ell_2)}_{\nu}\rangle_\G \, \overline{E^{(\ell_1)}_{\mathfrak{c}}(\tau,\nu) }.
 \end{align*} 
\end{proposition}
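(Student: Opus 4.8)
Write $\Phi:=\mathcal{F}_{\ell_1,\ell_2}F$ and, for the left-hand side, $I(\tau):=\int_\G \Phi(\tau^{-1}\g)\,\overline{\varphi_V^{(\ell_2)}(\g)}\,\d\g$. The functions $\mathcal{P}^{(\ell_1,\ell_2)}_{\nu}$ have left type $\ell_1$ and right type $\ell_2$ and $\mathcal{F}_{\ell_1,\ell_2}$ is a self-adjoint idempotent, so $\langle F,\mathcal{P}^{(\ell_1,\ell_2)}_{V}\rangle_\G=\langle\Phi,\mathcal{P}^{(\ell_1,\ell_2)}_{V}\rangle_\G$ and it suffices to work with $\Phi$. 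A change of variables gives $I(\gamma\tau)=\overline{\chi}(\gamma)I(\tau)$ for $\gamma\in\Gamma$ and $I(\tau\k[\theta])=e^{-i\ell_1\theta}I(\tau)$; moreover $I$ is smooth and bounded and $\Omega I$ is bounded, because $\varphi_V^{(\ell_2)}$ is cuspidal (hence bounded, with bounded Casimir image) and $\Phi$ is smooth and compactly supported. Thus $\overline{I}\in L^2(\Gamma\backslash\G,\chi)$ has right type $\ell_1$ and Proposition \ref{prop:specdecompL2} applies to it. (The hypothesis $\varphi_V^{(\ell_1)}\varphi_V^{(\ell_2)}\not\equiv 0$ guarantees $V^{(\ell_1)},V^{(\ell_2)}\neq 0$, so the normalised vectors used below exist; one can also check $\Omega I=(\tfrac14-\nu_V^2)I$ via \eqref{eq:omegaselfadjoint}, but this is not needed.)

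The key step is to evaluate the spectral coefficients of $\overline{I}$ by unfolding. For any automorphic form $\psi$ of right type $\ell_1$ for $\chi$ (a cusp form $\varphi_W^{(\ell_1)}$, an Eisenstein series $E^{(\ell_1)}_{\mathfrak{c}}(\cdot,\nu)$, or the constant function), folding the $\g$-integral into $\Gamma\backslash\G$, recognising $\sum_{\gamma\in\Gamma}\overline{\chi}(\gamma)\Phi(\tau^{-1}\gamma\g)=\mathcal{K}\Phi(\tau,\g)$, and then unfolding the $\tau$-integral against the automorphy of $\psi$ yields
\[ \langle I,\overline{\psi}\rangle_{\Gamma\backslash\G}=\langle \psi\ast\Phi,\varphi_V^{(\ell_2)}\rangle_{\Gamma\backslash\G}, \qquad (\psi\ast\Phi)(\g)=\int_\G\psi(\h)\Phi(\h^{-1}\g)\,\d\h. \]
Now $\psi\ast\Phi=\int_\G\Phi(\h)\,(r_{\h^{-1}}\psi)\,\d\h$ is a convergent superposition of right translates of $\psi$; it therefore lies in the right-$\G$-invariant subspace generated by $\psi$, and it has right type $\ell_2$. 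For $\psi=\varphi_W^{(\ell_1)}$ that subspace is $W$, so $\psi\ast\Phi\in W^{(\ell_2)}=\C\varphi_W^{(\ell_2)}$ and $\langle \psi\ast\Phi,\varphi_V^{(\ell_2)}\rangle=0$ unless $W=V$; for $\psi$ Eisenstein (resp.\ constant), $\psi\ast\Phi$ is a multiple of $E^{(\ell_2)}_{\mathfrak{c}}(\cdot,\nu)$ (resp.\ a constant), hence orthogonal to the cusp form $\varphi_V^{(\ell_2)}$. Hence only the $W=V$ term survives, and $I=d_V\,\overline{\varphi_V^{(\ell_1)}}$ where $d_V:=\langle\varphi_V^{(\ell_1)}\ast\Phi,\varphi_V^{(\ell_2)}\rangle_{\Gamma\backslash\G}$ (the Eisenstein and constant parts of the expansion of $\overline{I}$ vanishing by the same computation).

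It remains to identify $d_V$. Unwinding the convolution and using that right translation is unitary on $L^2(\Gamma\backslash\G,\chi)$,
\[ d_V=\int_\G \Phi(\h)\,\overline{\langle r_\h\varphi_V^{(\ell_2)},\varphi_V^{(\ell_1)}\rangle_{\Gamma\backslash\G}}\,\d\h=\langle \Phi,\mathcal{M}^{(\ell_1,\ell_2)}_V\rangle_\G, \qquad \mathcal{M}^{(\ell_1,\ell_2)}_V(\h):=\langle r_\h\varphi_V^{(\ell_2)},\varphi_V^{(\ell_1)}\rangle_{\Gamma\backslash\G}. \]
One checks $\mathcal{M}^{(\ell_1,\ell_2)}_V$ has left type $\ell_1$ and right type $\ell_2$: it is the matrix coefficient of the irreducible unitary module $V$ between its normalised $\K$-type vectors $\varphi_V^{(\ell_2)}$ and $\varphi_V^{(\ell_1)}$. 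On the other hand, \eqref{eq:2pitopi} exhibits $\phi_{\ell_1,\ell_2}(\h,\nu)$ as the matrix coefficient $\langle r_\h\phi_{\ell_2}(\cdot,\nu),\phi_{\ell_1}(\cdot,\nu)\rangle$ between the $\K$-type vectors of the representation with parameter $\nu$, realised in its induced model (for the $\K$-inner product). Since each $\K$-isotypic subspace of $V$ is at most one-dimensional and $V$ has spectral parameter $\nu_V$, the function $\mathcal{M}^{(\ell_1,\ell_2)}_V$ is a constant multiple of $\phi_{\ell_1,\ell_2}(\cdot,\nu_V)$; the constant is precisely the ratio appearing in \eqref{eq:unitarynorm}--\eqref{eq:ratioPphi}, obtained by expressing $\varphi_V^{(\ell_i)}$ and $\phi_{\ell_i}(\cdot,\nu_V)$ through the raising/lowering relations \eqref{eq:raisingvarphi}, \eqref{eq:raisingphi}, moving the operators $\mathrm{e}^{\pm}$ across the inner product by \eqref{eq:integrationbypartsraisinglowering}, and using \eqref{eq:upanddown}. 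Thus $\mathcal{M}^{(\ell_1,\ell_2)}_V=\mathcal{P}^{(\ell_1,\ell_2)}_V$ and $d_V=\langle F,\mathcal{P}^{(\ell_1,\ell_2)}_V\rangle_\G$, proving the cuspidal identity. The Eisenstein identity follows by the same scheme with $E^{(\ell)}_{\mathfrak{c}}(\cdot,\nu)$ in place of $\varphi_V^{(\ell)}$: for $\Re(\nu)$ large, where the defining series converge absolutely, one unfolds $\int_\G\Phi(\tau^{-1}\g)\overline{E^{(\ell_2)}_{\mathfrak{c}}(\g,\nu)}\,\d\g$ directly to reduce to the analogous statement for the model functions $\phi_\ell(\cdot,\nu)$, which is handled as above, and general $\nu$ follows by meromorphic continuation.

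The one genuinely delicate point is the identification $\mathcal{M}^{(\ell_1,\ell_2)}_V=\mathcal{P}^{(\ell_1,\ell_2)}_V$: matching the unitary matrix coefficient with the explicitly normalised $\mathcal{P}$ requires careful tracking of phases and, for the discrete series, of the non-trivial factor $|G/G_\phi|$ in \eqref{eq:Gfactorholo} --- this is exactly what the bookkeeping of the preceding subsection is designed for --- while the continuous-spectrum case additionally requires the customary care with non-$L^2$ automorphic forms and meromorphic continuation. Everything else (the unfolding, and the soft fact that $\psi\ast\Phi$ stays inside the right-$\G$-submodule generated by $\psi$) is routine.
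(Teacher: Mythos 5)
Your argument is correct in outline, but it takes a genuinely different and more self-contained route than the paper. The paper disposes of the diagonal case $\ell_1=\ell_2$ by citing \cite[Proposition 5.2]{GM} (where $\mathcal{P}^{(\ell,\ell)}_V=\phi_{\ell,\ell}(\cdot,\nu_V)$), and then reduces $\ell_1\neq\ell_2$ to that case purely by bookkeeping: it writes $\varphi_V^{(\ell_2)}=G(V,\ell_1,\ell_2)\,\mathrm{e}^{(\ell_2-\ell_1)/2}\varphi_V^{(\ell_1)}$ via \eqref{eq:Gtwoells}, moves the raising/lowering operators onto the kernel and then onto $\phi_{\ell_1,\ell_1}$ by the integration-by-parts formula \eqref{eq:integrationbypartsraisinglowering}, and invokes \eqref{eq:ratioPphi}. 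You instead re-prove the whole statement from scratch: spectral expansion of $\overline{I}$, unfolding to get $\langle I,\overline{\psi}\rangle=\langle\psi\ast\Phi,\varphi_V^{(\ell_2)}\rangle$, killing all terms except $W=V$ by invariance of the irreducible subspaces under right translation, and identifying the surviving coefficient as the matrix coefficient $\mathcal{M}^{(\ell_1,\ell_2)}_V(\h)=\langle r_\h\varphi_V^{(\ell_2)},\varphi_V^{(\ell_1)}\rangle$. This is essentially the proof of the imported \cite[Proposition 5.2]{GM} extended to mixed types, so your version buys independence from that reference at the cost of length; the paper's version buys brevity at the cost of leaning on the diagonal case as a black box. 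Both proofs ultimately hinge on the same normalisation identity \eqref{eq:ratioPphi}.

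Two points in your write-up are asserted rather than proved and deserve care. First, the identification $\mathcal{M}^{(\ell_1,\ell_2)}_V=\mathcal{P}^{(\ell_1,\ell_2)}_V$: the clean way to pin down the proportionality constant (both functions vanish at the identity when $\ell_1\neq\ell_2$, so evaluation at a point does not suffice) is to apply $\mathrm{e}^{(\ell_2-\ell_1)/2}$ in the $\h$-variable to the diagonal coefficient $\mathcal{M}^{(\ell_1,\ell_1)}_V=\phi_{\ell_1,\ell_1}(\cdot,\nu_V)$ and compare with \eqref{eq:Gtwoells} and \eqref{eq:ratioPphi} --- which is precisely the paper's integration-by-parts manipulation in disguise; and the diagonal base case itself still needs the uniqueness of the smooth $\Omega$-eigenfunction of fixed left and right type (the second solution of the relevant ODE in $u$ is singular at $u=0$). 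Second, the Eisenstein case: the claim that $E^{(\ell_1)}_{\mathfrak{c}}(\cdot,\nu)\ast\Phi$ is a multiple of $E^{(\ell_2)}_{\mathfrak{c}}(\cdot,\nu)$ is essentially the second assertion of the proposition, so as stated your orthogonality step is mildly circular; it should be proved first for the model functions $\phi_{\ell}(\cdot,\nu)$ (for $\Re(\nu)$ large, then by continuation) and pushed through the defining sum over $\Gamma_{\mathfrak{c}}\backslash\Gamma$. Neither issue is fatal, but both require the kind of explicit computation the paper's shorter argument already performs.
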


\begin{proof}
The second case with the Eisenstein series is similar so we only consider the first. For $\ell_1=\ell_2$ the result follows from  \cite[Proposition 5.2]{GM}, since for $\ell_1=\ell_2$ we have $\PP_V^{(\ell_1,\ell_2)}(\g) = \phi_{\ell_1,\ell_2}(\g,\nu_V)$. For $\ell_1\neq \ell_2$ we will reduce the proof to the case $\ell_1=\ell_2$. By \eqref{eq:Gtwoells} and by integration by parts  \eqref{eq:integrationbypartsraisinglowering}, we get
\begin{align*}
    \int_{\G} (\mathcal{F}_{\ell_1,\ell_2} F) (\tau^{-1} \g) \overline{\varphi_{V}^{(\ell_2)}(\g)} d \g &=  \overline{G(V,\ell_1,\ell_2)}    \int_{\G} (\mathcal{F}_{\ell_1,\ell_2} F) (\tau^{-1} \g) \overline{\mathrm{e}^{(\ell_2-\ell_1)/2}   \varphi_{V}^{(\ell_1)}(\g)} d \g  \\
   & = (-1)^{(\ell_2-\ell_1)/2} \overline{G(V,\ell_1,\ell_2) }    \int_{\G} \mathrm{e}^{(\ell_1-\ell_2)/2}(\mathcal{F}_{\ell_1,\ell_2} F) (\tau^{-1} \g) \overline{   \varphi_{V}^{(\ell_1)}(\g)} d \g
\end{align*}
The function $\mathrm{e}^{(\ell_1-\ell_2)/2}(\mathcal{F}_{\ell_1,\ell_2} F)  $ is of the same left and right type $\ell_1$. Therefore, applying the case $(\ell_1,\ell_1)$ we get
\begin{align*}
     \int_{\G} (\mathcal{F}_{\ell_1,\ell_2} F) (\tau^{-1} \g) \overline{\varphi_{V}^{(\ell_2)}(\g)} d \g  =  (-1)^{(\ell_2-\ell_1)/2} \overline{G(V,\ell_1,\ell_2) }   \langle \mathrm{e}^{(\ell_1-\ell_2)/2}(\mathcal{F}_{\ell_1,\ell_2} F) , \phi_{\ell_1,\ell_1}(\cdot,\nu_V)\rangle_\G \overline{\varphi_{V}^{(\ell_1)}(\tau)}
\end{align*}
Another application of integration by parts \eqref{eq:integrationbypartsraisinglowering} gives us now
\begin{align*}
     \int_{\G} (\mathcal{F}_{\ell_1,\ell_2} F) (\tau^{-1} \g) \overline{\varphi_{V}^{(\ell_2)}(\g)} d \g  =&  \overline{G(V,\ell_1,\ell_2) }   \langle (\mathcal{F}_{\ell_1,\ell_2} F) , \mathrm{e}^{(\ell_2-\ell_1)/2}\phi_{\ell_1,\ell_1}(\cdot,\nu_V)\rangle_\G \overline{\varphi_{V}^{(\ell_1)}(\tau)} \\
     =&  \overline{\Bigl(\frac{G(V,\ell_1,\ell_2) }{G_\phi(\nu_V,\ell_1,\ell_2)} \Bigr)}  \langle (\mathcal{F}_{\ell_1,\ell_2} F) , \phi_{\ell_1,\ell_2}(\cdot,\nu_V)\rangle_\G \overline{\varphi_{V}^{(\ell_1)}(\tau)}\\
     =&\langle F, \mathcal{P}^{(\ell_1,\ell_2)}_{V}\rangle_\G \overline{\varphi_{V}^{(\ell_1)}(\tau)},
\end{align*}
where the last equality holds by \eqref{eq:ratioPphi}. Here we were able to drop the projection operator $\mathcal{F}_{\ell_1,\ell_2}$ in the last step, since the projection is automatic by the Cartan decomposition and the fact that $\mathcal{P}^{(\ell_1,\ell_2)}$ is of left type $\ell_1$ and right type $\ell_2$.
\end{proof}

\subsection{Proof of Proposition \ref{prop:kernelexpansion}}
Proposition \ref{prop:kernelexpansion} now follows from Propositions \ref{prop:specdecompL2}  and \ref{prop:kerneleigen}  by exactly the same argument as \cite[Proposition 5.1]{GM} follows from \cite[Proposition 5.2]{GM}.
\qed

\section{Estimates for harmonics on $\G$}
In this section we provide some basic estimates for the functions $\PP_\nu^{(\ell_1,\ell_2)}$. We begin with the following explicit representation.
\begin{lemma}\label{lem:phirewritten}
For $\ell_1 \equiv \ell_2 \pmod{2}$ and $ \cosh(\varrho) = 2u+1$  we have
    \begin{align*}
       \phi_{\ell_1,\ell_2}(\a_u,\nu) &= \frac{1}{ 2\pi } \int_0^{2\pi } (2u+1 + 2\sqrt{u(u+1)} \cos \varphi)^{-1/2-\nu}  
 \bigg(\frac{\sqrt{1+1/u} + e^{-i\varphi}}{\sqrt{1+1/u} + e^{i\varphi}}\bigg)^{\ell_2/2} e^{i\frac{\ell_2-\ell_1}{2} \varphi } \d \varphi\\
 &= \frac{1}{2 \pi } \int_0^{2\pi} \Bigl(\cosh \frac{\varrho}{2}+\sinh \frac{\varrho}{2} e^{i \varphi}\Bigr)^{-1/2-\nu-\ell_2/2}\Bigl(\cosh \frac{\varrho}{2}+\sinh \frac{\varrho}{2} e^{-i\varphi}\Bigr)^{-1/2-\nu+\ell_2/2} e^{i\frac{\ell_2-\ell_1}{2} \varphi } \d \varphi.
    \end{align*}
    \end{lemma}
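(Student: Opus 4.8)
\emph{Proof proposal.} The plan is to unfold the half-range form of the projection \eqref{eq:2pitopi}, insert the Iwasawa data of $\k[\varphi]\a_u$ supplied by Lemma \ref{le:thetaphiSU}, and then rescale the angular variable.

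First I would write, using \eqref{eq:2pitopi} (valid for $\ell_1\equiv\ell_2\pmod 2$),
\[
\phi_{\ell_1,\ell_2}(\a_u,\nu) = \frac{1}{\pi}\int_0^{\pi} \phi_{\ell_2}(\k[\varphi]\a_u,\nu)\, e^{-i\ell_1\varphi}\,\d\varphi ,
\]
and then compute the Iwasawa decomposition $\k[\varphi]\a_u = \n[x]\a[y]\k[\theta]$. Multiplying $\k[\varphi]$ and $\a_u = \a[e^{-\varrho}]$ directly gives bottom row $(c,d) = (-e^{-\varrho/2}\sin\varphi,\ e^{\varrho/2}\cos\varphi)$, so by \eqref{eq:matrixtoiwasawa} and elementary trigonometry
\[
y = \frac{1}{c^2+d^2} = \frac{1}{\cosh\varrho + \cos(2\varphi)\sinh\varrho} = \bigl(2u+1 + 2\sqrt{u(u+1)}\cos(2\varphi)\bigr)^{-1},
\]
using $\cosh\varrho = 2u+1$, $\sinh\varrho = 2\sqrt{u(u+1)}$, while $e^{i\theta}$ is given by Lemma \ref{le:thetaphiSU}. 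Since $\phi_{\ell_2}(\g,\nu) = y^{1/2+\nu}e^{i\ell_2\theta}$ and $e^{i\ell_2\theta} = (e^{i\theta})^{\ell_2} = e^{i\ell_2\varphi}\bigl(\tfrac{\cosh(\varrho/2) + e^{-i2\varphi}\sinh(\varrho/2)}{\cosh(\varrho/2) + e^{i2\varphi}\sinh(\varrho/2)}\bigr)^{\ell_2/2}$, combining $e^{i\ell_2\varphi}$ with $e^{-i\ell_1\varphi}$ produces an integrand that is a function of $2\varphi$ times $e^{i(\ell_2-\ell_1)\varphi}$.

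Next I would substitute $\psi = 2\varphi$, which turns $\tfrac1\pi\int_0^\pi$ into $\tfrac1{2\pi}\int_0^{2\pi}$, replaces $2\varphi$ by $\psi$, and replaces $e^{i(\ell_2-\ell_1)\varphi}$ by $e^{i\frac{\ell_2-\ell_1}{2}\psi}$, which is single-valued because $\ell_1\equiv\ell_2\pmod 2$. After renaming $\psi$ back to $\varphi$, dividing numerator and denominator of the middle factor by $\sinh(\varrho/2)>0$, and using $\coth(\varrho/2)=\sqrt{1+1/u}$ (which follows from $\cosh^2(\varrho/2)=u+1$, $\sinh^2(\varrho/2)=u$), this is precisely the first displayed formula. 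For the second formula I would use the identity
\[
\bigl(\cosh\tfrac\varrho2 + \sinh\tfrac\varrho2 e^{i\varphi}\bigr)\bigl(\cosh\tfrac\varrho2 + \sinh\tfrac\varrho2 e^{-i\varphi}\bigr) = \cosh^2\tfrac\varrho2 + \sinh^2\tfrac\varrho2 + 2\cosh\tfrac\varrho2\sinh\tfrac\varrho2\cos\varphi = 2u+1 + 2\sqrt{u(u+1)}\cos\varphi,
\]
which lets me split $(2u+1+2\sqrt{u(u+1)}\cos\varphi)^{-1/2-\nu}$ across $z_+ := \cosh\tfrac\varrho2 + \sinh\tfrac\varrho2 e^{i\varphi}$ and $z_- := \overline{z_+} = \cosh\tfrac\varrho2 + \sinh\tfrac\varrho2 e^{-i\varphi}$, and absorb the ratio factor $(z_-/z_+)^{\ell_2/2}$, collecting the exponents into $-\tfrac12-\nu-\tfrac{\ell_2}{2}$ on $z_+$ and $-\tfrac12-\nu+\tfrac{\ell_2}{2}$ on $z_-$.

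I expect the branch bookkeeping for the fractional powers (when $\ell_2$ is odd) to be the main, though routine, technical point. The key observation is $\Re z_\pm = \cosh\tfrac\varrho2 + \sinh\tfrac\varrho2\cos\varphi \geq \cosh\tfrac\varrho2 - \sinh\tfrac\varrho2 = e^{-\varrho/2} > 0$, so $z_+,z_-$ lie in the open right half-plane with arguments in $(-\pi/2,\pi/2)$; hence $z_+z_-$ is positive real with $\arg z_+ + \arg z_- = 0$, while $z_-/z_+$ has modulus $1$ and argument in $(-\pi,\pi)$, thus avoids the cut $(-\infty,0]$. Consequently the principal-branch rules $(z_+z_-)^w = z_+^w z_-^w$ and $(z_-/z_+)^w = z_-^w z_+^{-w}$ hold for every $w\in\C$, in particular for $w=\ell_2/2$, and they are consistent with the branch convention of Lemma \ref{le:thetaphiSU} ($1^{1/2}=1$, cut along $(-\infty,0]$). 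Carrying this through yields the second formula; the rest is bookkeeping with the Iwasawa--Cartan dictionary already established in the excerpt.
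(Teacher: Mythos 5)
Your proposal is correct and follows essentially the same route as the paper: unfold the projection \eqref{eq:2pitopi}, insert $y$ from \eqref{eq:matrixtoiwasawa} and $e^{i\theta}$ from Lemma \ref{le:thetaphiSU}, rescale $\varphi\mapsto\varphi/2$, and pass to the second display via $\cosh\varrho=2u+1$. Your extra care with the branch bookkeeping (using $\Re z_\pm>0$) is a welcome elaboration of what the paper leaves implicit, but it is not a different argument.
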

\begin{proof}

By \eqref{eq:2pitopi} we have
\begin{align*}
    \phi_{\ell_1,\ell_2}(\a_u,\nu) =  \frac{1}{ \pi} \int_0^{\pi}   \phi_{\ell_2}(\k[\varphi]\a_u,\nu) e^{-i\ell_1 \varphi} \d \varphi. 
\end{align*}
Writing $\k[\varphi]\a[e^{-\varrho}]=\n[x]\a[y]\k[\theta]$ we have by Lemma \ref{le:thetaphiSU}
\begin{align*}
    e^{i\theta}=e^{ i\varphi}\bigg(\frac{\sqrt{1+1/u} + e^{-i2\varphi}}{\sqrt{1+1/u} + e^{i2\varphi}}\bigg)^{1/2}
\end{align*}
and by \eqref{eq:matrixtoiwasawa} we have
\begin{align*}
     y&= \frac{1}{c^2+d^2}= \frac{1}{\cosh(\varrho) + \sinh (\varrho) \cos 2 \varphi}=\frac{1}{2u+1 + 2\sqrt{u(u+1)} \cos 2 \varphi}.
\end{align*}
By the definition of $\phi_\ell(\g,\nu)$ in Iwasawa coordinates \eqref{eq:phibasicdefinition} and the change of variables $\varphi \mapsto \varphi/2$, the first equality follows. The second equality follows by $\cosh(\varrho) = 2u+1$. 
\end{proof}

We have the following simple  uniform upper bound.
\begin{lemma} \label{le:phitrivialbound}
    For $\Re \nu  \in [0,1/2) $ we have
\begin{align*}
      \phi_{\ell_1,\ell_2}(\a_u,\nu)  \ll \min \{  \log (1+u)  , (\Re\nu)^{-1}\} (1+u)^{-1/2 + \Re \nu}.
\end{align*}
\end{lemma}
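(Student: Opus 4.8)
The plan is to start from the integral representation in Lemma \ref{lem:phirewritten}, namely
\begin{align*}
  \phi_{\ell_1,\ell_2}(\a_u,\nu) = \frac{1}{2\pi}\int_0^{2\pi} \bigl(\cosh\tfrac{\varrho}{2}+\sinh\tfrac{\varrho}{2}e^{i\varphi}\bigr)^{-1/2-\nu-\ell_2/2}\bigl(\cosh\tfrac{\varrho}{2}+\sinh\tfrac{\varrho}{2}e^{-i\varphi}\bigr)^{-1/2-\nu+\ell_2/2} e^{i\frac{\ell_2-\ell_1}{2}\varphi}\,\d\varphi,
\end{align*}
and bound the integrand in absolute value. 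Since the two factors have conjugate bases, the product of their moduli is $\bigl|\cosh\tfrac{\varrho}{2}+\sinh\tfrac{\varrho}{2}e^{i\varphi}\bigr|^{-1-2\Re\nu}$ (the $\pm\ell_2/2$ exponents contribute only a phase to each factor's modulus up to the real-part combining to $-1-2\Re\nu$; one should check the modulus of $w^{-1/2-\nu-\ell_2/2}\bar w^{-1/2-\nu+\ell_2/2}$ for $w=\cosh\tfrac{\varrho}{2}+\sinh\tfrac{\varrho}{2}e^{i\varphi}$ equals $|w|^{-1-2\Re\nu}$, using that $\ell_2/2$ is real so $|w^{-\ell_2/2}\bar w^{\ell_2/2}|=1$). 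Hence
\begin{align*}
  |\phi_{\ell_1,\ell_2}(\a_u,\nu)| \leq \frac{1}{2\pi}\int_0^{2\pi}\bigl|\cosh\tfrac{\varrho}{2}+\sinh\tfrac{\varrho}{2}e^{i\varphi}\bigr|^{-1-2\Re\nu}\,\d\varphi,
\end{align*}
which is completely independent of $\ell_1,\ell_2$ — this is the key simplification.

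Next I would compute the modulus explicitly: writing $c=\cosh\tfrac{\varrho}{2}$, $s=\sinh\tfrac{\varrho}{2}$ (so $c^2-s^2=1$ and $c^2+s^2=\cosh\varrho=2u+1$, $2cs=\sinh\varrho=2\sqrt{u(u+1)}$), we get $|c+se^{i\varphi}|^2 = c^2+s^2+2cs\cos\varphi = (2u+1)+2\sqrt{u(u+1)}\cos\varphi$. This matches the first displayed form in Lemma \ref{lem:phirewritten}. The quantity ranges between $(c-s)^2 = e^{-\varrho}$ at $\varphi=\pi$ and $(c+s)^2=e^{\varrho}$ at $\varphi=0$, and near $\varphi=\pi$ it behaves like $e^{-\varrho}+cs(\varphi-\pi)^2 \asymp e^{-\varrho} + \sqrt{u(u+1)}(\varphi-\pi)^2$. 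So the task reduces to estimating
\begin{align*}
  I(\sigma) := \int_0^{2\pi}\bigl((2u+1)+2\sqrt{u(u+1)}\cos\varphi\bigr)^{-1/2-\sigma}\,\d\varphi, \qquad \sigma=\Re\nu\in[0,1/2).
\end{align*}

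For the estimate of $I(\sigma)$: the integrand is bounded by $(c+s)^{-1-2\sigma}\le e^{-\varrho(1/2+\sigma)}$ times a correction, but the main contribution is from $\varphi$ near $\pi$ where the base is smallest. Substituting $\varphi = \pi+\psi$ and using the local expansion, $I(\sigma) \ll \int_{-\pi}^{\pi}\bigl(e^{-\varrho}+ u^{1/2}(u+1)^{1/2}\psi^2\bigr)^{-1/2-\sigma}\,\d\psi$ (valid since $\cos(\pi+\psi) \le -1 + \psi^2/2$ everywhere, which only makes the base smaller, hence the integrand larger — so this is a legitimate upper bound up to constants after also handling the region away from $\pi$ separately where the base is $\gg u^{1/2}$ and contributes $\ll u^{-1/2-\sigma}$). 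Evaluating the model integral by scaling $\psi = e^{-\varrho/2}(u(u+1))^{-1/4}t$: one finds $I(\sigma) \ll e^{-\varrho\sigma}(u(u+1))^{-1/4}\cdot e^{-\varrho/2}\int (1+t^2)^{-1/2-\sigma}\,\d t$ after rescaling, and since $(u(u+1))^{-1/4}e^{-\varrho/2} \asymp (1+u)^{-1/2}$ (as $e^{\varrho}\asymp 1+u$ and $u(u+1)\asymp (1+u)^2$ for $u$ bounded away from issues, with care at $u\to 0$) while $e^{-\varrho\sigma}\asymp (1+u)^{-\sigma}$, we get $I(\sigma)\ll (1+u)^{-1/2-\sigma}\int_{\R}(1+t^2)^{-1/2-\sigma}\,\d t$. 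The remaining $t$-integral is $\ll \min\{\log(1/\sigma+2), \sigma^{-1}\}$ — actually the cleaner bound: it is $\asymp \sigma^{-1}$ for $\sigma$ small but also, cutting at $|t|\le (1+u)$, it is $\ll \log(2+u)$; combining gives the stated $\min\{\log(1+u),(\Re\nu)^{-1}\}$.

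The main obstacle I anticipate is tracking the behaviour uniformly as $u\to 0^+$ (where $\sqrt{u(u+1)}\to 0$ and the "peak near $\varphi=\pi$" degenerates, so the Laplace-type scaling argument must be replaced by the trivial bound $\phi_{\ell_1,\ell_2}(\a_u,\nu)\ll 1 \asymp (1+u)^{-1/2+\Re\nu}$ for $u\asymp 1$) and simultaneously as $\sigma=\Re\nu\to 0^+$, where the two competing bounds $\log(1+u)$ and $\sigma^{-1}$ must be shown to be the right envelope. Concretely, when $\sigma^{-1} \gg \log(1+u)$, i.e. $u$ large compared to $e^{1/\sigma}$, the $t$-integral should be truncated at the scale where the original $\varphi$-variable is order $1$ (equivalently $|t|\asymp (1+u)^{1/2}$ or so), giving the $\log$ saving; when $u$ is moderate, the full real-line integral $\int(1+t^2)^{-1/2-\sigma}\,\d t \asymp \sigma^{-1}$ is used. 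Keeping the implied constants uniform across this crossover, and handling the $u\lesssim 1$ regime by the trivial bound, is the only delicate point; everything else is a direct modulus estimate on the explicit integral representation. A cleaner route avoiding casework: bound $((2u+1)+2\sqrt{u(u+1)}\cos\varphi)^{-1/2-\sigma} \le ((2u+1)+2\sqrt{u(u+1)}\cos\varphi)^{-1/2}\cdot (c-s)^{-2\sigma} = (\cdots)^{-1/2} e^{\varrho\sigma}$ is too lossy; instead interpolate $(\cdots)^{-1/2-\sigma}\le (\cdots)^{-1/2}\cdot \max_\varphi(\cdots)^{-\sigma}$ is also lossy, so the scaling argument above seems unavoidable but is routine once set up.
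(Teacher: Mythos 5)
Your approach is essentially the paper's: take absolute values in the integral representation of Lemma \ref{lem:phirewritten} (so that all dependence on $\ell_1,\ell_2$ disappears, exactly as you observe), and then estimate the resulting integral via the quadratic behaviour of the base $e^{-\varrho}+2\sqrt{u(u+1)}(1-\cos\psi)$ near $\varphi=\pi$, which yields $(1+u)^{-1/2+\Re\nu}$ together with the factor $\min\{\log(1+u),(\Re\nu)^{-1}\}$ from the near-peak integral. Two local slips should be repaired, though neither changes the viability of the method. First, to bound the integrand from \emph{above} you need a \emph{lower} bound on the base, i.e. $1-\cos\psi\gg\psi^2$ on $[-\pi,\pi]$ (the paper uses $\cos\varphi\ge\min\{-1+\tfrac14(\varphi-\pi)^2,0\}$); the Taylor inequality $\cos(\pi+\psi)\le-1+\psi^2/2$ that you invoke goes the wrong way (it makes the base larger and the integrand smaller), and your parenthetical justification states the direction backwards -- the step survives only because the comparability is two-sided with absolute constants. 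Second, the scaling step contains a bookkeeping error: the substitution $\psi=e^{-\varrho/2}(u(u+1))^{-1/4}t$ produces the prefactor $e^{+\varrho\sigma}(u(u+1))^{-1/4}\asymp(1+u)^{-1/2+\sigma}$, not $e^{-\varrho\sigma}$ times an extra $e^{-\varrho/2}$; in particular your intermediate claim $I(\sigma)\ll(1+u)^{-1/2-\sigma}\int(1+t^2)^{-1/2-\sigma}\,\d t$ is false (the peak alone contributes $\asymp(1+u)^{-1/2+\sigma}$), and the corrected exponent $-1/2+\sigma$ is precisely what the lemma asserts. With these two corrections, and the truncation of the $t$-integral at $|t|\ll 1+u$ to obtain the logarithmic alternative, your argument coincides with the paper's proof; your explicit treatment of the $u\lesssim 1$ range by the trivial bound is fine and is, if anything, more careful than the paper on that point.
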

\begin{proof}
    By Lemma \ref{lem:phirewritten} and the triangle inequality
\begin{align*}
 | \phi_{\ell_1,\ell_2}(\a_u,\nu)| &\leq  \int_0^{2\pi } (2u+1 + 2\sqrt{u(u+1)} \cos \varphi)^{-1/2-\Re\nu}  
\d \varphi \\
&= (2u+1)^{-1/2-\Re\nu} \int_0^{2\pi } (1 + \sqrt{1-1/(2u+1)^2} \cos \varphi)^{-1/2-\Re\nu} 
\d \varphi. 
\end{align*}
By using the lower bound $  \cos(\varphi) \geq \min\{-1+\frac{1}{4}(\varphi-\pi)^2,0\}$,
we have
\begin{align*}
    \int_0^{2\pi } (1 + \sqrt{1-1/(2u+1)^2}& \cos \varphi)^{-1/2-\Re\nu} 
\d \varphi \\
\leq &\int_0^{2\pi } (1 + \sqrt{1-1/(2u+1)^2}  \cdot 0)^{-1/2-\Re\nu}   
\d \varphi \\
&+ \int_0^{2\pi } (1 + \sqrt{1-1/(2u+1)^2} (-1+\tfrac{1}{4}(\varphi-\pi)^2))^{-1/2-\Re\nu}   
\d \varphi.
\end{align*}
The first integral is $\ll 1$, which is sufficiently small. By using the upper bound $  \sqrt{1-1/(2u+1)^2} \leq   1-\frac{1}{4(2u+1)^2},$ the second integral is
\begin{align*}
    &\ll  \int_0^{\pi } (\tfrac{1}{4(2u+1)^2} +\tfrac{1}{4}(\varphi-\pi)^2)^{-1/2-\Re\nu}   \d \varphi \ll \min \{  \log (1+u)  , (\Re\nu)^{-1}\} (1+u)^{2 \Re \nu},
\end{align*}
matching the proposed bound.
\end{proof}

For the exceptional spectrum and the case of $\ell_1=\ell_2$ we need the following more precise version, which also gives a lower bound. The precise dependency  on $\ell$ is not critical for us as long as it is polynomial.

\begin{lemma}\label{lem:simplephiasymp}
Let $\nu \in (1/(\eta \log u),1/2-\eta]$ for some small $\eta>0$. Then for $u > 1/\eta$ and any $\ell \in \Z$
\begin{align*}
      \phi_{\ell,\ell}(\a_u,\nu)  \asymp  \begin{dcases}
          \frac{ u^{-1/2+\nu}}{\nu(1+|\ell|^{2\nu})} \Bigl(1+O\Bigl(\frac{1+\ell^{6}}{u^{1/2}} +   u^{-\nu/10}\Bigr)\Bigr), \quad \ell \text{ even} \\
            \frac{ u^{-1/2+\nu}}{(1+|\ell|^{2\nu})} \Bigl(1+O\Bigl(\frac{1+\ell^{6}}{u^{1/2}} +   \nu^{-1} u^{-\nu/10}\Bigr)\Bigr), \quad \ell \text{ odd}.
      \end{dcases}  
\end{align*}
\end{lemma}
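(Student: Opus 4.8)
The plan is to work from the integral representation in Lemma \ref{lem:phirewritten} with $\ell_1=\ell_2=\ell$, so that
\begin{align*}
\phi_{\ell,\ell}(\a_u,\nu) = \frac{1}{2\pi}\int_0^{2\pi} \Bigl(2u+1+2\sqrt{u(u+1)}\cos\varphi\Bigr)^{-1/2-\nu}\Bigl(\frac{\sqrt{1+1/u}+e^{-i\varphi}}{\sqrt{1+1/u}+e^{i\varphi}}\Bigr)^{\ell/2}\d\varphi,
\end{align*}
and extract the main term by Laplace's method. The integrand's modulus is $(2u+1+2\sqrt{u(u+1)}\cos\varphi)^{-1/2-\Re\nu}$ (since the second factor has modulus $1$ when $\nu$ is real), which for large $u$ is sharply peaked at $\varphi=\pi$. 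First I would substitute $\varphi=\pi+\psi$, write $2u+1+2\sqrt{u(u+1)}\cos\varphi = 2u+1-2\sqrt{u(u+1)}\cos\psi$, and note that near $\psi=0$ this is $\approx 1 + (\text{something})\psi^2$ up to lower order — more precisely $2u+1-2\sqrt{u(u+1)} + \sqrt{u(u+1)}\psi^2 + O(u\psi^4)$, and $2u+1-2\sqrt{u(u+1)} = (\sqrt{u+1}-\sqrt u)^2 \asymp 1/u$. So after scaling $\psi = t/\sqrt u$ the factor $(2u+1+2\sqrt{u(u+1)}\cos\varphi)^{-1/2-\nu} \asymp u^{-1/2-\nu}(\tfrac14 u^{-1} + t^2)^{-1/2-\nu}\cdot$(corrections), and the effective range of $t$ is $|t|\lesssim u^{\nu/5}$ or so, beyond which the tail is negligible by the bound in Lemma \ref{le:phitrivialbound}.

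Next I would handle the phase factor $\bigl((\sqrt{1+1/u}+e^{-i\varphi})/(\sqrt{1+1/u}+e^{i\varphi})\bigr)^{\ell/2}$. Writing $\varphi=\pi+\psi$ this is $\bigl((\sqrt{1+1/u}-e^{-i\psi})/(\sqrt{1+1/u}-e^{i\psi})\bigr)^{\ell/2}$; the numerator and denominator are complex conjugates, so this is $e^{i\ell\Theta(\psi)}$ for a real phase $\Theta$ with $\Theta(0)=0$ (by the branch normalization $1^{1/2}=1$), $\Theta(-\psi)=-\Theta(\psi)$. For $\psi = t/\sqrt u$ small one computes $\sqrt{1+1/u}-e^{\pm i\psi} \approx (1/(2u) \mp it + t^2/(2u) + \dots)$, giving $\Theta(\psi) \approx \arg(1/(2u)-it/\sqrt u\cdot\sqrt u^{-1}\dots)$ — essentially $\Theta(t/\sqrt u)$ tends to $\mp\pi/2$ (a jump) as $u\to\infty$ for $t\neq 0$ fixed, because the real parts shrink like $1/u$ while the imaginary parts are $\asymp t/\sqrt u$. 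This is the source of the $(1+|\ell|^{2\nu})^{-1}$ and the parity dichotomy: after scaling, the phase factor becomes $\approx e^{\mp i\pi\ell/2} = (\mp i)^\ell$ on the two halves $t\gtrless 0$, so for $\ell$ even it is $(-1)^{\ell/2}$ independent of sign of $t$ (a constant, pulled out), while for $\ell$ odd it is $\pm i(-1)^{(\ell-1)/2}$ and the two halves interfere. The cleanest route is to keep the exact $\Theta(t/\sqrt u)$, expand $e^{i\ell\Theta}$, and reduce to evaluating $\int_{\R}(\tfrac14 u^{-1}+t^2)^{-1/2-\nu} e^{i\ell\Theta(t/\sqrt u)}\,\d t$.

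The core computation is then this one-dimensional integral. For $\ell$ even I would argue $e^{i\ell\Theta}$ is close to the constant $(-1)^{\ell/2}$ on a neighborhood of $0$ contributing the bulk, and $\int_\R (\tfrac14 u^{-1}+t^2)^{-1/2-\nu}\d t = u^{-1/2}\cdot(\tfrac14)^{-\nu}\cdot 2\int_0^\infty (1+s^2)^{-1/2-\nu}\d s\cdot u^{\nu}$, wait — rescaling $t = s/(2\sqrt u)$ gives $(2\sqrt u)^{-1}\cdot(2\sqrt u)^{2\nu}\cdot(\tfrac14 u^{-1})^{-1/2-\nu}\dots$; tracking constants one gets a factor $\asymp u^{\nu}\cdot\nu^{-1}$ from $\int_0^\infty(1+s^2)^{-1/2-\nu}\d s = \tfrac12 B(\tfrac12,\nu)\asymp \nu^{-1}$ for small $\nu$. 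Combined with the prefactor $u^{-1/2-\nu}$ this yields $u^{-1/2+\nu}/\nu$, and the $\ell$-dependence $(1+|\ell|^{2\nu})^{-1}$ must come from the fact that when $|\ell|$ is large the phase $e^{i\ell\Theta(t/\sqrt u)}$ oscillates and effectively truncates the $t$-integral to $|t|\lesssim 1/|\ell|^{\text{something}}$ — one has to match this against the known factor $(1+|\ell|)^{2\nu}$ in \eqref{eq:Gfactorexcep}; so I expect the honest statement is that the relevant mass is $\int$ over $|t|$ up to where $\ell\Theta$ is $O(1)$, and since $\Theta(t/\sqrt u)$ saturates near $\pm\pi/2$ that integral behaves like $\int_{|t|\leq \text{const}}$ but with the phase making the $(1+\ell^{2\nu})$ denominator appear when expanding. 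For $\ell$ odd the absence of $\nu^{-1}$ reflects that the odd phase kills the logarithmically-divergent-in-$\nu$ part near $t=0$, leaving a convergent integral of size $\asymp 1$ instead of $\asymp\nu^{-1}$. The error terms $u^{-\nu/10}$ (or $\nu^{-1}u^{-\nu/10}$) come from truncating the $t$-range at $u^{\nu/5}$ and controlling the tail via Lemma \ref{le:phitrivialbound}; the $\ell^6 u^{-1/2}$ error comes from the quartic Taylor corrections to $\cos\psi$ and to $\Theta$, which carry factors of $\ell$ and are damped by $u^{-1/2}$ per order.

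\textbf{Main obstacle.} The delicate point is the uniform tracking of the $\ell$-dependence through the phase factor: extracting precisely $(1+|\ell|^{2\nu})^{-1}$ with the right implied constants, and cleanly separating the even/odd cases, requires carefully analyzing $\Theta(\psi)$ near $\psi=0$ and near the full range, and recognizing that the branch cut in Lemma \ref{le:thetaphiSU} forces $\Theta$ to jump by $\pi$ across $\psi=0$ in the $u\to\infty$ limit. I would expect this to be the part that needs the most care; the Laplace-method extraction of $u^{-1/2+\nu}/\nu$ itself, and the error bookkeeping via Lemma \ref{le:phitrivialbound}, should be comparatively routine once the phase is under control.
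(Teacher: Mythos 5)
Your overall strategy matches the paper's: localize the integral of Lemma \ref{lem:phirewritten} near $\varphi=\pi$, Taylor-expand both factors, control the tail by the argument behind Lemma \ref{le:phitrivialbound}, and reduce to a one-dimensional model integral. But the decisive step is missing, and you say so yourself in your ``main obstacle'' paragraph: you never actually extract the factor $(1+|\ell|^{2\nu})^{-1}$ or the even/odd dichotomy; you only offer heuristics (``the honest statement is that\dots'', ``must come from\dots'', matching against \eqref{eq:Gfactorexcep}), and appealing to \eqref{eq:Gfactorexcep} is circular since the point of the lemma is to pin down this $\ell$-dependence. Your heuristic that the phase is essentially the constant $(-1)^{\ell/2}$ on the bulk in the even case is moreover not quantitatively adequate: in the natural variable the phase factor is $\bigl(\tfrac{1-ix}{1+ix}\bigr)^{\ell/2}$, whose argument $-\ell\arctan x$ sweeps through a range of order $\ell$, and it is precisely this oscillation that produces the $|\ell|^{-2\nu}$ decay; a ``nearly constant phase'' picture would predict $\asymp\nu^{-1}$ with no $\ell$-dependence for even $\ell$ and gives no size at all for odd $\ell$. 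The paper closes exactly this gap by substituting $x=2u(\varphi-\pi)$ (note the natural peak width is $\asymp 1/u$, not your $1/\sqrt u$; at scale $\psi\asymp u^{-1/2}$ the term $\psi^2/2$ you discard is comparable to $1/(2u)$, so your expansion of the phase is not valid there) and then evaluating the completed model integral in closed form,
\begin{align*}
    \int_{-\infty }^{\infty} ( 1+ x^2) ^{-1/2-\nu}\Bigl( \frac{1-ix}{1+ix}\Bigr)^{\ell/2} \d x  = \begin{cases}
       2^{1-2\nu} (-1)^n  \cos(\pi \nu) \Gamma(2\nu)\frac{\Gamma(1/2+n-\nu)}{\Gamma(1/2+n+\nu)} , &\ell=2n, \\
        2^{1-2\nu} (-1)^{n}\sin(\pi\nu) \Gamma(2\nu)\frac{\Gamma(-n-\nu)}{\Gamma(-n+\nu)}, &\ell=2n+1,
    \end{cases}
\end{align*}
obtained via $x=\tan\alpha$ and a Beta-function identity; the Gamma-ratio asymptotics then deliver simultaneously the $\nu^{-1}$ versus $O(1)$ dichotomy (through $\cos(\pi\nu)\Gamma(2\nu)$ versus $\sin(\pi\nu)\Gamma(2\nu)$) and the uniform $(1+|\ell|)^{-2\nu}$ decay.

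To repair your proposal you would need either this exact evaluation or an equally quantitative substitute (e.g.\ a contour/stationary-phase analysis of the model integral uniform in $\ell$ and $\nu$), which is the actual content of the proof; the truncation radius $R=u^{1/10}(1+\ell)$, the error terms $O((1+\ell^6)u^{-1/2})$ from the Taylor remainders, and the tail bounds of size $\nu^{-1}u^{-\nu/10}(1+|\ell|^{2\nu})^{-1}$ then come out as in your bookkeeping sketch.
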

\begin{proof}
Denoting  $R=u^{1/10}(1+\ell)$, we may assume that $R/u < 1$ as otherwise the claim is trivial. We apply Lemma \ref{lem:phirewritten} and split the integral into two parts
    \begin{align*}
       & \int_0^{2\pi } (2u+1 + 2\sqrt{u(u+1)} \cos \varphi)^{-1/2-\nu}    \bigg(\frac{\sqrt{1+1/u} + e^{-i\varphi}}{\sqrt{1+1/u} + e^{i\varphi}}\bigg)^{\ell/2} 
 \d \varphi  \\
 &=    (2u+1)^{-1/2-\nu} \bigg(\int_{|\varphi -\pi | > R/u} + \int_{|\varphi-\pi| \leq R/u}\bigg) .
    \end{align*}
The contribution from the first integral is, by the triangle inequality and by a similar argument as in the proof of Lemma \ref{le:phitrivialbound},
\begin{align*}
    &\leq  (2u+1)^{-1/2-\nu}\int_{|\varphi -\pi | > R/u}  (2u+1 + 2\sqrt{u(u+1)} \cos \varphi)^{-1/2-\nu} \d \varphi 
 \\
 &\ll (2u+1)^{-1/2-\nu} \bigg( 1 +  \int_{|\varphi -\pi| > R/u} (\tfrac{1}{4(2u+1)^2} +\tfrac{1}{4}(\varphi-\pi/2)^2)^{-1/2-\nu}   
\d \varphi  \bigg)  \\
& \ll_\eta  \frac{u^{-1/2+ \nu}}{ \nu R^{2\nu}}  \ll \frac{u^{-1/2+ \nu}}{ \nu (1+|\ell|^{2\nu})}  u^{-\nu/10} ,
\end{align*}
by our assumed lower bound $\nu$. 

For the second integral we have by Taylor approximation for $|\varphi-\pi| \leq R/u$  and $\nu \leq 1/2$
\begin{align*}
  (1+\sqrt{1-\tfrac{1}{(2u+1)^2}} \cos \varphi)^{-1/2-\nu}
 & = \Bigl(\frac{1}{8 u^2} + \frac{(\varphi-\pi)^2}{2}\Bigr)^{-1/2-\nu} + O( R^4 ) 
\end{align*}
and
\begin{align*}
    \Bigl( \frac{\sqrt{1+1/u} + e^{-i\varphi}}{\sqrt{1+1/u} + e^{i\varphi}} \Bigr)^{\ell/2}=\Bigl(\frac{\frac{1}{2u}-i(\varphi-\pi)}{\frac{1}{2u}+i(\varphi-\pi)}\Bigr)^{\ell/2}+O\Bigl(\frac{R^2 |\ell| }{u}\Bigr).
\end{align*}
The contribution from the error terms is bounded by
\begin{align*}
\ll  u^{-1/2-\nu} \int_{|\varphi-\pi| \leq R/u} R^4 u^{2\nu} \d \varphi \ll   u^{-1/2+\nu}  \frac{R^5}{u} \ll \frac{u^{-1/2+\nu}}{1+|\ell|^{2\nu}} \frac{1+\ell^{6}}{u^{1/2}}.
\end{align*}
Then by the substitution $x=2u(\varphi-\pi)$, it remains to show that 
\begin{align*}
   \int_{-2R}^{2R } ( 1+ x^2) ^{-1/2-\nu}\bigg( \frac{1-ix}{1+ix}\bigg)^{\ell/2} \d x \asymp \frac{1}{1+|\ell|^{2\nu}}(1+ O(\nu^{-1} u^{-\nu/10}))
\end{align*}
We write
\begin{align*}
       \int_{-R}^{R} ( 1+ x^2) ^{-1/2-\nu}\bigg( \frac{1-ix}{1+ix}\bigg)^{\ell/2} \d x  =    &\int_{-\infty }^{\infty} ( 1+ x^2) ^{-1/2-\nu}\bigg( \frac{1-ix}{1+ix}\bigg)^{\ell/2} \d x  \\
       &- \int_{|x| > R} ( 1+ x^2) ^{-1/2-\nu}\bigg( \frac{1-ix}{1+ix}\bigg)^{\ell/2} \d x.
\end{align*}
For the integral over $|x| > R$ we have by triangle inequality
\begin{align*}
     \int_{|x| > R} ( 1+ x^2) ^{-1/2-\nu}\bigg( \frac{1-ix}{1+ix}\bigg)^{\ell/2} \d x  \ll  \frac{1}{\nu R^{2\nu}} \ll  \frac{1}{1+|\ell|^{2\nu}}\nu^{-1} u^{-\nu/10}
\end{align*}
The completed integral may be evaluated as
\begin{align*}
    \int_{-\infty }^{\infty} ( 1+ x^2) ^{-1/2-\nu}\bigg( \frac{1-ix}{1+ix}\bigg)^{\ell/2} \d x  = \begin{dcases}
       2^{1-2\nu} (-1)^n  \cos(\pi \nu) \Gamma(2\nu)\frac{\Gamma(1/2+n-\nu)}{\Gamma(1/2+n+\nu)} , \, &\ell=2n, \\
        2^{1-2\nu} (-1)^{n}\sin(\pi\nu) \Gamma(2\nu)\frac{\Gamma(-n-\nu)}{\Gamma(-n+\nu)}, \, &\ell=2n+1,
    \end{dcases}
\end{align*}
which can be seen by the substitution $x= \tan \alpha$ and an integral representation via trigonometric functions for the reciprocal of the Beta function. 
\end{proof}

For the discrete series we can calculate the $L^2$-norm precisely  for  $\mathcal{P}_V^{(\ell_1,\ell_2)}$.  
\begin{lemma} \label{le:orthogonpol}
 For $\nu=(k-1)/2$ with $k \geq 2$ and $|\ell_1|,|\ell_2| \geq k$, $\ell_1 \equiv \ell_2 \equiv k \pmod{2}$ we have
  \begin{align*}
    \int_0^\infty  |\mathcal{P}^{(\ell_1,\ell_2)}_{\nu}(\a_u)|^2 \d u = \frac{1}{k-1}. 
  \end{align*}
\end{lemma}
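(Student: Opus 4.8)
The plan is to pass to the $L^2(\G)$-norm, strip off the $\K$-types, reduce by the raising operator to the minimal weight $\ell_1=\ell_2=k$, and evaluate that case directly.

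\textbf{Reduction to the $L^2(\G)$-norm.} Since $\PP^{(\ell_1,\ell_2)}_\nu$ has left type $\ell_1$ and right type $\ell_2$ we have $\PP^{(\ell_1,\ell_2)}_\nu(\k[\theta_1]\a_u\k[\theta_2])=e^{i\ell_1\theta_1+i\ell_2\theta_2}\PP^{(\ell_1,\ell_2)}_\nu(\a_u)$, so the Cartan form of the Haar measure in \eqref{eq:integrationonG} gives
\[
\langle \PP^{(\ell_1,\ell_2)}_\nu,\PP^{(\ell_1,\ell_2)}_\nu\rangle_\G \;=\; 2\int_0^\infty |\PP^{(\ell_1,\ell_2)}_\nu(\a_u)|^2\,\d u .
\]
Hence it suffices to prove $\langle \PP^{(\ell_1,\ell_2)}_\nu,\PP^{(\ell_1,\ell_2)}_\nu\rangle_\G = 2/(k-1)$ for all admissible $\ell_1,\ell_2$. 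Because $\PP^{(\ell_1,\ell_2)}_\nu$ depends on $\ell_1,\ell_2$ only through $|\ell_1|,|\ell_2|$ and $\phi_{-\ell_1,-\ell_2}(\a_u,\nu)=\phi_{\ell_1,\ell_2}(\a_u,\nu)$ (from Lemma \ref{lem:phirewritten} after $\varphi\mapsto-\varphi$), while for a discrete series of weight $k\ge 2$ the admissible types always share a sign, I may assume $\ell_1,\ell_2\ge k$.

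\textbf{The edge case and $L^2$-membership.} For $\ell_1=\ell_2=k$ the two quotients of $G$-factors in \eqref{eq:unitarynorm} cancel, so $\PP^{(k,k)}_\nu=\phi_{k,k}(\cdot,\nu)$, and with $\nu=(k-1)/2$ the second integral formula of Lemma \ref{lem:phirewritten} collapses to
\[
\phi_{k,k}(\a_u,\nu)=\frac{1}{2\pi}\int_0^{2\pi}\Bigl(\cosh\tfrac{\varrho}{2}+\sinh\tfrac{\varrho}{2}e^{i\varphi}\Bigr)^{-k}\d\varphi=\Bigl(\cosh\tfrac{\varrho}{2}\Bigr)^{-k}=(u+1)^{-k/2},
\]
the middle step being the constant Fourier coefficient (equivalently a residue at the origin, legitimate since $\sinh\tfrac\varrho2<\cosh\tfrac\varrho2$). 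Hence $\int_0^\infty|\PP^{(k,k)}_\nu(\a_u)|^2\d u=\int_0^\infty(u+1)^{-k}\d u=1/(k-1)$. More generally, expanding the polynomial factor $(\cosh\tfrac\varrho2+\sinh\tfrac\varrho2 e^{-i\varphi})^{(\ell_2-k)/2}$ (a polynomial in $e^{-i\varphi}$, as $\ell_2\equiv k\pmod 2$) and evaluating the remaining integrals by residues shows $\phi_{\ell_1,\ell_2}(\a_u,\nu)=(\cosh\tfrac\varrho2)^{-k}Q(\tanh\tfrac\varrho2)$ for a polynomial $Q$; in particular each $\phi_{\ell_1,\ell_2}(\cdot,\nu)$ is smooth on $\G$ and decays like $u^{-k/2}$ as $u\to\infty$, so $\PP^{(\ell_1,\ell_2)}_\nu\in L^2(\G)$ and the integration by parts used below is valid with no boundary contribution.

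\textbf{Reduction to the edge case.} Fix $\ell_2\ge k+2$. By \eqref{eq:raisingphi}, $\phi_{\ell_2}(\cdot,\nu)=\tfrac{G_\phi(\nu,\ell_2)}{G_\phi(\nu,\ell_2-2)}\mathrm{e}^+\phi_{\ell_2-2}(\cdot,\nu)$, and $\mathrm{e}^+$ commutes with left translations, hence with the left-$\K$-projection \eqref{eq:2pitopi}; thus $\phi_{\ell_1,\ell_2}(\cdot,\nu)=\tfrac{G_\phi(\nu,\ell_2)}{G_\phi(\nu,\ell_2-2)}\mathrm{e}^+\phi_{\ell_1,\ell_2-2}(\cdot,\nu)$. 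Substituting into \eqref{eq:unitarynorm}, all factors cancel except one, leaving
\[
\PP^{(\ell_1,\ell_2)}_\nu=\frac{G(\nu,\ell_2)}{G(\nu,\ell_2-2)}\,\mathrm{e}^+\PP^{(\ell_1,\ell_2-2)}_\nu .
\]
Taking $L^2(\G)$-norms, using \eqref{eq:integrationbypartsraisinglowering}, the identity $\mathrm{e}^-\mathrm{e}^+=-4\Omega+\mathrm{x}_3^2+2i\mathrm{x}_3$ from \eqref{eq:casimirdef}, and that $\PP^{(\ell_1,\ell_2-2)}_\nu$ is a $\Omega$-eigenfunction with eigenvalue $\tfrac14-\nu^2$ of right type $\ell_2-2$ (so $\mathrm{x}_3$ acts by $i(\ell_2-2)$), one gets
\[
\langle \mathrm{e}^+\PP^{(\ell_1,\ell_2-2)}_\nu,\mathrm{e}^+\PP^{(\ell_1,\ell_2-2)}_\nu\rangle_\G=\bigl((\ell_2-1)^2-4\nu^2\bigr)\langle \PP^{(\ell_1,\ell_2-2)}_\nu,\PP^{(\ell_1,\ell_2-2)}_\nu\rangle_\G,
\]
and with $\nu=(k-1)/2$ the prefactor equals $(\ell_2-k)(\ell_2+k-2)>0$. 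On the other hand the explicit value of $G(\nu,\ell)$ in the discrete series (recorded after \eqref{eq:raisingvarphi}) gives $\bigl(G(\nu,\ell_2)/G(\nu,\ell_2-2)\bigr)^2=\bigl((\ell_2-k)(\ell_2+k-2)\bigr)^{-1}$. The two factors cancel, so $\langle\PP^{(\ell_1,\ell_2)}_\nu,\PP^{(\ell_1,\ell_2)}_\nu\rangle_\G=\langle\PP^{(\ell_1,\ell_2-2)}_\nu,\PP^{(\ell_1,\ell_2-2)}_\nu\rangle_\G$. Iterating down to $\ell_2=k$, and then running the entirely parallel argument in the left variable (the roles of left- and right-$\K$-types being symmetric) to bring $\ell_1$ down to $k$, reduces everything to the edge case $\langle\PP^{(k,k)}_\nu,\PP^{(k,k)}_\nu\rangle_\G=2/(k-1)$.

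\textbf{Main difficulty.} The computation is mechanical once the identity $\PP^{(\ell_1,\ell_2)}_\nu=\tfrac{G(\nu,\ell_2)}{G(\nu,\ell_2-2)}\mathrm{e}^+\PP^{(\ell_1,\ell_2-2)}_\nu$ is established; the two points requiring care are (i) the bookkeeping of the $G$- and $G_\phi$-factors of \eqref{eq:unitarynorm}, which are tuned precisely so that the square of $G(\nu,\ell_2)/G(\nu,\ell_2-2)$ undoes the eigenvalue of $\mathrm{e}^-\mathrm{e}^+$ — this is the concrete incarnation of the claim that the $\PP^{(\ell_1,\ell_2)}_\nu$ are the unitarily normalised discrete-series matrix coefficients — and (ii) the legitimacy of the integration by parts, which is why one first records the closed form $\phi_{\ell_1,\ell_2}(\a_u,\nu)=(\cosh\tfrac\varrho2)^{-k}Q(\tanh\tfrac\varrho2)$ and its $u^{-k/2}$ decay. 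One can also bypass (i) conceptually by identifying $\PP^{(\ell_1,\ell_2)}_\nu$ with a matrix coefficient of the discrete series realised inside $L^2(\G)$ and invoking Schur orthogonality, which returns the norm as the reciprocal of the formal degree.
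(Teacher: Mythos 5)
Your route is genuinely different from the paper's: the paper proves this lemma by identifying $\PP^{(\ell_1,\ell_2)}_{\nu}(\a_u)$ with the normalised matrix coefficients $\mathfrak{P}^{-k/2}_{-\ell_1/2,-\ell_2/2}(2u+1)$ of \cite{liegroupbook} and quoting the $L^2$-normalisation formula there, whereas you compute the norm by a ladder argument. Your edge case is correct ($\phi_{k,k}(\a_u,\tfrac{k-1}{2})=(\cosh\tfrac{\varrho}{2})^{-k}=(u+1)^{-k/2}$, giving $1/(k-1)$), and the reduction in the \emph{right} index is fully supported by the paper's toolkit: \eqref{eq:raisingphi} plus left-invariance of $\mathrm{e}^{+}$ gives $\phi_{\ell_1,\ell_2}=\tfrac{G_\phi(\nu,\ell_2)}{G_\phi(\nu,\ell_2-2)}\mathrm{e}^{+}\phi_{\ell_1,\ell_2-2}$, and \eqref{eq:integrationbypartsraisinglowering}, \eqref{eq:casimirdef} and the explicit discrete-series value of $G$ indeed produce the cancellation $\bigl(G(\nu,\ell_2)/G(\nu,\ell_2-2)\bigr)^{2}(\ell_2-k)(\ell_2+k-2)=1$.

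The gap is the left-index step, which is not ``entirely parallel''. The function $\phi_{\ell_1,\ell_2}$ is defined asymmetrically, as the left-$\K$-projection \eqref{eq:2pitopi} of the right-type eigenfunction $\phi_{\ell_2}$; hence the right-raising identity is definitional, but the left-raising identity you implicitly need, $\phi_{\ell_1,\ell_2}=\tfrac{G_\phi(\nu,\ell_1)}{G_\phi(\nu,\ell_1-2)}\,\hat{\mathrm{e}}^{+}\phi_{\ell_1-2,\ell_2}$ for left Lie differentials (which the paper never defines), is not. To justify it you would need the one-dimensionality of smooth $\Omega$-eigenfunctions of fixed bi-type (regularity at $u=0$ of the Jacobi-type ODE) \emph{and} a determination of the proportionality constant, and that constant is precisely what must cancel the $G_\phi(\nu,|\ell_1|)$, $G(\nu,|\ell_1|)$ factors in \eqref{eq:unitarynorm}; neither point is addressed. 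The repair is cheap and stays inside your own computation: skip the left reduction altogether. After bringing $\ell_2$ down to $k$, Lemma \ref{lem:phirewritten} with $\nu=\tfrac{k-1}{2}$, $\ell_2=k$ has second exponent $0$, so the integral is a single binomial Fourier coefficient,
\begin{align*}
\phi_{\ell_1,k}\bigl(\a_u,\tfrac{k-1}{2}\bigr)=\Bigl(\cosh\tfrac{\varrho}{2}\Bigr)^{-k}\binom{-k}{m}\Bigl(\tanh\tfrac{\varrho}{2}\Bigr)^{m},\qquad m=\tfrac{\ell_1-k}{2},
\end{align*}
and combining $\bigl|\binom{-k}{m}\bigr|=\binom{k+m-1}{m}$ with \eqref{eq:Gfactorholo} (which at $\ell_2=k$ equals $\binom{k+m-1}{m}^{-1/2}$) and the substitution $t=\tanh^2\tfrac{\varrho}{2}$ gives
\begin{align*}
\int_0^\infty |\PP^{(\ell_1,k)}_{\nu}(\a_u)|^2\,\d u=\binom{k+m-1}{m}\int_0^1 t^{m}(1-t)^{k-2}\,\d t=\frac{1}{k-1}
\end{align*}
for every admissible $\ell_1$. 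With this replacement your argument is complete; alternatively, one can identify $\PP^{(\ell_1,\ell_2)}_\nu$ with discrete-series matrix coefficients and invoke Schur orthogonality, which is in substance what the paper does via \cite{liegroupbook}.
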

\begin{proof}
    By Lemma \ref{lem:phirewritten}, we have that
    \begin{align*}
        \phi_{\ell_1,\ell_2}(\a_u,\tfrac{k-1}{2})=\mathfrak{P}^{-k/2}_{-\ell_1/2,-\ell_2/2}(2u+1),
    \end{align*}
    where $\mathfrak{P}$ is as in \cite[sec. 6.5.4 eq. (1)]{liegroupbook}. Combining this with \cite[sec. 6.5.6 eqs. (8), (8')]{liegroupbook} 
    \begin{align*}
        \PP_{\nu}^{(\ell_1,\ell_2)}(\a_u) = \PP^{-k/2}_{-\ell_1/2,-\ell_2/2}(2u+1),
    \end{align*}
where the $\PP$ on right-hand side corresponds to their notation. The lemma now follows from \cite[sec. 7.8.4 eq. (12)]{liegroupbook}.
\end{proof}

\section{Decay in spectrum}
We need the following proposition, which shows that the transforms $ \langle F, \mathcal{P}^{(\ell_1,\ell_2)}_{\nu}\rangle_{\G}$ decay quickly in $\nu_V,\ell_1,\ell_2$ for smooth $F$ having good derivative bounds.
\begin{proposition}\label{prop:decayspec}
   Let $\delta > 0$ and $U  \geq 1$ and let $F:\G\to \C$ be a smooth compactly supported function, supported on $u(\g) \leq U$ which satisfies for all $0 \leq J_0 +2J_1+2J_2 \leq 2J$ \begin{align}\label{eq:derivbound}
   \Omega^{J_0} \partial_{\varphi}^{J_1} \partial_\vartheta^{J_2} F \ll \delta^{-2J_0-J_1-J_2}.
    \end{align}
 Then for $\nu \in i\R$ and $\ell_1\equiv \ell_2 \pmod{2}$ we have
    \begin{align*}
         \langle F, \mathcal{P}^{(\ell_1,\ell_2)}_{\nu}\rangle_{\G}\ll_J U^{1/2}\frac{ \log(1+U)}{(1+(\delta |\nu|)^2+(\delta \ell_1)^2+(\delta \ell_2)^2)^J},
    \end{align*}
    for $\nu \in (0,1/2)$ we have
     \begin{align*}
         \langle F, \mathcal{P}^{(\ell_1,\ell_2)}_{\nu}\rangle_{\G}\ll_J U^{1/2+\nu} \frac{ \min\{\nu^{-1}, \log(1+U)\}(1+\ell_1)(1+\ell_2)}{(1+(\delta \ell_2)^2+(\delta \ell_1)^2)^J},
    \end{align*}
    and for $\nu=(k-1)/2$, $k \in \Z_{>0}$, $|\ell_1|,|\ell_2| \geq k, \ell_1\equiv \ell_2 \equiv k \pmod{2}$, we have
        \begin{align*}
         \langle F, \mathcal{P}^{(\ell_1,\ell_2)}_{\nu}\rangle_{\G}\ll_J U^{1/2} \frac{1}{(1+(\delta |\nu|)^2+(\delta \ell_1)^2+(\delta \ell_2)^2)^J}.
    \end{align*}
\end{proposition}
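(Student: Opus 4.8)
The plan is to estimate $\langle F, \mathcal{P}_\nu^{(\ell_1,\ell_2)}\rangle_\G$ by integrating by parts against the differential operators $\Omega$, $\partial_\varphi$, $\partial_\vartheta$, exploiting that $\mathcal{P}_\nu^{(\ell_1,\ell_2)}$ (hence $\phi_{\ell_1,\ell_2}(\cdot,\nu)$) is an eigenfunction with eigenvalue $\tfrac14-\nu^2$ for $\Omega$ and has pure left-type $\ell_1$, right-type $\ell_2$. First I would use the Cartan-coordinate integration formula \eqref{eq:integrationonG} to write $\langle F,\mathcal{P}_\nu^{(\ell_1,\ell_2)}\rangle_\G = 2\int_0^\infty (\mathcal{F}_{\ell_1,\ell_2}F)(\a_u)\,\overline{\phi_{\ell_1,\ell_2}(\a_u,\nu)}\,\d u$ up to a harmless constant coming from the normalising factor $G_\phi(\nu,\ell_1)G(\nu,\ell_2)/(G_\phi(\nu,\ell_2)G(\nu,\ell_1))$ computed in \eqref{eq:Gfactor}; in the regular and holomorphic cases that factor has modulus bounded by a power of $(1+|\ell_1|)(1+|\ell_2|)$ as recorded in \eqref{eq:Gfactorregular}, \eqref{eq:Gfactorholo}, which gets absorbed into the polynomial prefactors, and in the exceptional case \eqref{eq:Gfactorexcep} it contributes exactly the $(1+\ell_1)(1+\ell_2)$ stated.

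Next, since $\Omega$ is symmetric \eqref{eq:omegaselfadjoint} and commutes with both $l_\h$ and $r_\h$, and $\partial_\varphi$, $\partial_\vartheta$ act simply on fixed types (multiplying $\mathcal{P}_\nu^{(\ell_1,\ell_2)}$ by $i\ell_1$, $i\ell_2$ respectively), I would write, for any $J_0,J_1,J_2\ge 0$,
\begin{align*}
\Bigl(\tfrac14-\nu^2\Bigr)^{J_0}(i\ell_1)^{J_1}(i\ell_2)^{J_2}\langle F,\mathcal{P}_\nu^{(\ell_1,\ell_2)}\rangle_\G = \langle \Omega^{J_0}\partial_\varphi^{J_1}\partial_\vartheta^{J_2}F,\mathcal{P}_\nu^{(\ell_1,\ell_2)}\rangle_\G,
\end{align*}
moving the operators onto $F$ by integration by parts (for $\partial_\varphi,\partial_\vartheta$ this is periodicity in the $\K$-variables; one must check no boundary terms, which is automatic by compact support and $2\pi$-periodicity). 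Then I would bound the right-hand side trivially: $|\langle G, \mathcal{P}_\nu^{(\ell_1,\ell_2)}\rangle_\G| \le \|G\|_\infty \cdot \mathrm{supp}\text{-volume} \cdot \sup_{u\le U}|\phi_{\ell_1,\ell_2}(\a_u,\nu)|$, using the support bound $u(\g)\le U$ (whose volume in $u$ is $\asymp U$, giving the $U^{1/2}$ after pairing with the $L^2$-type decay of $\phi$, or more carefully using $\int_0^U |\phi_{\ell_1,\ell_2}(\a_u,\nu)|\,\d u$). For $\nu\in i\R$ and for the discrete series, Lemma \ref{le:phitrivialbound} gives $\phi_{\ell_1,\ell_2}(\a_u,\nu)\ll \log(1+u)(1+u)^{-1/2+\Re\nu}$, so $\int_0^U|\phi_{\ell_1,\ell_2}|\,\d u \ll U^{1/2}\log(1+U)$ when $\Re\nu=0$ (and similarly $\ll U^{1/2}$ in the holomorphic case where one can use Lemma \ref{le:orthogonpol} and Cauchy--Schwarz to avoid the log); for $\nu\in(0,1/2)$ it gives the $U^{1/2+\nu}\min\{\nu^{-1},\log(1+U)\}$ factor. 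Combining with the derivative hypothesis \eqref{eq:derivbound}, $\|\Omega^{J_0}\partial_\varphi^{J_1}\partial_\vartheta^{J_2}F\|_\infty\ll\delta^{-2J_0-J_1-J_2}$, and optimising over all $(J_0,J_1,J_2)$ with $J_0+2J_1+2J_2\le 2J$ yields the denominator $(1+(\delta|\nu|)^2+(\delta\ell_1)^2+(\delta\ell_2)^2)^J$ in each case (choosing the derivative configuration that attacks whichever of $|\nu|,|\ell_1|,|\ell_2|$ is largest; note $\Omega$ costs two powers of $\delta^{-1}$ but buys $\nu^2$, matching).

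The main obstacle I anticipate is \emph{not} the integration-by-parts bookkeeping but the interplay of the normalising $G$-factors with the $\ell_i$-dependence, particularly in the exceptional spectrum: one must verify that the ratio $G(V,\ell_1,\ell_2)/G_\phi(\nu,\ell_1,\ell_2)$ implicit in \eqref{eq:ratioPphi} does not reintroduce growth in $|\ell_1|,|\ell_2|$ beyond the claimed $(1+\ell_1)(1+\ell_2)$, which requires the Gamma-function estimate \eqref{eq:Gfactorexcep} and care that the integration by parts in $\partial_\varphi,\partial_\vartheta$ acts on $\phi_{\ell_1,\ell_2}$ rather than on $\mathcal{P}_\nu^{(\ell_1,\ell_2)}$ directly, so the $G$-factor is a harmless constant pulled outside. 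A secondary subtlety is ensuring $\Omega^{J_0}$ applied to $F$ still satisfies the required bound when expressed in the Cartan coordinates $(u,\varphi,\vartheta)$ via \eqref{eq:casimircartan} — here the hypothesis \eqref{eq:derivbound} is stated precisely for the combination $\Omega^{J_0}\partial_\varphi^{J_1}\partial_\vartheta^{J_2}$, so this is exactly what is needed and no further conversion is necessary. I would also double-check the discrete-series case separately using Lemma \ref{le:orthogonpol}: there $\|\mathcal{P}_\nu^{(\ell_1,\ell_2)}\|_{L^2(\d u)}^2 = 1/(k-1)$, so Cauchy--Schwarz against $\Omega^{J_0}\partial_\varphi^{J_1}\partial_\vartheta^{J_2}F$ (supported where $u\le U$, hence $L^2(\d u)$-norm $\ll U^{1/2}\delta^{-\cdots}$) directly gives the clean $U^{1/2}$ with no logarithm, which is why that case has a sharper statement.
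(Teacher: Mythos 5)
Your proposal follows essentially the same route as the paper's proof: integrate by parts against $\Omega,\partial_\varphi,\partial_\vartheta$, using that $\mathcal{P}^{(\ell_1,\ell_2)}_\nu$ is an $\Omega$-eigenfunction of fixed left/right type (the paper packages this into the single operator $\Xi = 1+\delta^2\bigl((1/4-\Omega)-\partial_\varphi^2-\partial_\vartheta^2\bigr)$ and applies $\Xi^J$, which amounts to the same thing as your optimisation over monomials), and then bound $\int_0^U|\mathcal{P}^{(\ell_1,\ell_2)}_\nu(\a_u)|\,\d u$ case by case: Lemma \ref{le:phitrivialbound} with \eqref{eq:Gfactorregular} for $\nu\in i\R$, with \eqref{eq:Gfactorexcep} for the exceptional range, and Cauchy--Schwarz with Lemma \ref{le:orthogonpol} for the discrete series.

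Two remarks. First, one assertion in your write-up is wrong, though harmless because your own fallback supersedes it: the holomorphic normalising factor \eqref{eq:Gfactorholo} is \emph{not} bounded by a fixed power of $(1+|\ell_1|)(1+|\ell_2|)$; for instance with $\ell_1=k$ and $\ell_2\gg k$ it is of size roughly $\bigl((\ell_2/2)^{k-1}/\Gamma(k)\bigr)^{1/2}$, which grows super-polynomially when $k$ grows with $\ell_2$. This is precisely why the discrete series must be handled, as in the last part of your proposal and as in the paper, by working directly with $\mathcal{P}^{(\ell_1,\ell_2)}_\nu$ via Lemma \ref{le:orthogonpol} and Cauchy--Schwarz, so that no $G$-factor ever appears. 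Second, if the constraint $J_0+2J_1+2J_2\le 2J$ in \eqref{eq:derivbound} is taken literally, your pure monomial $\partial_\varphi^{J_1}$ is capped at $J_1\le J$ and only buys $(\delta|\ell_1|)^{-J}$, short of the $(\delta|\ell_1|)^{-2J}$ needed when $\delta|\ell_1|$ dominates; the paper's own $\Xi^J$ expansion has the identical tension (its extreme term is $\delta^{2J}\partial_\varphi^{2J}$), so this reflects the weighting in the stated hypothesis rather than a defect of your argument, but you should make the bookkeeping explicit when quantifying the optimisation step.
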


\begin{proof}
  Define
   \begin{align*}
       \Xi := 1+ \delta^2 \Bigl((1/4-\Omega) -\partial_{\varphi}^2 - \partial_\vartheta^2\Bigr).
   \end{align*}
    so that by \eqref{eq:casimircartan}, \eqref{eq:derivbound}, and the assumption $U> 1$ we have  $  \Xi^{J} F   \ll_J 1.$ Since $\mathcal{P}^{(\ell_1,\ell_2)}_{\nu}$ is an eigenfunction of $\Omega$ with fixed types, we have
    \begin{align*}
        \Xi^{J} \mathcal{P}^{(\ell_1,\ell_2)}_{\nu} =(1+(\delta \nu)^2+(\delta\ell_2)^2+(\delta \ell_1)^2)^J \mathcal{P}^{(\ell_1,\ell_2)}_{\nu}
    \end{align*}
    Therefore, we have by integration by parts and by \eqref{eq:integrationonG}
     \begin{align*}
        \langle F, \mathcal{P}^{(\ell_1,\ell_2)}_{\nu}\rangle_{\G}&=  \frac{1}{(1+(\delta\nu)^2+(\delta \ell_1)^2+(\delta \ell_2)^2)^J}  \langle F, \Xi^J\mathcal{P}^{(\ell_1,\ell_2)}_{\nu}\rangle_{\G}\\
    &  =  \frac{1}{(1+(\delta \nu)^2+(\delta \ell_1)^2+(\delta \ell_2)^2)^J }\langle \Xi^J F, \mathcal{P}^{(\ell_1,\ell_2)}_{\nu}\rangle_{\G}  \\
 & \ll_J\frac{1}{(1+(\delta \nu)^2+(\delta \ell_1)^2+(\delta \ell_2)^2)^J} \int_{0}^{ U} |\mathcal{P}^{(\ell_1,\ell_2)}_{\nu}(\a_u)|\d u.
   \end{align*}
For $\nu \in i \R$ we obtain by Lemma \ref{le:phitrivialbound}, \eqref{eq:unitarynorm}, and \eqref{eq:Gfactorregular} that
\begin{align*}
\int_{0}^{U} |\mathcal{P}^{(\ell_1,\ell_2)}_{\nu}(\a_u)|\d u  & \ll U^{1/2}\log(1+U).
\end{align*}
For $\nu \in [0,1/2)$  we use again Lemma  \ref{le:phitrivialbound} and \eqref{eq:unitarynorm}, now together with \eqref{eq:Gfactorexcep}, to get
\begin{align*}
    \int_{0}^{U} |\mathcal{P}^{(\ell_1,\ell_2)}_{\nu}(\a_u)|\d u & \ll  (1+\ell_1)(1+\ell_2)  \int_{0}^{U} |\phi_{\ell_1,\ell_2}(\a_u,\nu)|\d u \\
& \ll (1+\ell_1)(1+\ell_2) U^{1/2+\nu}\min\{\nu^{-1}, \log(1+U)\}
\end{align*}
Finally, for $\nu=(k-1)/2$ we use Cauchy-Schwarz and Lemma \ref{le:orthogonpol} to get 
\begin{align*}
    \int_{0}^{U} |\mathcal{P}^{(\ell_1,\ell_2)}_{\nu}(\a_u)|\d u  \leq  U^{1/2} \bigg(\int_{0}^{U} |\mathcal{P}^{(\ell_1,\ell_2)}_{\nu}(\a_u)|^2\d u \bigg)^{1/2}  \ll U^{1/2}.
\end{align*}
\end{proof}

\section{Construction of a spectral majorant}

\subsection{Regular Spectrum}
To handle the regular spectrum, we import \cite[Proposition 6.1]{GM} with the minor modification that the discrete sum over $\alpha$ is replaced by the linear functional $\alpha$. Recall that 
\begin{align*}
    u(\g) = u(\g i,i) = \frac{1}{2}(a^2+b^2+c^2+d^2-2)
\end{align*}
\begin{proposition}\label{prop:regularboundbykernel}
Let  $ T \geq L \geq 1$.   There exists a smooth compactly supported function $k^\mathrm{reg}_{1/T}:\G \to \R $ satisfying $k^\mathrm{reg}_{1/T}(\mathrm{k}^{-1} \g \mathrm{k}) = k^\mathrm{reg}_{1/T}(\g)$ and
 \begin{align*}
    k^\mathrm{reg}_{1/T}(\g)\ll\frac{\mathbf{1}\{ u(\g)\leq 1/T\}}{\sqrt{1+u(\g)}}
 \end{align*}
such that for $\nu\in \R \cup i\R$ and $\ell\equiv \kappa\pmod{2}$ we have
 \begin{align*}
T^2 L \,  \langle   k^\mathrm{reg}_{1/T}(\g),\PP_\nu^{(\ell,\ell)} \rangle_\G \geq \mathbf{1}_{|\nu|\leq T} \mathbf{1}_{|\ell|\leq L}.
 \end{align*}
In particular,  for any $\alpha \in \mathcal{L}_c(\G)$ we have
    \begin{align*}
   \sum_{\substack{ \ell \equiv \kappa \pmod{2} \\ |\ell| \leq L} }    \bigg(\sum_{\substack{V \in \B(\Gamma,\chi)  \\ |\nu| \leq T }} | \langle\varphi_V^{(\ell)}\rangle_{\alpha} |^2& + \sum_{\substack{\mathfrak{c} \in \mathfrak{C}(\Gamma,\chi)}} \frac{1}{4\pi} \int_{-i T}^{iT}  |  \langle E^{(\ell)}_{\mathfrak{c}}(\cdot,\nu)\rangle_\alpha |^2 |\d \nu|  \bigg) \leq  T^2 L \, \langle \alpha|\Delta k^\mathrm{reg}_{1/T} | \alpha \rangle .
    \end{align*}   
\end{proposition}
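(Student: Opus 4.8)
The plan is to split the proof in two. The construction of $k^\mathrm{reg}_{1/T}$ together with all three displayed properties --- the pointwise bound, the $\K$-conjugation invariance $k^\mathrm{reg}_{1/T}(\mathrm{k}^{-1}\g\mathrm{k})=k^\mathrm{reg}_{1/T}(\g)$, and the spectral positivity/lower bound $T^2L\,\langle k^\mathrm{reg}_{1/T},\PP^{(\ell,\ell)}_\nu\rangle_\G\ge\mathbf{1}_{|\nu|\le T}\mathbf{1}_{|\ell|\le L}$ valid for all $\nu\in\R\cup i\R$, $\ell\equiv\kappa\pmod{2}$ --- is \cite[Proposition~6.1]{GM}, whose proof I would import essentially verbatim: there $k^\mathrm{reg}_{1/T}$ is built as a normalised self-convolution on $\G$ of a real, $\K$-conjugation-invariant bump localised at scale $1/T$ in the radial Cartan coordinate $u$ and at scale $1/L$ in the Cartan angle, the nonnegativity of the spectral transform coming from the self-convolution structure (via the spectral positivity of convolutions of real even functions, cf.\ Lemma~\ref{le:convosquare}) and the lower bound on the box $|\nu|\le T$, $|\ell|\le L$ from the usual localisation/uncertainty estimate. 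The only point where the present statement differs from \cite{GM} is that the discrete sum over the support of $\alpha$ is replaced by a functional $\alpha\in\mathcal{L}_c(\G)$; this affects only the last displayed inequality, which I now derive.

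Since $k^\mathrm{reg}_{1/T}$ is smooth, compactly supported and $\K$-conjugation invariant, averaging the identity $k^\mathrm{reg}_{1/T}(\mathrm{k}[-\psi]\g\,\mathrm{k}[\psi])=k^\mathrm{reg}_{1/T}(\g)$ over $\psi$ shows that $\mathcal{F}_{\ell_1,\ell_2}k^\mathrm{reg}_{1/T}$ vanishes unless $\ell_1=\ell_2\equiv\kappa\pmod{2}$, in which case $\PP^{(\ell,\ell)}_V=\phi_{\ell,\ell}(\cdot,\nu_V)$. Applying the pretrace formula of Proposition~\ref{prop:kernelexpansion} to $F=k^\mathrm{reg}_{1/T}$, summing over $\ell\equiv\kappa\pmod{2}$, and noting that the identity contribution is exactly the term subtracted in the definition of $\Delta$, we obtain
\begin{align*}
\Delta k^\mathrm{reg}_{1/T}(\tau_1,\tau_2)=\sum_{\ell\equiv\kappa\pmod{2}}\Bigl(&\sum_{V\in\B(\Gamma,\chi)}\langle k^\mathrm{reg}_{1/T},\PP^{(\ell,\ell)}_V\rangle_\G\,\overline{\varphi_V^{(\ell)}(\tau_1)}\,\varphi_V^{(\ell)}(\tau_2)\\
&+\sum_{\mathfrak{c}\in\mathfrak{C}(\Gamma,\chi)}\frac{1}{4\pi i}\int_{(0)}\langle k^\mathrm{reg}_{1/T},\PP^{(\ell,\ell)}_\nu\rangle_\G\,\overline{E^{(\ell)}_{\mathfrak{c}}(\tau_1,\nu)}\,E^{(\ell)}_{\mathfrak{c}}(\tau_2,\nu)\,\d\nu\Bigr),
\end{align*}
the series converging absolutely and locally uniformly in $(\tau_1,\tau_2)$ because the coefficients $\langle k^\mathrm{reg}_{1/T},\PP^{(\ell,\ell)}_\nu\rangle_\G$ decay rapidly in $|\nu|+|\ell|$ by Proposition~\ref{prop:decayspec} while $\varphi_V^{(\ell)}$ and $E^{(\ell)}_{\mathfrak{c}}(\cdot,\nu)$ grow at most polynomially, locally uniformly. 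I then apply $\overline{\alpha}$ in $\tau_1$ and $\alpha$ in $\tau_2$; because $\alpha\in\mathcal{L}_c(\G)$ bounds $|\langle f\rangle_\alpha|$ by a supremum over a fixed compact set, the functionals may be moved inside the sum and integral, and each rank-one piece $\overline{\varphi_V^{(\ell)}(\tau_1)}\varphi_V^{(\ell)}(\tau_2)$ becomes $|\langle\varphi_V^{(\ell)}\rangle_\alpha|^2\ge0$ (using $\langle\overline{f}\rangle_{\overline{\alpha}}=\overline{\langle f\rangle_\alpha}$), and likewise the Eisenstein terms become $|\langle E^{(\ell)}_{\mathfrak{c}}(\cdot,\nu)\rangle_\alpha|^2\ge0$.

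This presents $\langle\alpha|\Delta k^\mathrm{reg}_{1/T}|\alpha\rangle$ as a sum over $\ell\equiv\kappa\pmod{2}$ of $\sum_V\langle k^\mathrm{reg}_{1/T},\PP^{(\ell,\ell)}_V\rangle_\G|\langle\varphi_V^{(\ell)}\rangle_\alpha|^2$ plus the analogous Eisenstein integral, and by the first part every coefficient $\langle k^\mathrm{reg}_{1/T},\PP^{(\ell,\ell)}_\nu\rangle_\G$ is $\ge0$; in particular $\langle\alpha|\Delta k^\mathrm{reg}_{1/T}|\alpha\rangle\ge0$. Multiplying by $T^2L$, discarding all terms with $|\ell|>L$ or $|\nu_V|>T$ (legitimate since every term is nonnegative), and using $T^2L\,\langle k^\mathrm{reg}_{1/T},\PP^{(\ell,\ell)}_\nu\rangle_\G\ge1$ on the surviving range --- where $\int_{(0)}$ becomes the integral over $\nu$ on the segment $[-iT,iT]$ --- yields precisely the asserted inequality. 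The one genuine point to check is the interchange of $\alpha$ with the infinite spectral sum and the Eisenstein integral; this is exactly why we restrict to $\mathcal{L}_c(\G)$, and it rests on the locally uniform convergence of the pretrace expansion, itself a consequence of the rapid decay in Proposition~\ref{prop:decayspec}. The only substantial analytic input --- the two-variable lower bound for $k^\mathrm{reg}_{1/T}$ --- is supplied by \cite[Proposition~6.1]{GM}, so there is no serious obstacle beyond careful bookkeeping.
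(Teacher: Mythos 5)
Your proposal is correct and matches the paper's treatment: the paper simply imports \cite[Proposition 6.1]{GM} for the construction of $k^{\mathrm{reg}}_{1/T}$ and its spectral nonnegativity/lower bound, noting only that the discrete sum over the support of $\alpha$ is replaced by a linear functional. Your additional bookkeeping --- restricting to $\ell_1=\ell_2\equiv\kappa\pmod 2$ via conjugation invariance, applying Proposition \ref{prop:kernelexpansion}, and justifying the interchange of $\alpha\in\mathcal{L}_c(\G)$ with the spectral expansion through the decay in Proposition \ref{prop:decayspec} before discarding nonnegative terms --- is exactly the ``minor modification'' the paper leaves implicit.
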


\subsection{Exceptional Spectrum}
In this section we prove the following proposition,  which handles the case of the exceptional spectrum $\nu \in (0,1/2)$ with a factor of $Z^{\nu_V}$ on the spectral side.
\begin{proposition} \label{prop:exceptionalboundbykernel}
For any $\eta > 0$ there exists a large constant $c_0= c_0(\eta)>0$ such that the following holds. Let $Z \geq 2 L \geq 1.$ There exists a smooth compactly supported function $k^{\mathrm{exc}}_Z:\G \to \R$ satisfying $k^{\mathrm{exc}}_Z(\mathrm{k}^{-1} \g \mathrm{k}) = k^{\mathrm{exc}}_Z(\g)$ and \begin{align}\label{eq:exkernk_zupper}
k^{\mathrm{exc}}_Z(\smqty(a& b \\ c & d)) \ll \mathbf{1}\{  u(g) \leq Z \}
 \end{align}
such that for $\nu \in (\eta,1/2-\eta)$, $|\ell| \leq L$ and $Z^{\nu} > c_0 L^6$ we have
 \begin{align*}
 L^4 \,  \langle   k^{\mathrm{exc}}_Z (\g),\PP_\nu^{(\ell,\ell)} \rangle_\G \geq Z^{\nu_V}
 \end{align*}
and $ \langle   k^{\mathrm{exc}}_Z(\g),P_\nu^{(\ell,\ell)} \rangle_\G  \geq 0$ for all $\nu \in \R \cup i \R$ and $\ell \in \Z$. In particular, for any $\alpha \in \mathcal{L}_c(\G)$ we have
\begin{align*}
     \sum_{\substack{ \ell \equiv \kappa \pmod{2} \\ |\ell| \leq L} }    \sum_{\substack{V \in \B(\Gamma,\chi)  \\ \nu_V \in (0,1/2) \\  Z^{\nu_V}\geq c_0 L^6 }} Z^{\nu_V} | \langle\varphi_V^{(\ell)}\rangle_{\alpha} |^2 \leq L^4 \langle \alpha |\Delta k^{\mathrm{exc}}_{Z} 
 | \alpha \rangle.
\end{align*}
\end{proposition}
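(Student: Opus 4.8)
The plan is to reduce the proposition to two properties of a suitably chosen kernel $k^{\mathrm{exc}}_Z$ — pointwise positivity of its spectral transforms and a lower bound for these transforms in the exceptional range — after which the displayed inequality follows formally from Proposition \ref{prop:kernelexpansion}. For the latter step, note that since $k^{\mathrm{exc}}_Z$ is conjugation invariant, $k^{\mathrm{exc}}_Z(\k[\theta_1]\g\k[\theta_2])=k^{\mathrm{exc}}_Z(\g\k[\theta_1+\theta_2])$, so $\mathcal{F}_{\ell_1,\ell_2}k^{\mathrm{exc}}_Z=0$ unless $\ell_1=\ell_2$ and equals the right $\K$-type-$\ell$ projection when $\ell_1=\ell_2=\ell$. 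Hence Proposition \ref{prop:kernelexpansion}, after pairing the left variable of $\mathcal{K}k^{\mathrm{exc}}_Z$ with $\overline{\alpha}$ and the right with $\alpha$ (the principal term is killed by $\Delta$), gives a sum over $\ell\equiv\kappa\pmod{2}$ of
\begin{align*}
\sum_{V\in\B(\Gamma,\chi)}\langle k^{\mathrm{exc}}_Z,\PP_V^{(\ell,\ell)}\rangle_{\G}\,|\langle\varphi_V^{(\ell)}\rangle_\alpha|^2\;+\;\sum_{\mathfrak{c}}\frac{1}{4\pi}\int_{\R}\langle k^{\mathrm{exc}}_Z,\PP_{it}^{(\ell,\ell)}\rangle_{\G}\,|\langle E^{(\ell)}_{\mathfrak{c}}(\cdot,it)\rangle_\alpha|^2\,\d t,
\end{align*}
a sum of non-negative terms by the positivity of the transforms; discarding everything except the terms with $|\ell|\leq L$, $\nu_V\in(0,1/2)$, $Z^{\nu_V}\geq c_0L^6$ and inserting $L^4\langle k^{\mathrm{exc}}_Z,\PP_V^{(\ell,\ell)}\rangle_\G\geq Z^{\nu_V}$ there gives the claim. (The lower bound will be proved for $\nu\in(\eta,1/2-\eta)$; with $c_0=c_0(\eta)$ large enough the constraint $Z^{\nu_V}\geq c_0L^6$ forces $\nu_V$ above the cut-off $1/(\eta\log u)$ needed for Lemma \ref{lem:simplephiasymp} on the relevant range, and $\nu_V$ near $1/2$ is excluded by any fixed spectral gap.)

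For the construction I would take $k^{\mathrm{exc}}_Z:=c_Z\,(\overline{F}\ast F)$, convolution on $\G$, where $F$ is a real-valued, even ($F(\g)=F(\g^{-1})$), conjugation-invariant smooth bump that in Cartan coordinates $\g=\k[\varphi]\a_u\k[\vartheta]$ factorises as $F(\g)=F_0(u)\,h(\varphi+\vartheta)$; here $F_0\geq0$ has fixed shape and is supported in an annulus $u\asymp\sqrt{Z}$, chosen small enough that $\overline{F}\ast F$ is supported in $u\leq Z$ and large enough that Lemma \ref{lem:simplephiasymp} applies throughout its support, and $h\geq0$ is an even bump on $\R/2\pi\Z$ of width $\asymp1/L$, normalised so that $\widehat{h}(\ell)\asymp1$ for $|\ell|\leq L$ (e.g.\ a squared Fej\'er kernel; evenness of $h$ yields $F(\g)=F(\g^{-1})$ via \eqref{eq:cartaninverserho}). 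Since $u$ and $\varphi+\vartheta\pmod{2\pi}$ are conjugation invariant and $F_0$ vanishes near the identity (where the Cartan chart is singular), $F$ is a genuine smooth compactly supported function, hence so is $\overline{F}\ast F$, which is again conjugation invariant. For the support: writing $\cosh\varrho(\g)=2u(\g)+1$, so that $\varrho(\g)=d(\g i,i)$ is a hyperbolic distance, the triangle inequality shows that $(\overline{F}\ast F)(\g)\neq0$ forces $\varrho(\g)$ to be at most twice the maximal $\varrho$ on $\mathrm{supp}\,F$, whence $u(\g)\ll Z$; and $|k^{\mathrm{exc}}_Z(\g)|\leq c_Z\|F\|_\infty^2\,\mathrm{vol}\{u\leq c\sqrt{Z}\}\ll c_Z\sqrt{Z}$. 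Finally, the spectral positivity of a convolution square of a real even function (Lemma \ref{le:convosquare}) gives $\langle k^{\mathrm{exc}}_Z,\PP_\nu^{(\ell,\ell)}\rangle_\G=c_Z\,|\langle F,\PP_\nu^{(\ell,\ell)}\rangle_\G|^2\geq0$ for all $\nu\in\R\cup i\R$, $\ell\in\Z$, reducing us to a lower bound for $\langle F,\PP_\nu^{(\ell,\ell)}\rangle_\G$ in the exceptional range.

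Since $\PP_\nu^{(\ell,\ell)}=\phi_{\ell,\ell}(\cdot,\nu)$, conjugation invariance of $F$ together with the Cartan form of the Haar measure \eqref{eq:integrationonG} gives
\begin{align*}
\langle F,\PP_\nu^{(\ell,\ell)}\rangle_\G=2\,\widehat{h}(\ell)\int_0^\infty F_0(u)\,\phi_{\ell,\ell}(\a_u,\nu)\,\d u.
\end{align*}
For $\nu\in(\eta,1/2-\eta)$, $|\ell|\leq L$ and $u$ in $\mathrm{supp}\,F_0$, Lemma \ref{lem:simplephiasymp} gives $\phi_{\ell,\ell}(\a_u,\nu)=c_{\nu,\ell}\,u^{-1/2+\nu}(1+|\ell|^{2\nu})^{-1}\bigl(1+O_\eta(\ell^6u^{-1/2})\bigr)$ with $c_{\nu,\ell}\asymp_\eta 1$ of a fixed (positive) sign; as $F_0\geq0$ there is no cancellation, so once the relative error $\ell^6/u^{1/2}\asymp\ell^6/Z^{1/4}$ is controlled — this is precisely where the hypothesis $Z^\nu>c_0L^6$ enters, forcing $Z$ to be a large enough power of $L$ (for $\nu$ bounded away from $1/2$), and is why the conclusion carries an $L^4$ and $c_0$ depends on $\eta$ — one obtains $\langle F,\PP_\nu^{(\ell,\ell)}\rangle_\G\asymp_\eta Z^{1/4+\nu/2}(1+|\ell|^{2\nu})^{-1}>0$. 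Consequently $\langle k^{\mathrm{exc}}_Z,\PP_\nu^{(\ell,\ell)}\rangle_\G\asymp_\eta c_Z\,Z^{1/2+\nu}(1+|\ell|^{2\nu})^{-2}\geq c_Z\,Z^{1/2+\nu}/(CL^{2})$, and choosing $c_Z:=c(\eta)Z^{-1/2}$ with $c(\eta)$ large enough makes both $L^4\langle k^{\mathrm{exc}}_Z,\PP_\nu^{(\ell,\ell)}\rangle_\G\geq Z^\nu$ and $|k^{\mathrm{exc}}_Z(\g)|\ll_\eta\mathbf{1}\{u(\g)\leq Z\}$ hold, establishing \eqref{eq:exkernk_zupper}. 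The crux — and the only genuinely nontrivial point — is this uniformity of the asymptotics of $\phi_{\ell,\ell}(\a_u,\nu)$ in $\ell$ and $\nu$ across the annulus $u\asymp\sqrt{Z}$ (with Lemma \ref{le:phitrivialbound} handling crude tail contributions) while keeping $\widehat{h}(\ell)\asymp1$ over the whole range $|\ell|\leq L$; the remaining verifications (smoothness of the product profile $F$, conjugation invariance of $\overline{F}\ast F$, and the convolution-square identity for the transform) are routine.
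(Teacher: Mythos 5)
Your construction is essentially the paper's own: the paper takes $f_Z(\g)=\psi_Z(\g)F(\varphi+\vartheta)$ with $\psi_Z(\g)=Z^{-1/4}\psi(u(\g)/\sqrt{Z})$ a radial bump on the annulus $u\asymp\sqrt{Z}$ and $F$ a $[0,1]$-valued even angular bump of width $\asymp 1/(CL)$, sets $k^{\mathrm{exc}}_Z=f_Z\rhd f_Z$, gets non-negativity of all the transforms from Lemma \ref{le:convosquare}, the pointwise bound from $|f_Z|\leq\psi_Z$ and Lemma \ref{lem:k_Zupperbound}, and the lower bound from $\widehat{F}(\ell)\gg L^{-1}$ combined with Lemma \ref{lem:simplephiasymp}; your deduction of the ``in particular'' inequality from Proposition \ref{prop:kernelexpansion} by dropping non-negative terms is also the intended argument.

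However, one step fails as written, namely the normalisation of the angular factor and with it the verification of \eqref{eq:exkernk_zupper}. If $h\geq 0$ has width $\asymp 1/L$ and $\widehat{h}(\ell)\asymp 1$ for $|\ell|\leq L$, then necessarily $\|h\|_\infty\asymp L$, so $\|F\|_\infty\asymp L$ and your bound $|k^{\mathrm{exc}}_Z(\g)|\leq c_Z\|F\|_\infty^2\,\mathrm{vol}\{u\leq c\sqrt{Z}\}$ is $\ll c_Z L^2\sqrt{Z}$, not $\ll c_Z\sqrt{Z}$; even the sharper bound $|(\overline{F}\ast F)(\g)|\leq\|F\|_2^2\asymp L\sqrt{Z}$, which is attained at $\g$ near the identity, leaves $k^{\mathrm{exc}}_Z(e)\asymp c(\eta)L$ with your choice $c_Z\asymp Z^{-1/2}$, so \eqref{eq:exkernk_zupper} fails by a factor of $L$ (which may be arbitrarily large). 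The repair is exactly the paper's normalisation: keep $0\leq h\leq 1$ of width $1/(CL)$, so that $\widehat{h}(\ell)\gg L^{-1}$ for $|\ell|\leq L$; then the kernel bound holds (indeed the paper gets the stronger bound with the extra decay $1/\sqrt{1+u(\g)}$ via $|f_Z|\leq\psi_Z$ and Lemma \ref{lem:k_Zupperbound}), while the spectral lower bound weakens to $\langle k^{\mathrm{exc}}_Z,\PP_\nu^{(\ell,\ell)}\rangle_\G\gg c_Z Z^{1/2+\nu}/L^4$ --- and the prefactor $L^4$ in the statement is there precisely to absorb the two factors of $1/L$ (one from $\widehat{h}(\ell)$, one from $(1+|\ell|^{2\nu})^{-1}$) that this renormalisation costs. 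You cannot have both $\widehat{h}(\ell)\asymp 1$ and the uniform bound \eqref{eq:exkernk_zupper}. Apart from this bookkeeping slip, your argument coincides with the paper's proof, including its use of the hypothesis $Z^{\nu}>c_0L^6$ and $c_0=c_0(\eta)$ to make Lemma \ref{lem:simplephiasymp} applicable on the annulus $u\asymp\sqrt{Z}$.
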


Let us define
\begin{align*}
     f_1 \rhd f_2 (\g) := \int_\G f_1(\h ) \overline{f_2 (\h \g )} \d \h
\end{align*}
Then from the proof of \cite[Lemma 6.4]{GM} we get the following.
\begin{lemma} \label{le:convosquare}
   Let $k = f \rhd f$ for some $f$ which satisfies $f(\k \g \k^{-1})= f (\g)$. Then
   \begin{align*}
        \langle k, \PP_\nu^{(\ell,\ell)} \rangle_\G =  |\langle f, \PP_\nu^{(\ell,\ell)} \rangle_\G |^2
   \end{align*}
\end{lemma}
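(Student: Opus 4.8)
The statement to prove is Lemma \ref{le:convosquare}: if $k = f \rhd f$ with $f(\k\g\k^{-1}) = f(\g)$, then $\langle k, \PP_\nu^{(\ell,\ell)}\rangle_\G = |\langle f, \PP_\nu^{(\ell,\ell)}\rangle_\G|^2$. The plan is to unfold the definition of $\rhd$ and interpret $\langle k, \PP_\nu^{(\ell,\ell)}\rangle_\G$ as an iterated integral over $\G \times \G$, then recognise that the inner integral over $\g$, for each fixed $\h$, is a translate of the transform of $\PP_\nu^{(\ell,\ell)}$ by $\h$. Concretely, write
\begin{align*}
 \langle k, \PP_\nu^{(\ell,\ell)}\rangle_\G = \int_\G \int_\G f(\h) \overline{f(\h\g)} \, \d\h \, \overline{\PP_\nu^{(\ell,\ell)}(\g)} \, \d\g,
\end{align*}
and substitute $\g \mapsto \h^{-1}\g$ in the outer integral (using left-invariance of $\d\g$), giving $\int_\G f(\h) \overline{f(\g)} \overline{\PP_\nu^{(\ell,\ell)}(\h^{-1}\g)} \,\d\g\,\d\h$. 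So the whole thing factors provided $\PP_\nu^{(\ell,\ell)}(\h^{-1}\g)$ factors appropriately in $\h$ and $\g$ after averaging against the bi-$\K$-invariant $f$.

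The key structural input is the reproducing/eigenfunction property: because $\PP_\nu^{(\ell,\ell)} = \phi_{\ell,\ell}(\cdot,\nu)$ (by \eqref{eq:unitarynorm} with $\ell_1=\ell_2$, since the $G/G_\phi$ factor is $1$) is the spherical-type function of left and right type $\ell$, a bi-$\K$-invariant (more precisely $\k\g\k^{-1}$-invariant) function $f$ convolved against it reproduces $\phi_{\ell,\ell}$ up to the scalar $\langle f, \PP_\nu^{(\ell,\ell)}\rangle_\G$. This is exactly the mechanism already used in Proposition \ref{prop:kerneleigen}: the integral operator with kernel $\mathcal{F}_{\ell,\ell}f$ acts on the eigenfunction by the scalar eigenvalue $\langle f, \PP_\nu^{(\ell,\ell)}\rangle_\G$. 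I would therefore invoke Proposition \ref{prop:kerneleigen} (or rather the $\ell_1=\ell_2=\ell$ case, i.e. \cite[Proposition 5.2]{GM}) with $\varphi_V^{(\ell)}$ replaced by $\PP_\nu^{(\ell,\ell)}$ itself — or more cleanly, re-derive the single identity I need directly: that $\int_\G f(\h^{-1}\g)\overline{\PP_\nu^{(\ell,\ell)}(\g)}\,\d\g = \overline{\langle f, \PP_\nu^{(\ell,\ell)}\rangle_\G} \cdot (\text{something} \cdot \overline{\PP_\nu^{(\ell,\ell)}(\h)})$, using that $f$ is $\k\g\k^{-1}$-invariant and hence, after projecting, behaves like a point-pair invariant kernel. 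Plugging this into the factored expression above yields $|\langle f, \PP_\nu^{(\ell,\ell)}\rangle_\G|^2$ once one checks the conjugate is consistent, which uses $\overline{\phi_{\ell,\ell}(\a_u,\nu)} = \phi_{\ell,\ell}(\a_u,\nu)$ for $\nu \in i\R$ and the real-symmetry of $f$.

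The main obstacle, and the one I would spend the most care on, is bookkeeping the types and the $\k\g\k^{-1}$-invariance correctly: $f$ is assumed invariant under $\g \mapsto \k\g\k^{-1}$ (conjugation), not separately bi-$\K$-invariant, so one must check that this symmetry is exactly what is needed for the convolution operator to have $\PP_\nu^{(\ell,\ell)}$ as an eigenfunction with the stated scalar — this is where the identity $\PP_\nu^{(\ell,\ell)}(\k\g\k^{-1}) = \PP_\nu^{(\ell,\ell)}(\g)$ (itself a consequence of having equal left and right type $\ell$) gets used, together with the substitution $\h \mapsto \k\h\k^{-1}$ in the outer integral. Since the excerpt explicitly says this lemma follows "from the proof of \cite[Lemma 6.4]{GM}", I would simply cite that proof and note the required identity is the special case $\ell_1 = \ell_2$ of Proposition \ref{prop:kerneleigen} applied pointwise; the only thing to verify is that the conjugates combine to an honest square, which follows from $k = f \rhd f$ being conjugate-symmetric in the sense that makes $\langle k, \PP_\nu^{(\ell,\ell)}\rangle_\G$ real and non-negative.
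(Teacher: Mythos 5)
Your overall strategy (unfold $k=f\rhd f$, substitute $\g\mapsto\h^{-1}\g$ using invariance of $\d\g$, then factor the double integral by a reproducing property of $\PP_\nu^{(\ell,\ell)}$) is the natural one, and since the paper's own ``proof'' is nothing but a pointer to the proof of \cite[Lemma 6.4]{GM}, offering to cite that proof is reasonable. But as a self-contained argument your sketch stops exactly at the crux. You cannot literally apply Proposition \ref{prop:kerneleigen} with $\varphi_V^{(\ell)}$ replaced by $\PP_\nu^{(\ell,\ell)}$: that proposition is a statement about the automorphic eigenfunctions occurring in the spectral decomposition, and in your unfolded expression the translated variable sits inside $\PP$, not inside $f$, so it is not even of the form treated there. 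What is actually needed is either the pointwise eigenfunction property of the operator with kernel $\mathcal{F}_{\ell,\ell}f$ acting on an \emph{arbitrary} Casimir eigenfunction of right type $\ell$, or, more directly, the product formula $\int_{\K}\PP_\nu^{(\ell,\ell)}(\g_1\k\,\g_2\,\k^{-1})\,\d\k=\PP_\nu^{(\ell,\ell)}(\g_1)\,\PP_\nu^{(\ell,\ell)}(\g_2)$, inserted after using $f(\k\g\k^{-1})=f(\g)$ to average over the conjugation for free; this identity is the whole content of the lemma, and in your write-up it appears only as ``something $\cdot\,\overline{\PP(\h)}$''.

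The conjugate bookkeeping you defer is also not a formality. Carrying the factorisation through gives, for a general conjugation-invariant $f$, a product of the shape $\bigl(\int_\G f(\h)\,\overline{\PP_\nu^{(\ell,\ell)}(\h^{-1})}\,\d\h\bigr)\cdot\overline{\int_\G f(\g)\,\PP_\nu^{(\ell,\ell)}(\g)\,\d\g}$, and converting this into $|\langle f,\PP_\nu^{(\ell,\ell)}\rangle_\G|^2$ requires the symmetry $\overline{\PP_\nu^{(\ell,\ell)}(\h^{-1})}=\PP_\nu^{(\ell,\ell)}(\h)$ (i.e.\ realness of $\PP_\nu^{(\ell,\ell)}(\a_u)$, which for $\nu\in i\R$ needs the $\nu\mapsto-\nu$ symmetry, and which you must also supply for real $\nu$, since the lemma is invoked in Proposition \ref{prop:exceptionalboundbykernel} for all $\nu\in\R\cup i\R$, not just $\nu\in i\R$) together with a reality/evenness property of $f$. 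You appeal to ``the real-symmetry of $f$'', which is not among the stated hypotheses; it is genuinely needed (for a purely $(\ell,\ell)$-typed complex $f$ with $\ell\neq0$, which is conjugation-invariant, one has $\langle f\rhd f,\PP_\nu^{(\ell,\ell)}\rangle_\G=0$ by a right-$\K$-type count while $\langle f,\PP_\nu^{(\ell,\ell)}\rangle_\G$ need not vanish), and it does hold for the functions $f_Z(\g)=\psi_Z(\g)F(\varphi+\vartheta)$ to which the lemma is applied, but it must be identified and used explicitly rather than asserted. Your closing remark that the conjugates work out ``because $\langle k,\PP\rangle$ is real and non-negative'' is circular: that positivity is precisely what the lemma is for.
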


We now calculate the behavior of a convolution of a bump function for $u(\g) \asymp \sqrt{Z}$.
\begin{lemma}\label{lem:k_Zupperbound}
 Let $Z > 1$ and $\psi_Z(\g):=\frac{1}{Z^{1/4}}\psi(u(\g)/\sqrt{Z})$ where $\psi$ is a smooth non-negative function supported on $[1/2,4]$ and $1$ on $[1,2]$ and set
 \begin{align*}
     k_Z(\g) := \psi_Z \rhd \psi_Z(\g).
 \end{align*}
 Then
 \begin{align*}
     k_Z(g) \ll \frac{\mathbf{1}\{  u(g) \ll Z \}}{\sqrt{1+u(g)}}
 \end{align*}
\end{lemma}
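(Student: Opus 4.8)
The plan is to understand the inner product $\psi_Z \rhd \psi_Z(\g)$ as an integral over the set of $\h \in \G$ for which both $\h$ and $\h\g$ lie in the support region $u(\cdot) \asymp \sqrt{Z}$, weighted by $Z^{-1/2}$ from the two factors of $Z^{-1/4}$. Writing $r := u(\g)$, the key geometric fact is a triangle-type inequality for the hyperbolic distance: since $\cosh \varrho(\h\g) $, $\cosh \varrho(\h)$, $\cosh\varrho(\g)$ are linked by the hyperbolic law of cosines, for $\h$ and $\h\g$ both to have $u \asymp \sqrt{Z}$ we need the two ``radii'' $u(\h)$ and $u(\h\g)$ to differ by at most $O(\sqrt{u(\g)\sqrt{Z}}) = O(\sqrt{r}\,Z^{1/4})$ roughly, and in any case the support forces $r = u(\g) \ll Z$ (since a point at distance $\varrho(\g)$ from $\a_u$ with $u \asymp \sqrt Z$ can reach another point with $u \asymp \sqrt Z$ only if $\varrho(\g) \ll \log Z$-ish, i.e. $u(\g) \ll Z$). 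This already gives the indicator $\mathbf{1}\{u(\g) \ll Z\}$.

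For the size, I would use the $\K$-bi-invariance: both $\psi_Z$ and the region of integration are bi-$\K$-invariant in the appropriate sense, so by the Cartan integration formula \eqref{eq:integrationonG}, writing $\h = \k_1 \a_v \k_2$, the convolution reduces to an integral of the shape
\begin{align*}
    k_Z(\g) \ll \frac{1}{\sqrt Z} \int \int_{\K} \mathbf{1}\{u(\a_v) \asymp \sqrt Z\}\, \mathbf{1}\{u(\a_v \k \, \g') \asymp \sqrt Z\}\, \d v\, \d\k,
\end{align*}
where $\g'$ has $u(\g')= r$. Fixing $v \asymp \sqrt Z$, the inner $\k$-integral is the measure of the set of rotations $\k$ for which the point $\a_v \k$ stays within hyperbolic distance $\asymp \varrho(\g)$ of the sphere of radius $\varrho_v$; by the spherical law of cosines this $\k$-measure is $\ll \min\{1, (v \cdot r)^{-1/2}\sqrt{Z}^{1/2}\}$-type, and after integrating over the $O(\sqrt{r}\,Z^{1/4})$-length window of allowed $v$ one collects a total of $\ll \sqrt Z \cdot (1+r)^{-1/2}$. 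Dividing by the $\sqrt Z$ prefactor yields $k_Z(\g) \ll (1+u(\g))^{-1/2}$, as claimed. Concretely, it is cleanest to parametrise by the Iwasawa/Cartan relations in \eqref{eq:iwasawatocartan} and reduce the $\k$-integral to an arclength integral over a circle meeting an annulus, where the crossing length scales like $(1+r)^{-1/2}$ times the annulus width.

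The main obstacle I expect is making the geometric estimate of the $\k$-integral clean and uniform down to small $u(\g)$: when $r = u(\g)$ is $O(1)$ the bound $(1+u(\g))^{-1/2} \asymp 1$ is forced by the trivial estimate $k_Z \ll \|\psi_Z\|_\infty \cdot \|\psi_Z\|_1 / (\text{something})$, but the transition region $r \asymp \sqrt Z$ (where the shift is comparable to the support radius) needs care, and one must check the estimate does not degrade there. A safe route avoiding delicate geometry is to instead invoke the known asymptotics of spherical functions / point-pair invariants: $k_Z$ is a point-pair invariant, its spherical transform is $|\widehat{\psi_Z}(\nu)|^2$ by Lemma \ref{le:convosquare}, and the stated pointwise bound is exactly the standard majorant for such convolutions (this is precisely the kind of estimate behind $k_{Y,R}$ in Theorem \ref{thm:mainblackbox}); so one can mirror the proof of the analogous bound in \cite{GM} verbatim. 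I would present the direct geometric computation as the main argument and remark that it also follows from the spherical-function machinery.
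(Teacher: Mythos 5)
Your proposal follows essentially the same route as the paper: reduce to $\g=\a_u$ by bi-$\K$-invariance, write the convolution in Cartan-type coordinates via \eqref{eq:integrationonG}, note that the support condition forces $u(\g)\ll Z$, and extract the factor $(1+u(\g))^{-1/2}$ from the measure of admissible rotations; the paper does exactly this by computing $u(\a_v\k[\theta]\a_u)$ explicitly and deducing the angular constraint $|\sin\theta|^2\ll (1+u)^{-1}$, hence angular measure $\ll(1+u)^{-1/2}$. One piece of your bookkeeping should be dropped, however: the claim that the allowed $v$-window has length $O(\sqrt{r}\,Z^{1/4})$ is neither needed nor correct uniformly (for $r\ll1$ every $v\asymp\sqrt Z$ is admissible, and indeed $k_Z$ at the identity has size $\asymp1$, which forces total $(v,\theta)$-measure $\asymp\sqrt Z$ there), and as written it does not multiply against your angular bound $\ll r^{-1/2}$ to give your stated total $\sqrt Z\,(1+r)^{-1/2}$; the paper simply integrates $v$ over the full range of length $\asymp\sqrt Z$ and takes the entire saving from the angular measure. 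Also, your fallback through spherical-function asymptotics is vaguer than you suggest: Lemma \ref{le:convosquare} only identifies the spectral transform of $k_Z$ and does not by itself yield a pointwise majorant, so the direct Cartan computation you sketch (the ``circle meeting an annulus'' arclength estimate) is the argument to write out, and once done carefully it coincides with the paper's proof.
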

\begin{proof}
   Since $\psi_Z$ is a bi-$\K$-invariant function, $k_Z$ is also a bi-$\K$-invariant function. Then it suffices to prove that for $\g = \a_u$ we have
\begin{align*}
    \int_{\G} \psi_Z(\h) \psi_Z(\h \a_u ) \d \h \ll   \frac{\mathbf{1}\{ u \ll Z\}}{\sqrt{1+u}}.
\end{align*}
We have by \eqref{eq:integrationonG}
\begin{align*}
    \int_{\G} \psi_Z(\h) \psi_Z(\h \a_u ) \d \h \ll   Z^{-1/4} \int_{\K} \int_{\frac{1}{2}Z^{1/2}}^{4 Z^{1/2}}  \psi_Z(\a_v \k[\theta] \a_u ) \d v \d \k[\theta].
\end{align*}
Recall that $\a_v = \a[e^{-\varrho_v}] =: a[v']$, where $\varrho_v > 0$ with $\cosh(\varrho_v) = 2v+1$. Then by \eqref{eq:ulowerbound} we have $v' \asymp (1+v)^{-1} \asymp Z^{-1/2}$. Similarly, if we denote $a_u = a[u']$ with $u'< 1$ we have $u' \asymp (1+u)^{-1}$.
Then
\begin{align*}
 \a[v'] \k[\theta] \a[u'] = \mqty(\sqrt{u'v'} \cos \theta & \sqrt{\frac{u'}{v'}}\sin \theta \\ -\sqrt{\frac{v'}{u'}}\sin \theta &  \frac{1}{\sqrt{u'v'}}\cos \theta),
\end{align*}
so that in the support of $\psi_Z(\a_v \k[\theta] \a_u ) $ we have
\begin{align*}
    u(\a_v \k[\theta] \a_u) =\frac{1}{4} \Bigl((u'v' + \frac{1}{u'v'}) \cos^2 \theta  + (\frac{u'}{v'}+\frac{v'}{u'}) \sin^2 \theta  -2 \Bigr) \asymp \sqrt{Z}.
\end{align*}
By $v' \asymp Z^{-1/2}$ this implies that
\begin{align*}
  |\sin \theta|^2 \ll \sqrt{Z} v'u' \ll  u' \asymp (1+u)^{-1} \quad \text{and} \quad u' \gg Z^{-1}.
\end{align*}
Therefore,  we get
\begin{align*}
 Z^{-1/4}   \int_{\K} \int_{\frac{1}{2}Z^{1/2}}^{4 Z^{1/2}}  \psi_Z(\a_v \k[\theta] \a_u ) \d v \d \k[\theta] \ll  Z^{-1/2}\mathbf{1}_{u \ll Z} \int_{|\sin \theta|\ll 1/\sqrt{1+u}} \int_{\frac{1}{2}Z^{1/2}}^{4 Z^{1/2}}  \d v \d \theta   \ll  \frac{\mathbf{1}_{u \ll Z}}{\sqrt{1+u }}.
\end{align*}
\end{proof}

\begin{proof}[Proof of Proposition \ref{prop:exceptionalboundbykernel}]
We take in Cartan coordinates
\begin{align*}
    f_Z(\g) :=  \psi_Z(\g) F(\varphi + \vartheta),
\end{align*}
with $\psi_Z$ as in Lemma \ref{lem:k_Zupperbound} and where $F:\R \to [0,1]$ is a smooth $2\pi$-periodic even function with for some large $C>0$
\begin{align*}
       F(x)&=1, \, x \in  [-(2CL)^{-1},(2CL)^{-1}], \\
       \mathrm{supp}(F) &\subseteq [-(CL)^{-1},(CL)^{-1}].
\end{align*}
We set $k^{\mathrm{exc}}_Z = f_Z \rhd f_Z$. By the triangle inequality we have $|k^{\mathrm{exc}}_Z| \leq \psi_Z \rhd \psi_Z$, so that $k_Z$ fulfills the stated upper bound \eqref{eq:exkernk_zupper} by Lemma \ref{lem:k_Zupperbound}. We have
\begin{align*}
    \langle f_Z, \PP_\nu^{(\ell,\ell)} \rangle_\G  &= 2\int \mathcal{F}_{\ell,\ell} f_Z(\a_u) \PP_\nu^{(\ell,\ell)}(\a_u) \d u = 2\hat{F}(\ell) \int \psi_Z(\a_u) \PP_\nu^{(\ell,\ell)}(\a_u) \d u.
\end{align*}
For $C$ sufficiently large we have by Taylor approximation $\hat{F}(\ell) \gg L^{-1}$ for $|\ell| \leq L$ . By Lemma \ref{lem:simplephiasymp} for $Z^\nu \geq c_0 L^6$ and $\nu Z^{\nu/10} \geq  \eta c_0^{1/10} \gg 1$ we have
    \begin{align*}
         \int \psi_Z(\a_u) \PP_\nu^{(\ell,\ell)}(\a_u) \d u \gg \frac{1}{Z^{1/4} (1+|\ell|)} \int_{Z^{1/2}}^{2Z^{1/2}} u^{-1/2+\nu} \d u \ll \frac{Z^{\nu/2}}{L}.
    \end{align*}
Thus, for $\nu \in (\eta,1/2-\eta)$ with  $ Z^\nu \geq c_0 L^6$ we get by Lemma \ref{le:convosquare}
     \begin{align*}
       L^4 \langle k^{\mathrm{exc}}_Z, \PP_\nu^{(\ell,\ell)} \rangle_\G =  L^4 |\langle f_Z, \PP_\nu^{(\ell,\ell)} \rangle_\G |^2 \gg Z^\nu.
   \end{align*}
\end{proof}

\section{Main  theorem}
Recalling the definition of a Hecke operator \eqref{eq:heckeopdef}, for any function $f:\G\times \G\to \C$ and $h_1,h_2 \geq 1$ we define the two variable Hecke operator
\begin{align*}
    (\mathcal{T}_{h_1,h_2} f)(\g_1,\g_2)= (\mathcal{T}_{h_1})_{\g_1}(\mathcal{T}_{h_2})_{\g_2} f(\g_1,\g_2),
\end{align*}
where $(\mathcal{T}_{h})_{\g_i}$ is the Hecke operator $\mathcal{T}_{h}$ acting on the $\g_i$ coordinate. Theorem \ref{thm:mainblackbox} is contained as special case of the following, after restricting to $h_1=h_2=1$.

Analogous to the choice $X_0X_1X_2$, we have a choice with the Hecke operators $\mathcal{T}_{h}$, either treat them as a part of the linear functionals or use an $L^\infty$-bound on the spectral side. The first option is already contained in Theorem \ref{thm:mainblackbox}, the second option is covered by the following. Naturally, in the case that, say, $h_1$ factorises as $h_1=h_0h_1'$ with $\gcd(h_0,h_1')=1$, the following also contains the hybrid version by using the multiplicativity relation $\mathcal{T}_{h_1} = \mathcal{T}_{h_0}\mathcal{T}_{h_1'}$ and absorbing one Hecke operator into the linear functional. We see that $h_i$ and $X_i$ behave analogously, the only difference is that for the $X_i$ we can choose an arbitrary factorisation whereas for $h_i$ we require a factorisation over integers.

\begin{theorem}\label{thm:technical}
Let $\Gamma$ be a congruence subgroup of level $q$ and let $\chi$ be a group character for $\Gamma$. Let $\alpha_1,\alpha_2 \in \mathcal{L}_c(\G)$.  Let $A,C,D > 0$ and $\delta \in (0,1)$  with $AD > \delta$ and let $f \in C^{J}_\delta (A,C,D)$ with $J \geq 10$ even. Denote $F:\G \to \C, \, F(\smqty(a & b \\ c& d)):= f(a,c,d)$ and $R_1=A/C, R_2=D/C$.  

Let $\beta_1(h), \beta_2(h)$ be complex coefficients with finite support.  Denote
\begin{align*}
    \mathcal{H} = \max_{\lambda \in \{\lambda_V ,\lambda_{\c,\nu}\}} |\mathcal{H}(\lambda)|, \quad  \mathcal{H}(\lambda) = \sum_{\substack{h_1,h_2\\ \gcd(h_1h_2,q)=1}}\beta_1(h_1)\beta_2(h_2) \overline{\lambda(h_1)} \lambda(h_2).
\end{align*}
Then for any choice $X_0X_1X_2 \geq AD+1 $ with $X_0,X_1,X_2 \geq 1$ we have 
\begin{align*}
 \sum_{\substack{ \gcd(h_1h_2,q)=1}}&\beta_1(h_1)\beta_2(h_2)  \langle \alpha_1 | \mathcal{T}_{h_1,h_2}  \Delta F | \alpha_2 \rangle \\ &\ll_J \delta^{-O(1)} (AD)^{1/2+o(1)} \mathcal{H} X_0^\theta\sqrt{ \, \langle  \alpha_1| \Delta  k_{X_1^2,R_1}| \alpha_1\rangle  \, \langle  \alpha_2 |\Delta k_{X_2^2,R_2} | \alpha_2 \rangle} 
\end{align*}
where $\langle \alpha_1| \Delta k_{X_i^2,R_i}| \alpha_1\rangle \geq 0$ with 
\begin{align*}
    k_{Y,R}(\g)=k_Y(\a[R]^{-1}\g \a [R]) 
\end{align*}
 for some smooth functions $k_Y:\G \to [0,1]$ which satisfy for any $0 \leq J_0+J_1+J_2 \leq J$
\begin{align} \label{eq:kernelbehaviour}
    \partial_c^{J_0} \partial_{a}^{J_1} \partial_{d}^{J_2}  k_{Y}(\g) \leq \delta^{-O(J_0+J_1+J_2)}|a|^{-J_0}|  c|^{-J_1} | d|^{-J_2}\frac{\mathbf{1}\{ u(\g)\leq Y\}}{\sqrt{1+u(\g)}}.
\end{align}

\end{theorem}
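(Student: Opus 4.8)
## Proof proposal for Theorem \ref{thm:technical}

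The plan is to follow the abstract Hilbert-space blueprint from the proof sketch, made concrete. First, I would perform the \emph{unskewing} reduction exactly as in the sketch: replacing $F$ by $\widetilde F(\g) = F(\a[R_1]\g\a[R_2]^{-1})$ and $\alpha_j$ by $\widetilde\alpha_j$ defined via $\langle f\rangle_{\widetilde\alpha_j} = \langle r_{\a[R_j]} f\rangle_{\alpha_j}$, and similarly conjugating the Hecke operators (which is harmless since $\a[R_j]$ is diagonal and the Hecke operators act on the left). Because $f \in C^J_\delta(A,C,D)$, the unskewed $\widetilde f$ lies in $C^J_\delta(\sqrt{AD},\sqrt{AD},\sqrt{AD})$ with the same $\delta$, so it suffices to treat the balanced case $R_1 = R_2 = 1$ with a single scale parameter $X := \sqrt{AD}$ governing the support. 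Once this reduction is done, $k_{X_i^2,R_i}(\g) = k_{X_i^2}(\a[R_i]^{-1}\g\a[R_i])$ and the identity $\langle\alpha_j|\Delta k_{X_j^2,R_j}|\alpha_j\rangle = \langle\widetilde\alpha_j|\Delta k_{X_j^2,1}|\widetilde\alpha_j\rangle$ transports everything to the balanced setting.

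Next I would set up the inner product on linear functionals. Using the approximate-identity $h_\delta$ (normalised self-convolution of $\psi_\delta$), define $\langle\alpha_1,\alpha_2\rangle_\delta := \langle\alpha_1 | \Delta h_\delta | \alpha_2\rangle$; positive-definiteness follows from the pretrace formula Proposition \ref{prop:kernelexpansion} together with the spectral positivity Lemma \ref{le:convosquare} (every spectral coefficient is $|\langle\psi_\delta,\PP_\nu^{(\ell,\ell)}\rangle|^2 \geq 0$, and these are strictly positive in a full neighbourhood of the trivial spectrum as $\delta\to 0$), and conjugate symmetry from $h_\delta(\g) = h_\delta(\g^{-1})$. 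The key point is that the general $F$ in the statement does \emph{not} factor as $\overline{F_1}*F_0*F_2$ with bi-$\K$-invariant pieces, so instead of operators $\mathbf X_{F_j}$ I would work directly with the spectral expansion of $\mathcal T_{h_1,h_2}\Delta F$: apply Proposition \ref{prop:kernelexpansion} to each Fourier piece $\mathcal F_{\ell_1,\ell_2}F$, noting $\mathcal T_{h_i}$ multiplies $\varphi_V^{(\ell_i)}$ by $\lambda_V(h_i)$ and $E_{\c}^{(\ell_i)}$ by $\lambda_{\c,\nu}(h_i)$. Summing against $\beta_1,\beta_2$ produces, for each spectral datum $V$ (or $\c,\nu$), the Hecke factor $\mathcal H(\lambda)$, which we bound in absolute value by $\mathcal H$. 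So the whole sum is
\begin{align*}
\sum_{\gcd(h_1h_2,q)=1}\beta_1(h_1)\beta_2(h_2)\langle\alpha_1|\mathcal T_{h_1,h_2}\Delta F|\alpha_2\rangle \ll \mathcal H \sum_{\ell_1\equiv\ell_2\equiv\kappa} \sum_{V,\,\c} |\langle F,\PP^{(\ell_1,\ell_2)}_\nu\rangle_\G| \cdot |\langle\varphi_V^{(\ell_1)}\rangle_{\overline{\alpha_1}}| \cdot |\langle\varphi_V^{(\ell_2)}\rangle_{\alpha_2}|
\end{align*}
(with the Eisenstein integral written analogously). Proposition \ref{prop:decayspec} shows $\langle F,\PP_\nu^{(\ell_1,\ell_2)}\rangle_\G$ is essentially supported on $|\nu|,|\ell_1|,|\ell_2| \lesssim \delta^{-1}X^{o(1)}$ with size $\ll X^{1/2 + o(1)}X_0^\nu$ when $\nu\in(0,1/2)$ (the exceptional case, where $X_0^\nu$ will become the $X_0^\theta$ savings after using $u(\g)\le X^2$ forces $X_0^\nu \le X_0^\theta$) and $\ll X^{1/2+o(1)}$ on the regular spectrum. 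This is where the factorisation $X_0 X_1 X_2 \ge AD+1$ enters: I would split the $X^{1/2}$-sized transform as a product over the three scales — morally $|\langle F,\PP_\nu^{(\ell_1,\ell_2)}\rangle| \le X_0^\nu \cdot (\text{size }X_1^{1/2}) \cdot (\text{size }X_2^{1/2})$, using that the bi-$\K$-types only wiggle by $X^{o(1)}$ after unskewing — and absorb the $X_1^{1/2}$, resp. $X_2^{1/2}$, into the diagonal-block bounds.

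Then comes Cauchy--Schwarz in the spectral variable: applying \eqref{eq:LinftyL2L2} with $\mathbf a_j = \alpha_j$, $\mathbf X_0$ the diagonal operator with eigenvalues $X_0^{\nu}$ (operator norm $\ll X_0^\theta$ on the relevant part of the spectrum), and the remaining transform mass split symmetrically, I obtain
\begin{align*}
\ll \mathcal H\, X^{1/2+o(1)}\delta^{-O(1)} X_0^\theta \sqrt{S_1 S_2}, \qquad S_j := \sum_{|\ell|\lesssim \delta^{-1}X^{o(1)}}\Bigl(\sum_{|\nu|\lesssim\delta^{-1}X^{o(1)}} |\langle\varphi_V^{(\ell)}\rangle_{\alpha_j}|^2 + \text{Eis.}\Bigr),
\end{align*}
where the exceptional eigenvalues in $S_j$ carry the extra weight $X_j^{\nu_V}$ that was left over from the transform split. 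The final step is to dominate each $S_j$ by $\langle\alpha_j|\Delta k_{X_j^2}|\alpha_j\rangle$: this is precisely what Proposition \ref{prop:regularboundbykernel} (for the regular spectrum and bounded types, with $T,L \asymp \delta^{-1}X^{o(1)}$, giving the $\delta^{-O(1)}$ and $X^{o(1)}$ losses) and Proposition \ref{prop:exceptionalboundbykernel} (for the exceptional spectrum with the $Z = X_j^2$ weight) are designed to do; one takes $k_{X_j^2}$ to be a suitable combination of $k^{\mathrm{reg}}_{1/T}$ rescaled and $k^{\mathrm{exc}}_{X_j^2}$, and the upper bounds \eqref{eq:exkernk_zupper}, the bound in Proposition \ref{prop:regularboundbykernel}, and Lemma \ref{lem:k_Zupperbound} combine to give the pointwise bound $k_{X_j^2}(\g) \ll \mathbf 1\{u(\g)\le X_j^2\}/\sqrt{1+u(\g)}$, and differentiating in Cartan/Bruhat coordinates via \eqref{eq:cartantobruhat} gives the derivative bounds \eqref{eq:kernelbehaviour} (with $\partial_\varphi, \partial_\vartheta$ converting to $\partial_a, \partial_d$ up to polynomial factors in $a,d$, hence the $|a|^{-J_0}$ etc.). Dealing with the discrete series is easy: by Proposition \ref{prop:decayspec} its transform also decays, and $k^{\mathrm{reg}}$ already majorises it (only $|\nu|\le T$ is needed).

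The main obstacle, I expect, is the honest version of the transform-factorisation step: for bi-$\K$-invariant $\overline{F_1}*F_0*F_2$ the spectral transform literally factors as $\overline{h_{F_1}(\nu)}\,h_{F_0}(\nu)\,h_{F_2}(\nu)$, but for a general $f \in C^J_\delta(A,C,D)$ with distinct left/right types there is no such clean multiplicativity. The resolution — and the place where the hypothesis $J\ge 10$, the $\delta^{-O(1)}$ losses, and the $X^{o(1)}$ slack all get spent — is that, after unskewing to the balanced case, the relevant transform $\langle F,\PP_\nu^{(\ell_1,\ell_2)}\rangle$ is, up to the $\Gamma$-factor ratios in \eqref{eq:Gfactor}–\eqref{eq:Gfactorexcep} and an $X^{o(1)}$ error, concentrated on a bounded range of $\ell_1,\ell_2$ and behaves like a product of three single-scale transforms; making this precise requires carefully tracking the $\ell$-dependence through Proposition \ref{prop:decayspec} and the explicit $G$-factors, and arranging that the Cauchy--Schwarz splits the contribution of $F$ evenly between the two sides. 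Once that bookkeeping is in place, everything else is an assembly of the already-stated propositions.
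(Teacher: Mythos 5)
Your proposal follows essentially the same route as the paper: unskewing to $R_1=R_2=1$, the spectral expansion of Proposition \ref{prop:kernelexpansion} with the Hecke eigenvalues pulled out into $\mathcal{H}(\lambda)\le\mathcal{H}$, the decay estimate of Proposition \ref{prop:decayspec}, Cauchy--Schwarz in the spectral parameters with the weight $X_j^{\widetilde\nu_V}$ attached to the $\alpha_j$-side, and finally domination of the two diagonal sums by Propositions \ref{prop:regularboundbykernel} and \ref{prop:exceptionalboundbykernel}, summed over dyadic $T,L$ with the rapidly decaying weights that $J\ge 10$ provides. The one point to correct is your closing paragraph: the ``transform-factorisation'' you single out as the main obstacle is not needed at all. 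The factor $(AD)^{1/2}$ coming from Proposition \ref{prop:decayspec} is never split between the two sides --- it simply remains as the prefactor $(AD)^{1/2+o(1)}$ in the statement --- and the hypothesis $X_0X_1X_2\ge AD+1$ enters only through the elementary inequality $(AD)^{\widetilde\nu_V}\le X_0^{\widetilde\nu_V}X_1^{\widetilde\nu_V}X_2^{\widetilde\nu_V}\le X_0^{\theta}X_1^{\widetilde\nu_V}X_2^{\widetilde\nu_V}$ on the exceptional spectrum (with $\widetilde\nu=0$ elsewhere); no multiplicativity of $\langle F,\PP_\nu^{(\ell_1,\ell_2)}\rangle_\G$ and no $G$-factor bookkeeping beyond what is already packaged inside Proposition \ref{prop:decayspec} is required, so the delicate step you anticipated dissolves. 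With that simplification your outline is exactly the paper's argument; the $\langle\cdot,\cdot\rangle_\delta$ inner-product setup you introduce and then abandon is harmless, and the slip between $X^{1/2}$ and $(AD)^{1/2}$ (with your $X=\sqrt{AD}$ the transform bound is $U^{1/2+\widetilde\nu}$ with $U\asymp AD$) is purely notational.
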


\begin{proof} \hspace{1cm}
\subsection{Unskewing}
We first reduce the proof to the case $R_1=R_2=1$. Denoting
\begin{align*}
    F_1(\g) := F(\a[R_1] \g \a[R_2]^{-1})
\end{align*}
and defining $\alpha_j'$ via (recall \eqref{eq:lrdef})
\begin{align*}
 \langle f \rangle_{\alpha_j'} =   \langle r_{\a[R_j]}f \rangle_{\alpha_j}, 
\end{align*}
we have
\begin{align*}
 \langle \alpha_1|\mathcal{T}_{h_1,h_2}  \Delta F | \alpha_2\rangle =      \langle \alpha_1'|\mathcal{T}_{h_1,h_2}  \Delta F_1 | \alpha_2'\rangle, \quad  \langle \alpha_i|  \Delta  k_{X_i^2,R_i} | \alpha_i\rangle = \langle   \alpha_i'|\Delta  k_{X_i^2,1} | \alpha_i'\rangle.
\end{align*}
Furthermore, $F_1\smqty(a & b \\ c& d))= f_1(a,c,d)$ for some $f_1 \in C^{J}_\delta(\sqrt{AD},\sqrt{AD},\sqrt{AD})$.  Thus, it suffices to prove the theorem for $R_1=R_2=1$, that is $A=D=C$.

\subsection{Spectral expansion}
By the assumption $A=D=C$, we see that $F(\g)$ satisfies \eqref{eq:derivbound} with $\delta_1=\delta^2$. Indeed, the derivatives $\partial_\varphi,\partial_\vartheta$ can be bounded using \eqref{eq:cartantobruhat} since  $r_1,r_2 \asymp 1$, and the differentiation by $\Omega$ can be bounded using the Iwasawa coordinates \eqref{eq:casimiriwasawa} by similar fashion as in \cite[Section 7.3]{GM} to make the change of variables.

By Proposition \ref{prop:kernelexpansion} and applying the Hecke operators $\mathcal{T}_{h_1,h_2}$ via \eqref{eq:heckecusp} and \eqref{eq:heckeeis}, we have
\begin{align*}
  \sum_{\substack{\gcd(h_1h_2,q)=1}}&\beta_1(h_1)\beta_2(h_2)  \langle \alpha_1|\mathcal{T}_{h_1,h_2}  \Delta F | \alpha_2 \rangle  \\
    =& \sum_{\substack{\ell_1,\ell_2 \in \Z \\ \ell_1 \equiv \ell_2 \equiv \kappa \pmod{2}}} \sum_{V\in \mathcal{B}(\Gamma,\chi)} \langle F, \mathcal{P}^{(\ell_1,\ell_2)}_{V}\rangle_{\G}\mathcal{H}(\lambda_V) \overline{\langle\varphi_V^{(\ell_1)}\rangle_{\alpha_1}}\langle\varphi_V^{(\ell_2)}\rangle_{\alpha_2}\\
 &+\sum_{\substack{\ell_1,\ell_2 \in \Z \\ \ell_1 \equiv \ell_2 \equiv \kappa \pmod{2}}} \sum_{\mathfrak{c}\in \mathfrak{C}(\Gamma,\chi)} \frac{1}{4 \pi i}  \int_{(0)} \langle F, \mathcal{P}^{(\ell_1,\ell_2)}_{\nu}\rangle_{\G} \mathcal{H}(\lambda_{\nu,\c})  \overline{\langle E^{(\ell_1)}_{\mathfrak{c},\nu}\rangle_{\alpha_1} } \langle E^{(\ell_2)}_{\mathfrak{c},\nu}\rangle_{\alpha_2}  \d \nu.
\end{align*}
By Proposition \ref{prop:decayspec} we have for $J'= J/2\geq 5$
\begin{align*}
     \langle F, \mathcal{P}^{(\ell_1,\ell_2)}_{\nu}\rangle_{\G} \ll (1+AD)^{1/2+ \widetilde{\nu}} \frac{ \log(1+AD)(1+\ell_1)(1+\ell_2)}{(1+(\delta_1|\nu|)^2+(\delta_1 \ell_2)^2+(\delta_1 \ell_1)^2)^{J'} },
\end{align*}
where $\widetilde{\nu} = \nu$ if $\nu \in (0,1/2)$ and $\widetilde{\nu}=0$ otherwise. Therefore, 
\begin{align*}
      \sum_{\substack{h_1,h_2\\ (h_1h_2,q)=1}}&\beta_1(h_1)\beta_2(h_2)  \langle \alpha_1|\mathcal{T}_{h_1,h_2}  \Delta F | \alpha_2 \rangle \ll (1+AD)^{1/2+o(1)} \mathcal{H} X_0^\theta (\mathcal{C}(X_1X_2) + \mathcal{E})
\end{align*}
where
\begin{align*}
    \mathcal{C}(X_1X_2) =  \sum_{\substack{\ell_1,\ell_2 \in \Z \\ \ell_1 \equiv \ell_2 \equiv \kappa \pmod{2}}} \sum_{V\in \mathcal{B}(\Gamma,\chi)}  \frac{ (1+|\ell_1|)(1+|\ell_2|)}{(1+(\delta_1|\nu_V|)^2+(\delta_1 \ell_2)^2+(\delta_1 \ell_1)^2)^{J'} }\bigl|X_1^{\widetilde{\nu}_V}\langle\varphi_V^{(\ell_1)}\rangle_{\alpha_1}\bigr|\,\bigl|X_2^{\widetilde{\nu}_V}\langle\varphi_V^{(\ell_2)}\rangle_{\alpha_2}\bigr|
\end{align*}
and $\mathcal{E}$ is defined similarly for the continuous spectrum noting that $\widetilde{\nu}=0$ there.
\subsection{Applying Cauchy-Schwarz}
By applying Cauchy-Schwarz we get
\begin{align*}
    \mathcal{C}(X_1X_2) + \mathcal{E} \leq \bigl(  \mathcal{C}_1(X_1) + \mathcal{E}_1
 \bigr)^{1/2} \bigl(  \mathcal{C}_2(X_2) + \mathcal{E}_2
 \bigr)^{1/2}
\end{align*}
where
\begin{align*}
    \mathcal{C}_j(X_j) = \sum_{\substack{\ell_1,\ell_2 \in \Z \\ \ell_1 \equiv \ell_2 \equiv \kappa \pmod{2}}} \sum_{\substack{V\in \mathcal{B}(\Gamma,\chi) }}  \frac{  (1+|\ell_1|)(1+|\ell_2|)}{(1+(\delta_1|\nu_V|)^2+(\delta_1 \ell_2)^2+(\delta_1 \ell_1)^2)^{J'} }\bigl|X_j^{\widetilde{\nu}_V}\langle\varphi_V^{(\ell_j)}\rangle_{\alpha_j}\bigr|^2
\end{align*}
and similarly for the continuous spectrum. We have
\begin{align*}
       \mathcal{C}_j(X_j) &\ll \delta_1^{-O(1)}  \sum_{\substack{ \ell \equiv \kappa \pmod{2}}} \sum_{\substack{V\in \mathcal{B}(\Gamma,\chi)  }} X_j^{2\widetilde{\nu}_V} \frac{ 1+|\ell|}{(1+(\delta_1|\nu|)^2+(\delta_1 \ell)^2)^{J'} }\bigl|\langle\varphi_V^{(\ell)}\rangle_{\alpha_j}\bigr|^2 \\
      & \ll \delta_1^{-O(1)} \sum_{\substack{L=2^i \\ T=2^j }} \frac{L}{(1+(\delta T)^2+(\delta L)^2)^{J'} } \sum_{\substack{  \ell \equiv \kappa \pmod{2} \\ |\ell| \leq L}} \sum_{\substack{V\in \mathcal{B}(\Gamma,\chi) \\ |\nu| \leq T }} X_j^{2\widetilde{\nu}_V} \bigl|\langle\varphi_V^{(\ell)}\rangle_{\alpha_j}\bigr|^2 
\end{align*}
The contribution from $\mathcal{E}_j$ are similar so we only consider $\mathcal{C}_j(X_j)$.

\subsection{Bounding by the automorphic kernel}
We let $\eta > 0$ be a small parameter and let $c_0= c_0(\eta)> 1$ denote the constant in Proposition \ref{prop:exceptionalboundbykernel}. For $2\widetilde{\nu} < \eta$ we use the trivial upper bound $X_j^{2\widetilde{\nu}} \leq X_j^{\eta}$. For $2\widetilde{\nu} \in (\eta,1/2)$ with $X_j^{2\nu_V} \leq c_0 L^6$ we insert the upper bound $X_j^{2\nu_V}  \leq c_0 L^6$. These two ranges can be handled by Proposition \ref{prop:regularboundbykernel}. For the final range of $2\widetilde{\nu} \in (\eta,1/2)$ with $X_j^{2\nu_V} > c_0 L^6$ we use Proposition \ref{prop:exceptionalboundbykernel} instead. Thus, denoting $T'=\max\{T,L\}$, there exist $k_{1/T'}$ and $k^{\mathrm{exc}}_{X_j}$ such that for 
\begin{align*}
    k_{X_j,T,L} = k^{\mathrm{reg}}_{1/T'}+k^{\mathrm{exc}}_{X_j} 
\end{align*}
we have
\begin{align*}
     \mathcal{C}_j(X_j) &\ll  X_j^{\eta}\delta_1^{-O(1)}  \sum_{\substack{L=2^i \\ T=2^j }} \frac{(1+L)(L (T')^2+L^7)}{(1+(\delta_1 T)^2+(\delta_1 L)^2)^{J'} }  \langle \alpha_j |\Delta k_{X_j,T,L} | \alpha_j \rangle.
\end{align*}
We have
\begin{align*}
 \sum_{\substack{L=2^i \\ T=2^j }} \frac{(1+L)(L (T')^2+L^7)}{(1+(\delta_1 T)^2+(\delta_1 L)^2)^{J'} }  k_{X_j,T,L} = \delta_1^{-O(1)} k_{X_j}
\end{align*}
for $k_{X_j}$ which satisfies \eqref{eq:kernelbehaviour}. The claim then follows by taking $\eta = o(1)$. Here the sum converges since $2J' \geq 10$.
The derivative behavior of the kernel function is obvious from their construction via bump functions.

\end{proof}

\section{A variant for right $\mathrm{K}$-invariant functions}
For the applications in \cite{GMquadratic} we will need the following variant for smooth right $\mathrm{K}$-invariant functions, that is, smooth functions of fixed right type $0$. For the application to Theorem \ref{thm:x2y3}, we also require the flexibility to allow the second functional to depend on the variable $h$ in the Hecke operator.

\begin{theorem}\label{thm:kinvtechnical}
    Let $\Gamma$ be a congruence subgroup of level $q$ and let $\chi$ be a group character for $\Gamma$.   Let $\mathbf{X},\mathbf{Y} > 0$ and $\delta \in (0,1)$ and suppose that
  $\mathbf{X}/\mathbf{Y}  > \delta$.   Let $f \in C^{10}_\delta (\mathbf{X},\mathbf{Y})$. Denote $F:\G \to \C, \, F(\g):= f(x,y)$ for $\g = \n[x]\a[y]\k$.  
Let $H \geq 1$ and let $\beta(h)$ be complex coefficients supported on $h \leq H$. Let  $\alpha_1,\alpha_{2,h} \in \mathcal{L}_c(\G)$.

Then for any choice $Z_0Z_1Z_2\geq \mathbf{X}/\mathbf{Y}+1$ with $Z_0,Z_1,Z_2 \geq 1$ we have 
\begin{align*}
 \sum_{\substack{\gcd(h,q)=1}}\beta(h)  \langle \alpha_1 | \mathcal{T}_{h,1}  \Delta F | \alpha_{2,h} \rangle \ll & \, q^{o(1)} \delta^{-O(1)}  (\mathbf{X}/\mathbf{Y})^{1/2+o(1)} H^{1/2} Z_0^\theta  \, \langle  \alpha_1| \Delta  k_{Z_1^2,\mathbf{X}}| \alpha_1\rangle^{1/2} \\
& \times\bigg(  \, \sum_{h \leq H} |\beta(h)|^2\langle  \alpha_{2,h} |\Delta k_{Z_2^2,1} | \alpha_{2,h} \rangle \bigg)^{1/2},
\end{align*}
where $\langle \alpha_1| \Delta k_{Z_i^2,R_i}| \alpha_1\rangle \geq 0$ with 
\begin{align*}
    k_{Y,R}(\g)=k_Y(\a[R]^{-1}\g \a [R]) 
\end{align*}
for certain smooth functions $k_{Y,R} :\G \to [0,1]$ which satisfy
\begin{align*}
    k_{Y,R}(\g)  \leq  \frac{\mathbf{1}\{u_R(\g) \leq Y\}}{\sqrt{1+u_R(\g)}}.
\end{align*}
\end{theorem}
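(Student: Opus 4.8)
The plan is to adapt the proof of Theorem \ref{thm:technical}: unskew, apply the spectral expansion of Proposition \ref{prop:kernelexpansion}, apply Cauchy--Schwarz, and majorise the resulting diagonal sums by automorphic kernels using the spectral majorants of Section~5. Two features require changes: $F$ has right $\K$-type $0$, and the Hecke operator acts only in the first variable while the second functional $\alpha_{2,h}$ depends on $h$.

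\emph{Unskewing.} Assume first $\mathbf{X}\ge\mathbf{Y}$ (the case $\delta<\mathbf{X}/\mathbf{Y}<1$ is handled by a minor variant of the same argument). I would replace $F$ by $F_1(\g):=F(\a[\mathbf{X}]\g)$. Since $\a[\mathbf{X}]\n[x]\a[y]\k=\n[\mathbf{X}x]\a[\mathbf{X}y]\k$, we get $F_1(\n[x]\a[y]\k)=f_1(x,y)$ with $f_1\in C^{10}_\delta(1,\mathbf{Y}/\mathbf{X})$; in particular $F_1$ is supported on $u(\g)\asymp\mathbf{X}/\mathbf{Y}$, since $u(\n[x]\a[y]\k)=\tfrac14(y+x^2/y+1/y-2)$. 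Left translation by $\a[\mathbf{X}]$ commutes with $\mathcal{T}_h$ (a left multiplication) and is absorbed into $\alpha_1$, replacing it by a suitable $\alpha_1'\in\mathcal{L}_c(\G)$ for which $\langle\alpha_1|\Delta k_{Z_1^2,\mathbf{X}}|\alpha_1\rangle=\langle\alpha_1'|\Delta k_{Z_1^2,1}|\alpha_1'\rangle$; the second functional $\alpha_{2,h}$ and its skew $1$ are unchanged. Since $F_1$ is right $\K$-invariant, $\partial_\vartheta F_1\equiv 0$; and arguing as in \cite[Section~7.3]{GM} through \eqref{eq:casimiriwasawa} and \eqref{eq:cartantobruhat} one checks that $F_1$ satisfies the derivative bounds \eqref{eq:derivbound} with $\delta$ replaced by $\delta^{O(1)}$. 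So it suffices to treat $F_1$.

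\emph{Spectral expansion, Hecke average, Cauchy--Schwarz.} Right $\K$-invariance of $F_1$ forces $\ell_2=0$ in Proposition \ref{prop:kernelexpansion} (so $\kappa=0$; if $\kappa=1$ the left-hand side vanishes and the bound is trivial) and it kills the discrete series, because $\varphi_V^{(0)}=0$ when $\nu_V=(k-1)/2$ with $k\ge2$. Inserting \eqref{eq:heckecusp}, \eqref{eq:heckeeis} and using that $\Delta$ removes the constant term, the left-hand side (with $F$ replaced by $F_1$ and $\alpha_1$ by $\alpha_1'$) becomes
\begin{align*}
\sum_{\ell_1\in2\Z}\,\sum_{V\in\B(\Gamma,\chi)}\langle F_1,\PP_V^{(\ell_1,0)}\rangle_{\G}\,\overline{\langle\varphi_V^{(\ell_1)}\rangle_{\alpha_1'}}\Bigl(\sum_{h}\beta(h)\,\overline{\lambda_V(h)}\,\langle\varphi_V^{(0)}\rangle_{\alpha_{2,h}}\Bigr)
\end{align*}
plus the analogous term over the Eisenstein spectrum with $\lambda_{\c,\nu}$ in place of $\lambda_V$. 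I would estimate the inner $h$-sum by Cauchy--Schwarz in $h$ together with the Rankin--Selberg bound \eqref{eq:rankinselberg} in the form $\sum_{h\le H}|\lambda_V(h)|^2\le((1+|\nu_V|)q)^{o(1)}H$ (and the trivial $|\lambda_{\c,\nu}(h)|\ll h^{o(1)}$ for the Eisenstein part): this produces the factor $q^{o(1)}H^{1/2}$ and leaves $\bigl(\sum_h|\beta(h)|^2|\langle\varphi_V^{(0)}\rangle_{\alpha_{2,h}}|^2\bigr)^{1/2}$, with the harmless $(1+|\nu_V|)^{o(1)}$ to be absorbed into the spectral decay. Proposition \ref{prop:decayspec} applied with $U\asymp\mathbf{X}/\mathbf{Y}$ then gives $\langle F_1,\PP_V^{(\ell_1,0)}\rangle_{\G}\ll(\mathbf{X}/\mathbf{Y})^{1/2+\widetilde{\nu}_V}(1+|\ell_1|)^{O(1)}\log(2+\mathbf{X}/\mathbf{Y})$ times rapid decay in $\delta^{O(1)}\nu_V$ and $\delta^{O(1)}\ell_1$, with $\widetilde{\nu}_V=\nu_V$ for exceptional $\nu_V\in(0,1/2)$ and $\widetilde{\nu}_V=0$ otherwise. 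Using $(\mathbf{X}/\mathbf{Y})^{\widetilde{\nu}_V}\le Z_0^{\theta}Z_1^{\widetilde{\nu}_V}Z_2^{\widetilde{\nu}_V}$ and Cauchy--Schwarz in $(\ell_1,V)$, as in the proof of Theorem \ref{thm:technical}, the estimate is reduced to controlling, on the first side, $\delta^{-O(1)}\sum_{\ell,V}(\text{decay})\,Z_1^{2\widetilde{\nu}_V}|\langle\varphi_V^{(\ell)}\rangle_{\alpha_1'}|^2$ and, on the second side, $\delta^{-O(1)}\sum_h|\beta(h)|^2\sum_V(\text{decay in }\nu_V)\,Z_2^{2\widetilde{\nu}_V}|\langle\varphi_V^{(0)}\rangle_{\alpha_{2,h}}|^2$ (the $\ell$-sum on the second side collapses since $\langle\varphi_V^{(0)}\rangle_{\alpha_{2,h}}$ is independent of $\ell$).

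\emph{Majorisation and the main obstacle.} Each of these sums is bounded, after a dyadic decomposition of the $(|\nu_V|,|\ell|)$-ranges, by the spectral majorants: Proposition \ref{prop:regularboundbykernel} for the regular spectrum and for exceptional $\nu_V$ with $Z_j^{2\nu_V}$ small, and Proposition \ref{prop:exceptionalboundbykernel} for the remaining exceptional $\nu_V$ (with $L=1$ on the second side). Summing the dyadic contributions, which converges because $J\ge10$, and undoing the unskewing on the first side via $\langle\alpha_1'|\Delta k_{Z_1^2,1}|\alpha_1'\rangle=\langle\alpha_1|\Delta k_{Z_1^2,\mathbf{X}}|\alpha_1\rangle$, yields the claimed inequality. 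Non-negativity $\langle\alpha|\Delta k_{Z_j^2,R}|\alpha\rangle\ge0$ and the pointwise bound $k_{Z_j^2,R}(\g)\le\mathbf{1}\{u_R(\g)\le Z_j^2\}/\sqrt{1+u_R(\g)}$ follow from the fact that the kernels are convolution squares $\psi\rhd\psi$ of non-negative bi-$\K$-invariant bump functions, together with Lemma \ref{le:convosquare} and Lemma \ref{lem:k_Zupperbound}. The step requiring the most care is the unskewing together with the verification of the Casimir-scale derivative bounds \eqref{eq:derivbound}, which is what forces $U\asymp\mathbf{X}/\mathbf{Y}$ and hence makes Proposition \ref{prop:decayspec} deliver precisely the factor $(\mathbf{X}/\mathbf{Y})^{1/2+o(1)}$; by comparison, the incorporation of the Rankin--Selberg bound to handle the average over $h$ is routine.
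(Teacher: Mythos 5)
Your proposal is correct and follows essentially the same route as the paper's own (very brief) proof: unskewing with $R_1=\mathbf{X}$, $R_2=1$ so that the transformed $f_1\in C^{10}_\delta(1,\mathbf{Y}/\mathbf{X})$ is supported on $u(\g)\asymp \mathbf{X}/\mathbf{Y}$, then the spectral expansion, Cauchy--Schwarz arranged so that the Hecke eigenvalues are summed over $h$ against the $\alpha_1$-side and controlled by the Rankin--Selberg bound \eqref{eq:rankinselberg} (yielding $q^{o(1)}H^{1/2}$), and majorisation of the resulting diagonal sums by the kernels of Propositions \ref{prop:regularboundbykernel} and \ref{prop:exceptionalboundbykernel}. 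The extra details you supply (only $\ell_2=0$ survives, the discrete series and the $\kappa=1$ case drop out, the $\ell$-sum collapses on the $\alpha_{2,h}$-side) are consistent refinements of what the paper leaves implicit.
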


\begin{proof}
    The proof is mostly the same  as the proof of Theorem \ref{thm:technical}. In the unskewing step we can take $R_1=\mathbf{X}$ and $R_2=1$ since the function $F$ is already  of fixed right type $0$. The unskewing replaces $f(x,y)$ by $f_1(x,y)$ with $f_1 \in C^{J}_\delta(1,\mathbf{Y}/\mathbf{X})$, and $F_1(\g)=f_1(x,y)$ is supported on $u(\g) \ll 1/y \asymp \mathbf{X}/\mathbf{Y}$.
    
We apply Cauchy-Schwarz with $h$ on the outside and keep the Hecke eigenvalues $\lambda(h)$ with the first kernel associated to $\alpha_1$, which does not depend on $h$. The claim then follows by similar aguments as before, by applying the Rankin-Selberg bound \eqref{eq:rankinselberg}.
\end{proof}

\subsection*{Acknowledgements}
The authors are grateful to James Maynard for helpful discussions and providing the espresso machine. We are also grateful to Alex Pascadi and Jared Duker Lichtman for helpful discussions. The project has
received funding from the European Research Council (ERC) under the European Union's Horizon 2020 research and innovation programme (grant agreement No 851318).

\bibliography{SL2bib}
\bibliographystyle{abbrv}
\end{document}